\documentclass[12pt]{article}
\usepackage{amsmath}
\usepackage{latexsym}
\usepackage{amssymb}
%
%
%
\newtheorem{thm}{Theorem}[section]
\newtheorem{la}[thm]{Lemma}
\newtheorem{Defn}[thm]{Definition}
\newtheorem{Exa}[thm]{Example}
\newtheorem{Remark}[thm]{Remark}
\newtheorem{prop}[thm]{Proposition}
\newtheorem{cor}[thm]{Corollary}
\newtheorem{Number}[thm]{\!\!}
\newenvironment{defn}{\begin{Defn}\rm}{\end{Defn}}
\newenvironment{exa}{\begin{Exa}\rm}{\end{Exa}}
\newenvironment{rem}{\begin{Remark}\rm}{\end{Remark}}
\newenvironment{numba}{\begin{Number}\rm}{\end{Number}}
\newenvironment{proof}{{\noindent\bf Proof.}}%
                  {\nopagebreak\hspace*{\fill}$\Box$\medskip\par}
\newcommand{\wb}{\overline}
\newcommand{\ve}{\varepsilon}
\newcommand{\wt}{\widetilde}
\newcommand{\mto}{\mapsto}
\newcommand{\N}{{\mathbb N}}
\newcommand{\R}{{\mathbb R}}
\newcommand{\C}{{\mathbb C}}
\newcommand{\K}{{\mathbb K}}
\newcommand{\bL}{{\mathbb L}}
\newcommand{\bS}{{\mathbb S}}
\newcommand{\cg}{{\mathfrak g}}

\newcommand{\cL}{{\mathcal L}}
\newcommand{\cO}{{\mathcal O}}
\newcommand{\cT}{{\mathcal T}}
\newcommand{\cU}{{\mathcal U}}
\newcommand{\cF}{{\mathcal F}}
\newcommand{\cS}{{\mathcal S}}

\newcommand{\cA}{{\mathcal A}}
\newcommand{\cE}{{\mathcal E}}
\newcommand{\cG}{{\mathcal G}}

\newcommand{\sub}{\subseteq}
\newcommand{\wh}{\widehat}
\DeclareMathOperator{\id}{id}
\DeclareMathOperator{\Supp}{supp}
\DeclareMathOperator{\graph}{graph}
\DeclareMathOperator{\locc}{loc}
\DeclareMathOperator{\rc}{rc}
\newcommand{\loc}{{\locc}}
\newcommand{\dl}{{\displaystyle \lim_{\longrightarrow}\, }}
\newcommand{\defi}{:=}
\DeclareMathOperator{\Fl}{Fl}
\DeclareMathOperator{\im}{im}
\DeclareMathOperator{\glue}{glue}
\DeclareMathOperator{\pr}{pr}
\DeclareMathOperator{\ev}{ev}
\DeclareMathOperator{\Ad}{Ad}
\begin{document}
\begin{center}
{\Large\bf
Mapping groups associated with\vspace{1.4mm} real-valued function spaces and
direct limits\vspace{1.4mm} of Sobolev-Lie groups}\\[7mm]
{\bf Helge Gl\"{o}ckner and Luis T\'{a}rrega}\vspace{3mm}
\end{center}
\begin{abstract}
\hspace*{-5.5mm}Let $M$ be a compact smooth manifold
of dimension~$m$ (without boundary)
and $G$ be a finite-dimensional Lie group, with Lie algebra~$\cg$.
Let $H^{>\frac{m}{2}}(M,G)$ be the group of all mappings
$\gamma\colon M\to G$ which are $H^s$ for some $s>\frac{m}{2}$.
We show that $H^{>\frac{m}{2}}(M,G)$ can be made
a regular Lie group in Milnor's sense,
modelled on the Silva space
$H^{>\frac{m}{2}}(M,\cg):={\dl}_{s>\frac{m}{2}}H^s(M,\cg)$,
such that
\[
H^{>\frac{m}{2}}(M,G)\; =\;\, {\dl}_{s>\frac{m}{2}}H^s(M,G)
\]
as a Lie group (where $H^s(M,G)$ is the Hilbert-Lie group
of all $G$-valued $H^s$-mappings on~$M$).
We also explain how the (known)
Lie group structure on $H^s(M,G)$
can be obtained as a special case of
a general construction of Lie groups $\cF(M,G)$,
whenever function spaces $\cF(U,\R)$
on open subsets $U\sub\R^m$ are given,
subject to simple axioms.
\end{abstract}
{\bf Classification:}
22E65 (primary);
22E67, 
46A13, 
46E35, 
46M40 
\\[2.3mm]
{\bf Key words:} Sobolev space,
Banach space-valued section functor,
mapping group,
direct limit,
pushforward,
superposition operator, Nemytskij operator
\section{Introduction and statement of results}
Lie groups of mappings
from a compact manifold~$M$
to a finite-dimensional Lie group~$G$
form an important class of infinite-dimensional
Lie groups,
as well as variants like gauge groups
of principal $G$-bundles over~$M$.
See \cite{Nee} for more context, as well as
the references at the end of this introduction.\\[2.3mm]
In this article, we describe a general construction principle
for Lie groups of mappings when
real-valued function spaces are given,
satisfying suitable axioms.
We then study ascending unions
of the constructed mapping groups,
in the special case of Sobolev-Lie groups.\\[2.3mm]
For fixed $m\in\N$, consider
a basis $\cU$ of the topology
of~$\R^m$ satisfying suitable
properties (a ``good collection of open sets''
in the sense of Definition~\ref{good-coll}).
Suppose that, for each $U\in\cU$,
an integral complete locally convex space $\cF(U,\R)$
of bounded, continuous real-valued
functions is given.\footnote{We say that
a locally convex space~$F$ is \emph{integral complete}
if the weak integral $\int_0^1 \gamma(t)\,dt$
exists in~$F$ for each continuous map
$\gamma\colon [0,1]\to F$.
See~\cite{Wei}
for a characterization.}
Then an integral complete
locally convex space $\cF(U,E)$
of $E$-valued maps
can be defined in a natural way
for each finite-dimensional real
vector space~$E$ (see~\ref{vector_espace_E}).
If four simple axioms
(PF), (PB), (GL), and (MU)
are satisfied, we say that the family
$(\cF(U,\R))_{U\in\cU}$
is \emph{suitable for Lie theory} (see Definition~\ref{theaxioms}).
For each $E$ as before,
one can then define
a locally convex space $\cF(M,E)$
of $E$-valued functions for each compact
$m$-dimensional smooth manifold~$M$
without boundary, see \ref{the-ini-top}.
We can also define a set
$\cF(M,N)$ of
$N$-valued functions on~$M$,
for each finite-dimensional smooth manifold~$N$
(see~\ref{f-to-N}).
If~$N$ is a Lie group,
we obtain (with terminology as in \ref{defn-bch}):
\begin{prop}\label{fi-prop}
Let $\cU$ be a good collection of open subsets
of~$\R^m$ and
$(\cF(U,\R))_{U\in\cU}$ be a
family of integral complete
locally convex spaces which is suitable
for Lie theory.
Let $M$ be a compact $m$-dimensional smooth manifold
without boundary and $G$ be a finite-dimensional Lie group
over $\K\in\{\R,\C\}$, with Lie algebra~$\cg$.
Then $\cF(M,G)$ can be made a $\K$-analytic
BCH-Lie group whose Lie algebra can be identified
with $\cF(M,\cg)$, such that
\[
\cF(M,\exp_G)\colon \cF(M,\cg)\to \cF(M,G),\quad
\gamma\mto \exp_G\circ\, \gamma
\]
is the exponential function of $\cF(M,G)$.
\end{prop}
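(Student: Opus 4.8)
The plan is to construct a Lie group structure on $\cF(M,G)$ via an atlas built from the exponential map, using the local-chart picture of $G$ around the identity and the functoriality of $\cF$. Let me think about how the pieces fit.

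We have $M$ compact smooth $m$-manifold. $G$ is finite-dim Lie group with Lie algebra $\cg$. We've built $\cF(M,E)$ for finite-dim vector spaces $E$, and $\cF(M,N)$ for finite-dim manifolds $N$. The claim is $\cF(M,G)$ is a BCH-Lie group with Lie algebra $\cF(M,\cg)$ and exponential $\gamma \mapsto \exp_G \circ \gamma$.

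Standard approach for mapping groups: use a chart around identity. Let $\exp_G: \cg \to G$ be the exponential. There's an open $0$-neighborhood $\Omega \subseteq \cg$ on which $\exp_G$ is a diffeomorphism onto an open identity-neighborhood $V \subseteq G$. The multiplication in these coordinates is given by BCH (Baker-Campbell-Hausdorff) series, which converges on a smaller neighborhood. The key is to push forward the group operations to operations on $\cF(M, \cg)$ using the pushforward/superposition operator machinery (axioms PF, PB, GL, MU), and to show these are smooth/analytic.

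Key steps in order:

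1. Identity chart. Pick $\Omega$, $V$ as above with $\phi = \exp_G^{-1}: V \to \Omega$. Then $\cF(M, V) \subseteq \cF(M, G)$ is defined, and the map $\cF(M, \phi): \cF(M,V) \to \cF(M,\Omega) \subseteq \cF(M,\cg)$, $\gamma \mapsto \phi \circ \gamma$, should be a bijection onto an open subset of $\cF(M,\cg)$. This uses the functorial pushforward of $\cF$ applied to $\phi$ and its inverse $\exp_G$. I'd want $\cF(M,\Omega)$ open in $\cF(M,\cg)$ — this should follow from the definition of $\cF(M,\cg)$ (initial topology, glued local pieces) and the axioms.

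2. Group multiplication in the chart. For $\gamma, \eta \in \cF(M,\cg)$ small enough that $\exp_G \circ \gamma, \exp_G \circ \eta, (\exp_G\circ\gamma)(\exp_G\circ\eta)$ all land in $V$, the product is $\phi((\exp_G\gamma)(\exp_G\eta)) = \mu_{\text{BCH}}(\gamma, \eta)$ where $\mu_{\text{BCH}}(x,y) = x + y + \frac12[x,y] + \cdots$ is the BCH series, applied pointwise. So the group operation becomes a superposition/Nemytskij operator induced by the smooth (in fact $\K$-analytic) map $\cg \times \cg \supseteq W \to \cg$ given by BCH. Inversion is $\gamma \mapsto -\gamma$, which is linear hence trivially analytic.

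3. Smoothness/analyticity of operations. This is the crux. I need that the pointwise-composition (superposition) operator $\cF(M,\cg) \times \cF(M,\cg) \to \cF(M,\cg)$ induced by the analytic map $\mu_{\text{BCH}}$ is $\K$-analytic on a suitable $0$-neighborhood. This is exactly what the axioms (PF) (pushforward) and (MU) (presumably multiplication/Nemytskij) are designed to give: that composition with a smooth/analytic map induces a smooth/analytic map on the function spaces. I expect the paper has a lemma (established before this proposition, via the axioms) saying superposition operators $\cF(M,\Omega) \to \cF(M, \cg')$ induced by analytic maps between open subsets of finite-dim spaces are analytic. Granting that, steps 1--3 give a local group that is $\K$-analytic.

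4. Assemble the global Lie group. Use the standard criterion (Milnor/Neeb: a group with an identity-chart whose multiplication and inversion are smooth near $e$, and such that for each $g$ the conjugation/translation is continuous/smooth, extends uniquely to a Lie group structure making all translations diffeomorphisms). Concretely, I'd verify: (i) the chart covers an identity neighborhood; (ii) $\mu$ and $\iota$ are analytic near $e$; (iii) for fixed $g \in \cF(M,G)$, conjugation by $g$ maps a small identity-neighborhood analytically into $\cF(M,\cg)$ — this uses that $\cF(M, I_g)$ (pushforward of the inner automorphism $I_g$ of $G$, applied pointwise, which is itself a superposition by the $M$-dependent map $x \mapsto g(x)\exp(x)g(x)^{-1}$) is analytic. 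Then the abstract group $\cF(M,G)$ acquires a unique $\K$-analytic manifold structure for which left translations are analytic diffeomorphisms and the given chart is a chart at $e$; this is a BCH-Lie group because the multiplication in the chart is literally the BCH series.

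5. Lie algebra and exponential. The tangent space at $e$ in the chart is $\cF(M,\cg)$; the bracket computed from the quadratic term of $\mu_{\text{BCH}}$ is the pointwise bracket, identifying the Lie algebra with $\cF(M,\cg)$. One-parameter subgroups $t \mapsto \exp_G \circ (t\gamma)$ have derivative $\gamma$ at $0$, so the exponential is $\gamma \mapsto \exp_G \circ \gamma = \cF(M, \exp_G)(\gamma)$ as claimed.

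The main obstacle is Step 3 (equivalently the conjugation-smoothness in Step 4): establishing that superposition operators induced by analytic maps are analytic on the $\cF$-spaces. This is genuinely where the axioms (PF), (PB), (GL), (MU) must do their work, and where the Silva/direct-limit or integral-completeness structure matters (analyticity of Nemytskij operators is delicate, often requiring that the relevant function spaces behave well under composition and that the remainder estimates in the Taylor expansion of $\mu_{\text{BCH}}$ transfer to the function-space level). I would expect this to be packaged as a previously-proven lemma about $\cF(M,-)$ sending analytic maps of finite-dim manifolds to analytic maps of the associated function spaces/manifolds; once that functoriality-with-analyticity is in hand, the remaining steps are the standard construction of a Lie group from a local chart, and I would treat them as routine.
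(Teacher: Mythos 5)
Your overall route is the paper's route: an identity chart $\cF(M,\phi)$ built from a local inverse $\phi^{-1}$ of $\exp_G$, analyticity of the chart expression of the group operations via a previously proven superposition lemma (this lemma does exist: it is Corollary~\ref{superpo-cx}, and the paper applies it to the $\K$-analytic map $(x,y)\mto\phi^{-1}(\phi(x)\phi(y)^{-1})$ rather than to a literal BCH series, which amounts to the same thing), then Bourbaki's local description of Lie group structures to globalize, the pointwise bracket for the Lie algebra, and one-parameter subgroups $t\mto\exp_G\circ\,(t\gamma)$ to identify the exponential function. Steps 1, 2, 3, and 5 of your plan are exactly what the paper does in Section~\ref{sec-groups}.

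There is, however, one concrete gap, and it sits precisely at the point you flagged as the main obstacle: the conjugation requirement in step 4. You propose to get analyticity of $I_g$ near the identity from analyticity of the superposition operator induced by the \emph{$M$-dependent} map $(x,\xi)\mto g(x)\exp_G(\xi)g(x)^{-1}$, and you expect this to be covered by the same functoriality-with-analyticity lemma. It is not: Corollary~\ref{superpo-cx} covers only parameter-free analytic maps $\Phi\colon U\to F$, and for $M$-dependent integrands the paper has only a continuity statement (Lemma~\ref{for-Ad}) and a smoothness statement (Proposition~\ref{PF_smooth_Manifold}) -- and the latter requires smooth dependence on $x\in M$, whereas here the dependence is through $g\in\cF(M,G)$, which is merely an $\cF$-function. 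No analytic parameter-dependent superposition theorem is available, and proving one directly would be delicate. The paper closes this gap with a linearization trick your proposal is missing: writing $g(x)\exp_G(\eta(x))g(x)^{-1}=\exp_G\bigl(\Ad_{g(x)}(\eta(x))\bigr)$, the chart expression of $I_g$ is the map $\beta\colon\eta\mto\Ad\circ\,(g,\eta)$, which is \emph{linear} in $\eta$; Lemma~\ref{for-Ad} (applied to the smooth map $\Ad\colon G\times\cg\to\cg$) gives its continuity, and a continuous linear map is automatically $\K$-analytic. So the needed fact is true, but the justification you sketch would fail as stated; the extra idea is to exploit linearity of the adjoint action so that continuity, not analyticity, of the parameter-dependent superposition operator suffices.
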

\begin{exa}
For $m\in\N$ and $s>\frac{m}{2}$,
we can apply Proposition~\ref{fi-prop}
to the Sobolev spaces $\cF(U,\R):=H^s(U,\R)$
on bounded open subsets $U\sub\R^m$
(see Section~\ref{sobolev-suitable}).
We obtain Hilbert-Lie groups $H^s(M,G):=\cF(M,G)$
with properties as described in
the proposition.
\end{exa}
Also the following two examples
can be treated (see \cite{FUR}).
\begin{exa}
For $m\in\N$, $k\in\N_0$, and $\alpha\in\;]0,1]$, 
Proposition~\ref{fi-prop}
can be applied to the Banach spaces $\cF(U,\R):=C^{k,\alpha}(U,\R)$
of $k$ times H\"{o}lder-differentiable functions
on bounded open
subsets $U\sub\R^m$ (see \cite{FUR}).
This yields Banach-Lie groups $C^{k,\alpha}(M,G):=\cF(M,G)$
with properties as described in
the proposition.
\end{exa}
\begin{exa}
For $m=1$ and $p\in [1,\infty]$,
Proposition~\ref{fi-prop}
can be applied to the Banach spaces $\cF(U,\R):=AC_{L^p}(U,\R)$
of absolutely continuous functions
with $L^p$-derivatives
on bounded open intervals $U\not=\emptyset$
in~$\R$
(see \cite{FUR}).
For $M:=\bS$ the unit circle,
this yields Banach-Lie groups $AC_{L^p}(\bS,G):=\cF(\bS,G)$
with properties as described in
the proposition.
\end{exa}
We then study direct limits of
the Hilbert-Lie groups $H^s(M,G)$
as $s\searrow s_0$ for some $s_0\geq m/2$.
Using terminology
as in \ref{defn-bch} and \ref{defn-reg-gp},
we obtain:
\begin{thm}\label{dirlim-1}
Let $m\in\N$, $s_0\geq \frac{m}{2}$
be a real number,
$M$ be a compact, $m$-dimensional smooth
manifold without boundary
and $G$ be a finite-dimensional Lie group
over $\K\in\{\R,\C\}$, with Lie algebra~$\cg$.
Then
\[
H^{>s_0}(M,G):=\bigcup_{s>s_0}H^s(M,G)
\]
can be made a $\K$-analytic BCH-Lie group over~$\K$
whose Lie algebra can be identified
with the locally convex direct limit
\[
H^{>s_0}(M,\cg):={\dl}_{s>s_0}\, H^s(M,\cg),
\]
such that $H^{>s_0}(M,\exp_G)\colon H^{>s_0}(M,\cg)\to H^{>s_0}(M,G)$,
$\gamma\mto \exp_G\circ\, \gamma$
is the exponential function of $H^{>s_0}(M,G)$.
The Lie group $H^{>s_0}(M,G)$ is $L^\infty_{\rc}$-regular and
$C^0$-regular.
Each compact subset
of $H^{>s_0}(M,G)$
is a compact subset of $H^s(M,G)$ for some
$s>s_0$. Moreover,
\[
H^{>s_0}(M,G)={\dl}_{s>s_0}\, H^s(M,G)
\]
holds in each of the categories
of topological spaces, topological groups,
$C^\infty_\bL$-Lie groups for $\bL\in\{\R,\K\}$,
and $C^r_\bL$-manifolds for $r\in \N_0\cup\{\infty\}$.
\end{thm}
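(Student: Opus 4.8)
The plan is to realize $H^{>s_0}(M,G)$ as an ascending union of the Hilbert--Lie groups $H^s(M,G)$ and to glue their exponential charts into a single chart on the Silva space $H^{>s_0}(M,\cg)$. First I would fix a strictly decreasing sequence $t_1>t_2>\cdots$ with $t_n\searrow s_0$ and $t_n>s_0$; since $s\geq t$ gives $H^s\sub H^t$, this produces an ascending sequence of groups $G_n:=H^{t_n}(M,G)$ with $\bigcup_n G_n=H^{>s_0}(M,G)$ (the $t_n$ being cofinal among the $s>s_0$), and an ascending sequence of Lie algebras $\cg_n:=H^{t_n}(M,\cg)$ with union $H^{>s_0}(M,\cg)$. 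By Proposition~\ref{fi-prop} applied with $\cF=H^{t_n}$, each $G_n$ is a $\K$-analytic BCH-Lie group with Lie algebra $\cg_n$ and exponential map $\gamma\mto\exp_G\circ\,\gamma$. Because $M$ is compact, the Rellich--Kondrachov theorem makes each inclusion $\cg_n\hookrightarrow\cg_{n+1}$ a compact operator, so $E:=H^{>s_0}(M,\cg)={\dl}_n\,\cg_n$ is a Silva space; it is complete and, crucially, every bounded (hence every compact) subset of~$E$ lies in, and is bounded in, some step~$\cg_n$. This already yields the fourth assertion at the Lie-algebra level and will be the engine for all subsequent analyticity and regularity arguments.

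Second, I would verify that each inclusion $G_n\hookrightarrow G_{n+1}$ is a morphism of $\K$-analytic Lie groups and a topological embedding: read in the exponential charts it is simply the continuous linear inclusion $\cg_n\hookrightarrow\cg_{n+1}$, hence analytic, and the charts are compatible with the inclusions, giving the embedding property. Next I would show that the Baker--Campbell--Hausdorff series converges to a $\K$-analytic map $\mu$ on a suitable $0$-neighbourhood $\Omega\sub E$: on each step the pulled-back multiplication is given by BCH on a $0$-neighbourhood $\Omega_n\sub\cg_n$, these descriptions are mutually compatible, and to pass to~$E$ one uses that $E$ carries the finest locally convex topology and that compact subsets of $E\times E$ lie in a single step $\cg_n\times\cg_n$, which lets analyticity be checked stepwise and then transported to the Silva space.

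Third, I would install the Lie group structure by declaring the glued map $\Phi:=\bigcup_n(\gamma\mto\exp_G\circ\,\gamma)|_{\Omega_n}$ a chart, identifying a $0$-neighbourhood $\Omega\sub E$ with an identity-neighbourhood in $H^{>s_0}(M,G)$, and transporting the atlas by left translations. Here I would invoke the standard criterion (with terminology as in~\ref{defn-bch}) that a group equipped with such a candidate chart around~$e$, on which inversion and multiplication are $\K$-analytic, carries a unique compatible $\K$-analytic BCH-Lie group structure; the content to check is that $\Phi$ is a homeomorphism onto an open set for the direct limit topology and that the group operations are analytic there. Once this is done, $H^{>s_0}(M,G)={\dl}_n\,G_n$ as a $\K$-analytic Lie group is built into the construction, and the identifications in the categories of topological spaces, topological groups, $C^r_\bL$-manifolds and $C^\infty_\bL$-Lie groups follow because $\Phi$ is a direct limit chart and compact sets sit in a step, so the direct limit topology agrees with the manifold topology in each category; the compact-subset assertion for the group then descends from its Lie-algebra counterpart through~$\Phi$.

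Finally, for regularity (in the sense of~\ref{defn-reg-gp}) I would, given a continuous (resp.\ $L^\infty_{\rc}$) curve $\eta$ in~$E$, use that its image is relatively compact and therefore lands bounded in a single~$\cg_n$; I would then solve the logarithmic-derivative equation inside the regular Hilbert--Lie group $G_n=H^{t_n}(M,G)$ and read the evolution $\eta\mto\gamma$ as a map into $H^{>s_0}(M,G)$. Smoothness of the evolution map into the direct limit again reduces, via the compact-set-in-a-step property, to its known smoothness at each step, yielding $C^0$- and $L^\infty_{\rc}$-regularity. The main obstacle throughout is the passage from stepwise statements to the Silva space: proving that the glued exponential chart is a homeomorphism and that multiplication is genuinely $\K$-analytic, rather than merely continuous or smooth, for the direct limit topology. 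This is exactly where compactness of the Sobolev embeddings is indispensable, since it forces compact subsets of~$E$ into a single Hilbert step and thereby lets analyticity be detected and controlled stepwise.
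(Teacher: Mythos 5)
Your overall architecture---a cofinal sequence $t_n\searrow s_0$, Rellich compactness making $H^{>s_0}(M,\cg)={\dl}\,H^{t_n}(M,\cg)$ a Silva space, gluing the exponential charts of the Hilbert--Lie groups $H^{t_n}(M,G)$ into a strict direct limit chart, and transferring analyticity of the local operations from the steps to the limit---is exactly the paper's route. However, two of your steps have genuine gaps.

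First, the local criterion you invoke for building the Lie group structure (Proposition~18 in \cite[Chapter~III, \S1, no.\,9]{Bou}, as used in the proof of Proposition~\ref{fi-prop}) requires more than $\K$-analyticity of multiplication and inversion on a chart around~$e$: to propagate the chart by left translations into a consistent analytic atlas, one must also verify that for \emph{every} $\gamma\in H^{>s_0}(M,G)$ the inner automorphism $I_\gamma$ is $\K$-analytic on some identity neighbourhood. This is a separate step with real content: the paper handles it by showing that $\beta\colon H^{>s_0}(M,\cg)\to H^{>s_0}(M,\cg)$, $\eta\mto\Ad\circ(\gamma,\eta)$ is continuous, because it is the locally convex direct limit of the continuous linear maps $\beta_i$ on the steps $H^{s_i}(M,\cg)$ furnished by Lemma~\ref{for-Ad}. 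Your proposal omits this condition entirely, so the transport of the atlas is not justified as written.

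Second, your regularity argument would fail as stated. Solving the initial value problem inside a single step is fine (compact regularity of the Silva space puts a continuous or $L^\infty_{\rc}$-curve into some $\cg_n$, and Banach--Lie groups are regular), but smoothness of the evolution map cannot be ``reduced, via the compact-set-in-a-step property, to its known smoothness at each step.'' The stepwise-to-limit transfer of smoothness (\cite[Lemma~9.7]{COM}, used in Lemma~\ref{superpo-dl}) applies to maps defined on open subsets of the Silva space itself; the evolution map is instead defined on a space of curves such as $C^0([0,1],H^{>s_0}(M,\cg))$, and the linking maps $C^0([0,1],\cg_n)\to C^0([0,1],\cg_{n+1})$ are \emph{not} compact operators (post-composition with a compact operator is not compact on curve spaces), so this space is not Silva and stepwise smoothness there does not yield smoothness. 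This is precisely why the paper does not argue by hand but quotes \cite[Proposition~8.10]{MEA}, which gives $L^\infty_{\rc}$-regularity (hence $C^0$-regularity) for an ascending union of Banach--Lie groups with a strict direct limit chart and compactly regular modelling space. Two smaller points: your claim that the inclusions $H^{t_n}(M,G)\hookrightarrow H^{t_{n+1}}(M,G)$ are topological embeddings is false---in charts they are injective compact operators between infinite-dimensional spaces, never homeomorphisms onto their images---though nothing essential uses this; and proving convergence of the BCH series on a common $0$-neighbourhood is unnecessary, since once the glued exponential chart is a local $\K$-analytic diffeomorphism the group is a BCH-Lie group by Definition~\ref{defn-bch}.
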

The Lie groups and manifolds we are referring to are
Lie groups and manifolds modelled on locally
convex spaces. The morphisms in the categories
just mentioned are continuous maps,
continuous group homomorphisms,
group homomorphisms which are $C^\infty_\bL$-maps,
and $C^r_\bL$-maps, respectively.\\[3mm]
{\bf General background of the studies.}
Paradigmatic examples of mapping groups
are Lie groups $C^k(M,G)$ of $C^k$-maps
for $k\in\N_0\cup\{\infty\}$,
in particular for $k=\infty$
(see \cite{GCX,KaM,Mck,Mil,PaS}).
Lie groups $H^s(M,G)$
of Sobolev maps with real exponent
$s>m/2$ have also been considered.
See \cite{HaM} for the case of loop groups
(i.e.,
when $M$ is the unit circle $\bS\sub\C$),
using Fourier series for the definition.
For $G$ a compact Lie group
and $(M,g)$ a compact Riemannian manifold,
Lie groups $H^s(M,G)$
are constructed in \cite[Theorem 1.2]{Fre},
referring to \cite[Appendix A]{FaU} for details
where some proofs rely on integer exponents.
A global approach using the Laplace
operator of $(M,g)$
is used there to define Sobolev
spaces.
For real $s>m/2$ and a finite-dimensional
Lie group~$G$,
Sobolev-Lie groups $H^s(M,G)$
also occur in \cite[(7), p.\,395]{Pic}.\\[2.3mm]
Related studies of manifold structures
on $C^k(M,N)$
for a finite-dimensional
smooth manifold~$N$
can be found, e.g., in
\cite{Eel,Ham,KaM}.
Manifold structures on $H^s(M,N)$
for integers $s>\dim(M)/2$
are studied in \cite[p.\,781]{Eel}
and~\cite{IKT};
the possible generalization to real~$s$
is broached in \cite[Appendix~B]{IKT}.\\[2.3mm]
We mention that \cite{Pal}
pursues an axiomatic approach
to global analysis, starting with the choice
of a Banach space-valued section functor
(see \cite[\S4]{Pal}).\footnote{General Sobolev spaces occur
in \cite[\S9]{Pal},
but an essential proof (of \cite[Lemma~9.9]{Pal})
presumes integer exponents.} Starting with function spaces on open subsets
of $\R^m$, as proposed in this article,
constitutes a complementary, more elementary approach.\\[3mm]
{\bf Acknowledgements.}
The first author thanks Rafael Dahmen (now KIT Karlsruhe)
for discussions in the early stages of the project.
The second author acknowledges the support of Universitat Jaume I (P1-1B2015-77 project and E-2016-37 grant).
\section{Preliminaries}
We write $\N=\{1,2,\ldots\}$
and $\N_0=\N\cup\{0\}$.
A map between topological spaces shall be called
a \emph{topological embedding} if it is a homeomorphism
onto its image.
The word ``vector space''
refers to a real vector space,
unless the contrary is stated.
A subset $U$ of a $\K$-vector space~$E$
over $\K\in\{\R,\C\}$
is called \emph{balanced}
if $zx\in U$
for all $x\in U$
and $z\in\K$ with $|z|\leq 1$.
All locally convex
topological vector spaces
are assumed Hausdorff.
If $(E,\|\cdot\|)$
is a normed space,
we let $B^E_r(x):=\{y\in E\colon \|y-x\|<r\}$
be the open ball
of radius $r>0$ around $x\in E$.
We shall use $C^k$-maps
between open subsets
of locally convex spaces
as introduced by Bastiani~\cite{Bas},
and recall some concepts
for the reader's convenience.
For further information, see \cite{RES}
and \cite{GaN} (where also the corresponding manifolds
and Lie groups are discussed),
or also \cite{Ham} (for Fr\'{e}chet
modelling spaces) and \cite{Mil}
(for sequentially complete
spaces). If $U\sub \R^m$
is open and $E$ a finite-dimensional vector space,
we let $C^\infty_c(U,E)$
be the vector space of all compactly
supported smooth functions $\gamma\colon U\to E$.
\begin{numba}
Let $E$ and $F$ be locally convex spaces over
$\K\in\{\R,\C\}$, and $U\sub E$ be open.
A mapping $f\colon U\to F$
is called $C^0_\K$ if it is continuous.
We call $f$ a $C^1_\K$-map if~$f$
is continuous, the directional
derivative
\[
df(x,y):=\lim_{z\to 0}\frac{1}{z}(f(x+zy)-f(x))
\]
exists in~$E$ for all $x\in U$ and $y\in E$
(where $z\in \K\setminus\{0\}$
with $x+zy\in U$), and $df\colon U\times E\to F$
is continuous.
Recursively, for $k\in\N$
we say that $f$ is $C^{k+1}_\K$
if $f$ is $C^1_\K$ and $df$ is $C^k_\K$.
If $f$ is $C^k_\K$
for all $k\in\N$,
then $f$ is called~$C^\infty_\K$.
\end{numba}
The $C^k_\R$-maps are also referred to as $C^k$-maps.
The $C^\infty_\R$-maps are also called
\emph{smooth}.
The $C^\infty_\C$-maps
are also called \emph{complex analytic} (or $\C$-analytic);
they are continuous and given locally
by pointwise convergent series of continuous
complex homogeneous polynomials
(see \cite[Corollary 2.1.9]{GaN},
cf.\ also Proposition 5.5 and Theorem 3.1 in \cite{BaS}), but we shall not use this fact.
For each $C^1_\K$-map $f\colon U\to F$ and each $x\in U$,
%
%
the map
\[
f'(x)\colon E\to F,\quad y\mto df(x,y)
\]
is $\K$-linear.
\begin{numba}
Let $E$ and $F$ be real locally convex spaces
and $U\sub E$ be an open subset.
A function $f\colon U\to F$ is called \emph{real
analytic} (or $\R$-analytic)
if $f$ admits a complex analytic
extension $g\colon W\to F_\C$
to an open subset $W\sub E_\C$
(see \cite[Definition 2.2.2]{GaN},
also \cite{RES} and \cite{Mil}).
\end{numba}
The following fact is useful.
\begin{numba}\label{real-to-c}
Let $E$ and $F$ be complex locally
convex spaces and $U\sub E$
be an open subset. A function
$f\colon U\to F$ is complex analytic
if and only if $f$ is $C^\infty_\R$
and
$f'(x)\colon E\to F$ is $\C$-linear
for each $x\in U$ (see \cite[Corollary 2.1.9]{GaN}, also \cite{RES}).
\end{numba}
\begin{numba}
$C^r_\K$-manifolds modeled on a locally convex topological $\K$-vector space~$E$
for $r\in \N_0\cup\{\infty\}$
and $\K$-analytic manifolds modelled on~$E$
can be defined as expected,
as well as tangent bundles and tangent maps;
likewise $C^\infty_\K$-Lie groups
modelled on~$E$
and $\K$-analytic Lie groups
(see \cite{GaN} and
\cite{Nee}, also \cite{BGN} and \cite{RES}).
When we speak about Lie groups or manifolds,
they may always have infinite dimension
(unless the contrary is stated).
\end{numba}
By definition, a \emph{$\K$-analytic
diffeomorphism} is an
invertible $\K$-analytic
map between $\K$-analytic manifolds
whose inverse is $\K$-analytic.
If $V$ is an open subset of a locally
convex space~$E$, we identify its tangent bundle with
$V\times E$ as usual.
If $M$ is a $C^1$-manifold and $f\colon M\to V$ a $C^1$-map,
we write $df$ for the second component of
the tangent map $Tf\colon TM\to TV=V\times E$.\\[2.3mm]
See \cite{GaN}, \cite{Mil}, and \cite{Nee}
for basic concepts concerning infinite-dimensional
Lie groups (like the Lie algebra $\cg:=L(G):=T_eG$,
the Lie algebra homomorphism
$L(f):=T_e(f)$ associated with a smooth
group homomorphism~$f$
and the notion of an exponential function
$\exp_G\colon \cg\to G$).
See \cite{GaN} for the
next concept
(cf.\ also \cite{GCX,Mil}).
\begin{numba}\label{defn-bch}
A $\K$-analytic Lie group~$G$
is called a \emph{BCH-Lie group}
if it has an exponential function
$\exp_G$
which restricts
to a $\K$-analytic diffeomorphism
from an open zero-neighbourhood
in the Lie algebra~$\cg$ of~$G$
onto an open identity-neighbourhood in~$G$.
\end{numba}
\begin{defn}\label{good-coll}
Let $m\in\N$.
A set $\cU$ of open
subsets
of $\R^m$
will be called a
\emph{good collection of open subsets}
if the following conditions are satisfied:
\begin{itemize}
\item[(a)]
$\cU$ is a basis for the topology of $\R^m$.
\item[(b)]
If $U\in \cU$ and $K\sub U$
is a compact non-empty subset,
then there exists $V\in \cU$
with compact closure $\wb{V}$ in~$\R^m$
such that $K\sub V$ and $\wb{V}\sub U$.
\item[(c)]
If $U\sub\R^m$ is an open set
and $W\in\cU$ is a relatively
compact subset of~$U$,
then there exists $V\in \cU$
such that $V$ is a relatively compact subset of~$U$
and $\wb{W}\sub V$.
\item[(d)]
If $\phi\colon U\to V$
is a $C^\infty$-diffeomorphism
between open subsets $U$ and $V$ of $\R^m$
and $W\in\cU$ is a relatively compact subset of~$U$,
then $\phi(W)\in \cU$.
\end{itemize}
\end{defn}
\begin{exa}\label{exa1}
The following are good
collections
of open subsets of~$\R^m$:
\begin{itemize}
\item[(a)]
The set of all open subsets
of $\R^m$, and the set of all open bounded
subsets;
\item[(b)]
If $m=1$, the set of all relatively compact,
open intervals $I\not=\emptyset$ in~$\R$.
\end{itemize}
\end{exa}
The simple verification is left to the reader.
\begin{rem}\label{rem-smoobd}
In Appendix~\ref{appA}, we show that
also bounded open subsets $U\sub\R^m$
with $C^\infty$-boundary
form a good collection of open sets.
We shall not use this fact here;
but it might be useful for
more complicated potential examples,
like $L^p$-Sobolev spaces for $p\not=2$.
\end{rem}
\section{Axioms for function spaces}\label{sec-genF}
Fix $m\in\N$.
If $U\sub\R^m$
is an open subset, we let $BC(U,\R)$
be the vector space of all bounded continuous
functions $f\colon U\to \R$
and make it a Banach space using the supremum norm~$\|\cdot\|_\infty$.
Let $\cU$ be a good collection of open subsets
of~$\R^m$. For $U\in\cU$,
let a vector subspace $\cF(U,\R)$ of
$BC(U,\R)$ be given;
assume that $\cF(U,\R)$ is equipped with an
integral complete locally convex vector topology making the inclusion $\cF(U,\R)\rightarrow BC(U,\R)$ continuous.\\[2.3mm]
Given $U\in\cU$,
we can then associate an integral complete
locally convex space $\cF(U,E)$
to each finite-dimensional real vector space~$E$:
\begin{numba}\label{vector_espace_E}\label{topo-funs}
If $b_1,\ldots,b_n$ is a basis for~$E$, where $n:=\dim(E)$, we define
\[
\cF(U,E) := \sum^n_{k=1}\cF(U,\R)b_k
\]
and give it the locally vector topology making the map
\begin{equation}\label{an-iso}
\cF(U,\R)^n\rightarrow \cF(U,E),\quad
(f_1,\ldots,f_n)\mapsto \sum^m_{k=1}f_kb_k
\end{equation}
an isomorphism of topological vector spaces.
\end{numba}
Note that
$\cF(U,E)$ and its topology are independent of the choice of basis.
\begin{numba}\label{easiest-prod}
If $E=E_1\oplus E_2$ with vector subspaces $E_1$ and $E_2$,
we can choose a basis $b_1,\ldots,b_k$
for $E_1$ and a basis $b_{k+1},\ldots, b_n$ for $E_2$.
We easily deduce that $\cF(U,E)=\cF(U,E_1)\oplus \cF(U,E_2)$
as a topological vector space.
For all finite-dimensional
vector spaces $F_1$ and $F_2$, we therefore have
\[
\cF(U,F_1\times F_2)\cong \cF(U,F_1)\times \cF(U,F_2).
\]
\end{numba}
If~$W$ is an open subset of~$E$, we
let $\cF(U,W)$ be the set of all
$\gamma\in\cF(U,E)$ such that $\gamma(U)+Q\subseteq W$
for some $0$-neighbourhood $Q\sub E$.
\begin{defn}\label{theaxioms}
We say that $(\cF(U,\R))_{U\in\cU}$ as before
is a \emph{family of locally convex spaces
suitable for Lie theory}
if the following 
axioms are satisfied for all finite-dimensional real vector spaces $E$ and~$F$:
\begin{description}
\item[Pushforward Axiom (PF):]
For all $U,V\in \cU$ such that $V$ is relatively compact in~$U$
and each smooth map $f\colon U\times E\to F$, we have
$f_{\ast}(\gamma) := f\circ (\id_V,\gamma|_V)\in \cF(V,F)$ for all
$\gamma\in \cF(U,E)$ and the map
\[
f_{\ast}\colon \cF(U,E)\to \cF(V,F),\;\;
\gamma\mapsto f_{\ast}(\gamma)
\]
is continuous.
\item[Pullback Axiom (PB):]
Let $U$ be an open subset of~$\R^m$
and $V, W\in\cU$ such that~$W$ has a compact closure
contained in~$U$.
Let $\Theta\colon U\to V$ be a $C^\infty$-diffeomorphism.
Then $\gamma\circ \Theta|_W\in \cF(W,E)$ for all
$\gamma\in \cF(V,E)$ and $\cF(\Theta|_W,E)\colon \cF(V,E)\to \cF(W,E)$,
$\gamma\mapsto \gamma\circ\Theta|_W$
is a continuous map.
\item[Globalization Axiom (GL):]
If $U, V\in \cU$
with
$V\sub U$
and $\gamma\in \cF(V,E)$ has compact support,
then the map $\wt{\gamma}\colon U\rightarrow E$ defined by
$\wt{\gamma}(x)=\gamma(x)$ if $x\in V$ and
$\wt{\gamma}(x)=0$ if $x\in U\setminus \Supp(\gamma)$ is in
$\cF(U,E)$ and for each compact subset~$K$ of~$V$ the map
\[
e^E_{U,V,K}\colon \cF_K(V,E)\to \cF(U,E), \quad
\gamma\mapsto\wt{\gamma}
\]
is continuous, where
$\cF_K(V,E):=\{\gamma\in \cF(V,E)\colon \Supp(\gamma)\subseteq K\}$
is endowed with the topology induced by $\cF(V,E)$.
\item[Multiplication Axiom (MU):]
If $U\in\cU$
and $h\in C^\infty_c(U,\R)$,
then
$h\gamma\in\cF(U,E)$
for all
$\gamma\in \cF(U,E)$
and the map
\[
m_h^E\colon \cF(U,E)\to\cF(U,E),\quad
\gamma\mapsto h\gamma
\]
is continuous.
\end{description}
\end{defn}
\begin{rem}\label{only-scalar}
As the map in (\ref{an-iso})
is an isomorphism of
topological vector spaces,
we see that Axioms (PB), (GL), and (MU)
hold in general whenever they hold
for $E:=\R$.
Likewise,
Axiom (PF) holds in general
whenever it holds for $F:=\R$.
\end{rem}
\begin{rem}\label{auto-cont}
Concerning Axiom (MU), observe that if $\cF(U,E)$ is a Fr\'{e}chet space
and $h\gamma\in\cF(U,E)$
for each $\gamma\in\cF(U,E)$,
then $m_h^E$ is continuous.\\[2mm]
[The multiplication operator $M_h\colon BC(U,E)\to BC(U,E)$,
$\gamma\mapsto h\gamma$ being continuous,
its graph
$\graph(M_h)$
is closed in $BC(U,E)\times BC(U,E)$.
As the inclusion map $\iota\colon\cF(U,E)\to BC(U,E)$
is continuous, we deduce that
$(\iota\times \iota)^{-1}(\graph(M_h))=
\graph(m_h^E)$ is closed in $\cF(U,E)\times\cF(U,E)$.
The continuity of~$m_h^E$
now follows from the Closed Graph Theorem.\,]\\[2.3mm]
Likewise, continuity of the linear map
$\cF(V,E)\to\cF(W,E)$ in (PB)
is automatic if $\cF(V,E)$
and $\cF(W,E)$ are Fr\'{e}chet
spaces,
using that the linear map $BC(V,E)\to BC(W,E)$,
$\gamma\mto\gamma\circ\Theta|_W$
is continuous with operator norm $\leq 1$
(if we endow $E$ with a norm defining its topology
and spaces of bounded continuous
functions to~$E$
with the supremum norm).\\[2.3mm]
Likewise, $e^E_{U,V,K}$ is continuous
in (GL) if $\cF_K(V,E)$
and $\cF(U,E)$ are Fr\'{e}chet spaces.
In fact, endowing $BC_K(V,E):=\{\gamma\in BC(V,E)\colon \Supp(\gamma)\sub K\}$
with the supremum norm,
the map $BC_K(V,E)\to BC(U,E)$, $\gamma\mto \wt{\gamma}$
which extends functions by~$0$
is a linear isometry.
\end{rem}
\section{Basic consequences of the axioms}\label{cons-ax}
Let $m\in\N$,
$\cU$ be a good collection of subsets
of~$\R^m$
and $(\cF(U,\R))_{U\in\cU}$
be a family of locally convex spaces which is suitable
for Lie theory.
We record consequences of the four axioms.
\begin{la}\label{RES}
Let~$E$ be a finite-dimensional real vector space
and $U,W\in\cU$
such that $W$ is relatively compact in~$U$. 
Then $\gamma|_W\in\cF(W,E)$ holds
for each $\gamma\in\cF(U,E)$
and the restriction map
\[
r^E_{W,U}\colon \cF(U,E)\to \cF(W,E), \quad
\gamma\mapsto \gamma|_W
\]
is continuous.
\end{la}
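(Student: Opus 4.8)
The plan is to recognize $r^E_{W,U}$ as a special instance of the pushforward map already controlled by Axiom~(PF). The key observation is that restriction to~$W$ is nothing but composing the ``graph map'' $(\id_W,\gamma|_W)$ with the projection $\pr_2\colon U\times E\to E$, $(x,y)\mapsto y$, which is a globally defined smooth map. Thus the lemma should fall out of (PF) with essentially no extra work.

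Concretely, I would apply (PF) with $V:=W$ (which is relatively compact in~$U$ by hypothesis), with target space $F:=E$, and with $f:=\pr_2\colon U\times E\to E$. For any $\gamma\in\cF(U,E)$ and any $x\in W$ we compute
\[
f_\ast(\gamma)(x)=\bigl(f\circ(\id_W,\gamma|_W)\bigr)(x)=f(x,\gamma(x))=\gamma(x),
\]
so that $f_\ast(\gamma)=\gamma|_W$. Axiom~(PF) then yields both assertions at once: first that $f_\ast(\gamma)=\gamma|_W\in\cF(W,E)$, establishing that the restriction map is well defined into $\cF(W,E)$; and second that the map $f_\ast\colon\cF(U,E)\to\cF(W,E)$ is continuous, which is exactly the continuity of $r^E_{W,U}$.

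I do not anticipate any genuine obstacle here: the entire content of the lemma is already packaged into (PF), and the only things to verify are the harmless pointwise identity $\pr_2\circ(\id_W,\gamma|_W)=\gamma|_W$ and the (evident) smoothness of $\pr_2$. Note also that by Remark~\ref{only-scalar} it would be enough to treat the scalar case $E=\R$, but since (PF) is available for arbitrary finite-dimensional $F$ there is no need to perform any such reduction; the single application above suffices.
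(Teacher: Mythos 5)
Your proof is correct, but it runs through a different axiom than the paper's. The paper disposes of this lemma by applying Axiom~(PB) with $V:=U$ and $\Theta:=\id_U$: restriction to~$W$ is the pullback along the identity diffeomorphism, so the claim is literally an instance of~(PB). You instead invoke Axiom~(PF) with $f:=\pr_2\colon U\times E\to E$, and your verification is sound: $\pr_2$ is smooth, $W\in\cU$ is relatively compact in~$U$ as (PF) requires, and the pointwise identity $f_\ast(\gamma)=\gamma|_W$ is immediate, so (PF) delivers both well-definedness and continuity of $r^E_{W,U}$ in one stroke. (Your argument is in effect the paper's later Lemma~\ref{Consequences_F} specialized to $\Phi=\id_E$.) The practical difference is only in logical dependencies: your route shows the restriction lemma needs nothing beyond~(PF), the paper's that it needs nothing beyond~(PB). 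Since in concrete realizations (PB) tends to be the elementary axiom (diffeomorphism invariance, with continuity often automatic by closed-graph arguments, cf.\ Remark~\ref{auto-cont}) while (PF) encodes the deeper superposition-operator property (for Sobolev spaces it rests on the Nemytskij-operator result, Lemma~\ref{nemytskij}), the paper's derivation is the one that keeps this basic lemma resting on the lighter hypothesis; but within the axiomatic framework, where all four axioms are assumed, the two arguments are equally valid and equally short.
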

\begin{proof}
We can take $V:=U$ and $\Theta:=\id_U$
in Axiom (PB).
\end{proof}
Lemma~\ref{RES}
and Axiom~(GL) imply:
\begin{la}\label{RES-K}
Let~$E$ be a finite-dimensional real vector space
and $U,W\in\cU$
such that $W$ is relatively compact in~$U$. 
Let $K\sub W$ be compact.
Then
\[
r^E_{W,U,K}\colon \cF_K(U,E)\to \cF_K(W,E), \quad
\gamma\mapsto \gamma|_W
\]
is an isomorphism of topological vector spaces. $\square$
\end{la}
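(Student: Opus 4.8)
The plan is to produce a continuous two-sided inverse for $r^E_{W,U,K}$ by extension by zero, which is precisely what Axiom (GL) supplies. First I would check that $r^E_{W,U,K}$ is well defined and continuous. If $\gamma\in\cF_K(U,E)$, then Lemma~\ref{RES} gives $\gamma|_W\in\cF(W,E)$, and since $\Supp(\gamma)\sub K\sub W$ we have $\Supp(\gamma|_W)\sub K$, so $\gamma|_W\in\cF_K(W,E)$. Continuity is inherited from the continuity of $r^E_{W,U}$ in Lemma~\ref{RES}: as $\cF_K(U,E)$ and $\cF_K(W,E)$ carry the subspace topologies induced by $\cF(U,E)$ and $\cF(W,E)$, and a map into a topological subspace is continuous as soon as the corresponding map into the ambient space is, the restriction of $r^E_{W,U}$ corestricts to a continuous map $r^E_{W,U,K}$.

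Next I would invoke Axiom (GL) with $V:=W$. This is legitimate because $W\sub U$ with both $W,U\in\cU$ (relative compactness of $W$ in $U$ gives $W\sub U$), and $K\sub W$ is compact. Axiom (GL) then furnishes the continuous extension-by-zero map $e^E_{U,W,K}\colon\cF_K(W,E)\to\cF(U,E)$. For $\delta\in\cF_K(W,E)$ the extension $\wt{\delta}$ vanishes off $K$, so $\Supp(\wt{\delta})\sub K$ and $\wt{\delta}\in\cF_K(U,E)$; thus $e^E_{U,W,K}$ corestricts to a continuous linear map into $\cF_K(U,E)$.

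Finally I would verify that the two maps are mutually inverse. Restricting a zero-extension $\wt{\delta}$ back to $W$ returns $\delta$, so $r^E_{W,U,K}\circ e^E_{U,W,K}=\id$. Conversely, for $\gamma\in\cF_K(U,E)$ one has $\gamma\equiv 0$ on $U\setminus K\supseteq U\setminus W$, so extending $\gamma|_W$ by zero recovers $\gamma$, giving $e^E_{U,W,K}\circ r^E_{W,U,K}=\id$. Hence $r^E_{W,U,K}$ is an isomorphism of topological vector spaces with continuous inverse $e^E_{U,W,K}$.

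The argument is essentially formal, and I do not expect a genuine obstacle: both the restriction and the extension, together with their continuity, are handed to us ready-made by Lemma~\ref{RES} and Axiom (GL). The only points demanding care are the bookkeeping of supports (to ensure everything stays inside the $\cF_K$-subspaces) and the remark that continuity into and out of these subspaces is inherited from the ambient maps.
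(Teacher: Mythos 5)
Your proof is correct and is exactly the argument the paper intends: the paper gives no details beyond the remark that Lemma~\ref{RES} and Axiom~(GL) imply the result, and your write-up is precisely that argument, with $r^E_{W,U,K}$ obtained by corestricting the restriction map of Lemma~\ref{RES} and its continuous inverse being the extension-by-zero operator $e^E_{U,W,K}$ from Axiom~(GL) applied with $V:=W$. The support bookkeeping and the verification that the two maps are mutually inverse are carried out correctly.
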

Also the following maps are useful.
\begin{la}\label{Consequences_F}
Let $E$ and $F$ be finite-dimensional vector spaces.
Let $U,V\in\cU$ such that $V$ is relatively compact in~$U$,
and $\Phi\colon E\to F$ be a smooth map.
Then $\Phi\circ\gamma|_V\in\cF(V,F)$ holds
for each $\gamma\in\cF(U,E)$
and the mapping $\cF(U,E)\to \cF(V,F)$,
$\gamma\mapsto\Phi\circ\gamma|_V$ is continuous. 
\end{la}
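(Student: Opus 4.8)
The plan is to deduce Lemma~\ref{Consequences_F} from the Pushforward Axiom~(PF) by viewing the map $\Phi\colon E\to F$ as a map on $U\times E$ that ignores its first argument. First I would choose $U,V\in\cU$ with $V$ relatively compact in~$U$, and define
\[
f\colon U\times E\to F,\quad (x,y)\mapsto \Phi(y).
\]
Since $\Phi$ is smooth as a map between finite-dimensional vector spaces and the projection $U\times E\to E$ is smooth, the composite~$f$ is smooth.

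Then I would apply Axiom~(PF) to this~$f$. For each $\gamma\in\cF(U,E)$, the axiom gives
\[
f_\ast(\gamma)=f\circ(\id_V,\gamma|_V)\in\cF(V,F),
\]
and since $f(x,y)=\Phi(y)$, we have $f_\ast(\gamma)(x)=\Phi(\gamma(x))$ for $x\in V$, so $f_\ast(\gamma)=\Phi\circ\gamma|_V$. Thus $\Phi\circ\gamma|_V\in\cF(V,F)$ as claimed. Moreover, (PF) asserts that $f_\ast\colon\cF(U,E)\to\cF(V,F)$ is continuous, and this is exactly the map $\gamma\mapsto\Phi\circ\gamma|_V$, so the desired continuity follows immediately.

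This proof is essentially a direct specialization of~(PF) and involves no real obstacle; the only point requiring a moment's care is confirming that $f(x,y):=\Phi(y)$ is genuinely smooth as a function of the pair $(x,y)\in U\times E$ (not merely in~$y$), which is clear since it factors through the smooth projection onto~$E$. I do not expect to need Axioms (PB), (GL), or (MU), nor the earlier restriction lemmas, since the restriction to~$V$ is already built into the conclusion of~(PF).
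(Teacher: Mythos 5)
Your proposal is correct and coincides with the paper's own proof, which likewise applies Axiom (PF) to the map $f\colon U\times E\to F$, $(x,y)\mapsto\Phi(y)$. The additional remarks on smoothness of~$f$ and the identification $f_\ast(\gamma)=\Phi\circ\gamma|_V$ are just the routine details the paper leaves implicit.
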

\begin{proof}
Axiom (PF) applies to $f\colon U\times E\rightarrow F$,
$(x,y)\mapsto \Phi(y)$.
\end{proof}
\begin{defn}\label{def_F_loc}
Let $E$ be a finite-dimensional real vector space
and $U$ be an open subset of $\R^m$.
We let $\cF_\loc(U,E)$ be the set of
all functions $\gamma\colon U\to E$
with the following property:
For each $V\in\cU$
which is a relatively compact subset of~$U$, the restriction $\gamma|_V$ is in
$\cF(V,E)$. 
\end{defn}
Note that each $\gamma\in \cF_{\loc}(U,E)$
is continuous, and that $\cF_{\loc}(U,E)$
is a vector subspace of $E^U$.
If $U\in\cU$, then $\cF(U,E)\sub \cF_\loc(U,E)$,
by Lemma~\ref{RES}.
We give $\cF_\loc(U,E)$ the initial topology
with respect to the restrictions maps
\[
\rho^E_{V,U}\colon
\cF_\loc(U,E)\to \cF(V,E),\quad \gamma\mto\gamma|_V
\]
for all $V\in \cU$ which are relatively compact in~$U$.
As the restriction maps are linear
and separate points,
$\cF_\loc(U,E)$
is a Hausdorff locally convex space.
\begin{la}\label{loc-restr}
Let $E$ be a finite-dimensional vector space.
If $U$ and $V$ are open subsets of $\R^m$
such that $V\sub U$,
then $\gamma|_V\in\cF_{\loc}(V,E)$
for each $\gamma\in\cF_{\loc}(U,E)$
and the restriction map
\[
\delta^E_{V,U}\colon\cF_{\loc}(U,E)\to\cF_{\loc}(V,E),\quad\gamma\mto\gamma|_V
\]
is continuous and linear.
\end{la}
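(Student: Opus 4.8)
The plan is to exploit the fact that both $\cF_\loc(U,E)$ and $\cF_\loc(V,E)$ carry, by Definition~\ref{def_F_loc}, the initial topology with respect to the restriction maps onto relatively compact members of~$\cU$. This reduces the whole statement to the transitivity of restriction, together with the observation that the hypothesis $V\sub U$ forces every relatively compact subset of~$V$ to be a relatively compact subset of~$U$.

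First I would settle the set-theoretic assertion that $\gamma|_V\in\cF_\loc(V,E)$ for each $\gamma\in\cF_\loc(U,E)$. By Definition~\ref{def_F_loc} this amounts to verifying that $(\gamma|_V)|_W\in\cF(W,E)$ for every $W\in\cU$ that is relatively compact in~$V$. Any such $W$ has $\wb{W}\sub V\sub U$ with $\wb{W}$ compact, so $W$ is relatively compact in~$U$ as well. Since $\gamma\in\cF_\loc(U,E)$ and restriction is transitive, $(\gamma|_V)|_W=\gamma|_W\in\cF(W,E)$, as required. Linearity of $\delta^E_{V,U}$ is immediate, as restriction of functions is a linear operation.

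For continuity I would invoke the universal property of the initial topology on the target: $\delta^E_{V,U}$ is continuous if and only if $\rho^E_{W,V}\circ\delta^E_{V,U}$ is continuous for every $W\in\cU$ that is relatively compact in~$V$. The decisive point is the identity $\rho^E_{W,V}\circ\delta^E_{V,U}=\rho^E_{W,U}$, again a consequence of transitivity of restriction. By the first step each such $W$ is relatively compact in~$U$, so $\rho^E_{W,U}$ is one of the maps defining the initial topology on $\cF_\loc(U,E)$ and is therefore continuous. Hence every composite $\rho^E_{W,V}\circ\delta^E_{V,U}$ is continuous, and the universal property yields continuity of $\delta^E_{V,U}$.

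I do not anticipate a genuine obstacle here: the whole argument is a routine application of the initial-topology formalism. The only point that requires a moment's care is the claim that a relatively compact subset $W\in\cU$ of~$V$ remains relatively compact in~$U$; this is exactly where the hypothesis $V\sub U$ is used, and it rests on $\wb{W}$ being compact and contained in $V\sub U$.
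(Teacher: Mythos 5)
Your proof is correct and takes essentially the same route as the paper: relative compactness of $W\in\cU$ in $V$ transfers to relative compactness in $U$ (giving $\gamma|_W\in\cF(W,E)$ and hence membership), and continuity follows from the universal property of the initial topology on $\cF_{\loc}(V,E)$. Your direct identification $\rho^E_{W,V}\circ\delta^E_{V,U}=\rho^E_{W,U}$ is, if anything, a slightly cleaner way to express the key composite than the paper's factorization $r^E_{W,V}\circ\rho^E_{V,U}$.
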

\begin{proof}
Each $W\in\cU$ which is relatively compact
in~$V$ is also relatively compact in~$U$,
whence $(\gamma|_V)|_W=\gamma|_W\in\cF(W,E)$
by definition of $\cF_{\loc}(U,E)$.
Hence $\gamma|_V\in \cF_{\loc}(V,E)$.
Since $\rho^E_{W,V}\circ \delta^E_{V,U}=r^E_{W,V}\circ \rho^E_{V,U}$
is continuous for each~$W$,
the map $\delta^E_{V,U}$ is continuous.
\end{proof}
\begin{la}\label{PB_loc}
Let $E$ and $F$ be finite-dimensional vector spaces and $U$ be an open subset of $\R^m$.
For each $\Phi\in C^\infty(E,F)$, the
assignment $\gamma\mapsto\Phi\circ\gamma$
defines a continuous map
$\cF_\loc(U,\Phi)\colon
\cF_\loc(U,E)\to\cF_\loc(U,F)$.
\end{la}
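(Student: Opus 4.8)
The plan is to reduce the assertion to Lemma~\ref{Consequences_F}, exploiting the universal property of the initial topology that defines $\cF_\loc(U,F)$. The only subtlety is that Lemma~\ref{Consequences_F} produces elements of, and continuous maps into, $\cF(V,F)$ starting from functions defined on a \emph{strictly larger} member of~$\cU$, whereas the definition of $\cF_\loc$ only supplies us with restrictions to members of~$\cU$ that are relatively compact in~$U$. I would bridge this gap with property~(c) of a good collection.

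First I would fix $V\in\cU$ that is relatively compact in~$U$. By property~(c) of Definition~\ref{good-coll} (applied with $W:=V$), there exists $\wt{V}\in\cU$ which is relatively compact in~$U$ and satisfies $\wb{V}\sub\wt{V}$. Since $\wb{V}$ is compact and contained in~$\wt{V}$, the set $V$ is relatively compact in~$\wt{V}$. Lemma~\ref{Consequences_F} (with $(U,V)$ there replaced by $(\wt{V},V)$) therefore yields that
\[
g_\ast\colon\cF(\wt{V},E)\to\cF(V,F),\quad \eta\mto\Phi\circ\eta|_V
\]
is well defined and continuous. Now, given $\gamma\in\cF_\loc(U,E)$, we have $\gamma|_{\wt{V}}\in\cF(\wt{V},E)$ because $\wt{V}\in\cU$ is relatively compact in~$U$, and hence $(\Phi\circ\gamma)|_V=\Phi\circ\gamma|_V=g_\ast(\gamma|_{\wt{V}})\in\cF(V,F)$. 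As $V$ was an arbitrary member of~$\cU$ relatively compact in~$U$, this shows $\Phi\circ\gamma\in\cF_\loc(U,F)$, so $\cF_\loc(U,\Phi)$ is well defined.

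Finally, for continuity I would invoke the initial topology on $\cF_\loc(U,F)$: it suffices to check that $\rho^F_{V,U}\circ\cF_\loc(U,\Phi)$ is continuous for every $V\in\cU$ relatively compact in~$U$. For such~$V$, the computation above gives the factorization
\[
\rho^F_{V,U}\circ\cF_\loc(U,\Phi)=g_\ast\circ\rho^E_{\wt{V},U},
\]
where $\rho^E_{\wt{V},U}$ is continuous by definition of the initial topology on $\cF_\loc(U,E)$ and $g_\ast$ is continuous by the first step. Thus the left-hand side is continuous, and the universal property of the initial topology yields continuity of $\cF_\loc(U,\Phi)$.

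The main obstacle, and the only place where a genuine idea beyond bookkeeping is needed, is the passage from $V$ to the larger set $\wt{V}$: without it one cannot feed a function into Lemma~\ref{Consequences_F}, since that lemma consumes functions defined on a superset of~$V$. Property~(c) of good collections is exactly tailored to supply such a $\wt{V}$ inside~$\cU$. (Note that no linearity is claimed, which is consistent with $\Phi$ being an arbitrary smooth map.)
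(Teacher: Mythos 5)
Your proof is correct and follows essentially the same route as the paper's: both use Definition~\ref{good-coll}\,(c) to enlarge $V$ to a set $\wt{V}\in\cU$ (called $W$ in the paper) relatively compact in~$U$ with $\wb{V}\sub\wt{V}$, apply Lemma~\ref{Consequences_F} to the pair $(\wt{V},V)$, and conclude via the factorization $\rho^F_{V,U}\circ\cF_\loc(U,\Phi)=g_\ast\circ\rho^E_{\wt{V},U}$ together with the universal property of the initial topology.
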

\begin{proof}
Let $\gamma\in\cF_\loc(U,E)$.
If $V\in \cU$ is relatively compact in~$U$,
we have $\wb{V}\sub W$ for some $W\in\cU$
which is relatively compact in~$U$,
by Definition~\ref{good-coll}\,(c).
Since $\gamma|_W=\rho^E_{W,U}(\gamma)\in\cF(W,E)$,
we have $\Phi\circ\gamma|_V=\Phi\circ(\gamma|_W)|_V\in\cF(V,F)$,
by Lemma~\ref{Consequences_F}.
Thus $\Phi\circ\gamma\in \cF_{\loc}(U,F)$.
For $V$ and $W$ as before,
the map
\[
h_V\colon \cF(W,E)\to \cF(V,F),\quad \eta\mto \Phi\circ \eta|_V
\]
is continuous, by Lemma~\ref{Consequences_F}.
Thus $\rho^F_{V,U}\circ \cF_{\loc}(U,\Phi)=h_V\circ \rho^E_{W,U}$
is continuous.
%
%
The topology on $\cF_{\loc}(U,F)$ being initial
with respect to the maps $\rho^F_{V,U}$,
we deduce that $\cF_{\loc}(U,\Phi)$
is continuous.
\end{proof}
We also need the following variant.
\begin{la}\label{pf-locloc}
Let $E$ and $F$ be finite-dimensional
vector spaces and $U$ be an open subset of $\R^m$.
Let $\Psi\colon Q\to F$ be a smooth function
on an open subset $Q\sub E$.
Then $\Psi\circ \gamma\in\cF_{\loc}(U,F)$
for each $\gamma\in\cF_{\loc}(U,E)$
such that $\gamma(U)\sub Q$.
\end{la}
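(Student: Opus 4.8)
The plan is to verify membership directly from the definition of $\cF_\loc(U,F)$: it suffices to show that $(\Psi\circ\gamma)|_V=\Psi\circ(\gamma|_V)$ lies in $\cF(V,F)$ for each $V\in\cU$ that is relatively compact in~$U$. (In contrast to Lemma~\ref{PB_loc}, no continuity assertion is required here, only membership.) The obstacle compared with Lemma~\ref{PB_loc} is that $\Psi$ is defined only on the open set $Q\sub E$, so neither Axiom~(PF) nor Lemma~\ref{Consequences_F} applies directly; both require a smooth map defined on all of~$E$ in the relevant variable.

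To circumvent this, I would replace $\Psi$, near the compact set $\gamma(\wb{V})$, by a globally defined smooth map. Since $V$ is relatively compact in~$U$, the closure $\wb{V}$ is compact and contained in~$U$; as $\gamma$ is continuous with $\gamma(U)\sub Q$, the image $C:=\gamma(\wb{V})$ is a compact subset of the open set $Q\sub E$. Exploiting that~$E$ is finite-dimensional, hence locally compact, I would nest $C\sub O\sub \wb{O}\sub Q$ with $\wb{O}$ compact and choose a smooth bump function $\chi\colon E\to[0,1]$ which equals~$1$ on the neighbourhood~$O$ of~$C$ and has compact support contained in~$Q$. Then the map $\wt{\Psi}\colon E\to F$ given by $\wt{\Psi}(x):=\chi(x)\Psi(x)$ for $x\in Q$ and $\wt{\Psi}(x):=0$ for $x\notin\Supp(\chi)$ is a well-defined smooth map on all of~$E$ (the two prescriptions agree on the open overlap $Q\setminus\Supp(\chi)$, where $\chi=0$, and these two open sets cover~$E$ since $\Supp(\chi)\sub Q$), and $\wt{\Psi}=\Psi$ on~$O$.

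It then remains to bring the composition into the domain of Lemma~\ref{Consequences_F}. By Definition~\ref{good-coll}\,(c) there is $W\in\cU$ which is relatively compact in~$U$ with $\wb{V}\sub W$; hence $\gamma|_W=\rho^E_{W,U}(\gamma)\in\cF(W,E)$, and $V$ is relatively compact in~$W$ because $\wb{V}\sub W$. Since $\gamma(V)\sub C\sub O$ and $\wt{\Psi}$ agrees with $\Psi$ on~$O$, we obtain $\Psi\circ(\gamma|_V)=\wt{\Psi}\circ(\gamma|_V)=\wt{\Psi}\circ(\gamma|_W)|_V$, and this lies in $\cF(V,F)$ by Lemma~\ref{Consequences_F} applied to the globally smooth map $\wt{\Psi}\colon E\to F$ and to $\gamma|_W\in\cF(W,E)$ with $V$ relatively compact in~$W$. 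As $V$ was an arbitrary relatively compact element of~$\cU$ in~$U$, this shows $\Psi\circ\gamma\in\cF_\loc(U,F)$.

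The main obstacle is precisely the passage from the partially defined~$\Psi$ to a globally defined smooth map, which the bump-function construction resolves using compactness of $\gamma(\wb{V})$ in~$Q$ together with local compactness of the finite-dimensional space~$E$; everything else is a routine reduction to the already-established Lemma~\ref{Consequences_F}.
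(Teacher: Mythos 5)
Your proof is correct and follows essentially the same route as the paper's: both cut off $\Psi$ with a compactly supported smooth bump function equal to~$1$ near the compact image of the closure of the inner set, obtaining a globally smooth map, and then apply Lemma~\ref{Consequences_F} to the nested pair of sets provided by Definition~\ref{good-coll}\,(c). The only differences are cosmetic (your roles of $V$ and $W$ are swapped relative to the paper, and your bump is $1$ on a neighbourhood of $\gamma(\wb{V})$ for the inner set rather than the enlarged one), and your explicit verification that the extension by zero is well defined and smooth is in fact slightly more careful than the paper's wording.
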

\begin{proof}
Let $\gamma\in\cF_{\loc}(U,E)$ with $\gamma(U)\sub Q$.
For each $W\in\cU$ which is relatively compact in~$U$,
there exists $V\in\cU$ such that $\wb{W}\sub V$
and $V$ is relatively
compact in~$U$, by Definition~\ref{good-coll}\,(c).
Then $\gamma|_V\in\cF(V,E)$.
The image $K:=\gamma(\wb{V})$
is a compact subset of~$Q$.
There exists $\xi\in C^\infty_c(Q,\R)$
such that $\xi|_K=1$.
We define $\Phi(y):=\xi(y)\Psi(y)$
for $y\in Q$ and $\Phi(y):=0$
for $y\in E\setminus K$.
Then $\Phi\in C^\infty(E,F)$
and $\Phi(y)=\Psi(y)$ for each $y\in \gamma(V)$,
whence $(\Psi\circ\gamma)|_W=\Psi\circ(\gamma|_W)
=\Phi\circ (\gamma|_W)=(\Phi\circ (\gamma|_V))|_W
\in\cF(W,F)$ by Lemma~\ref{Consequences_F}.
Thus $\Psi\circ\gamma\in \cF(U,F)$.
\end{proof}
\begin{la}\label{right_loc}
Let $E$ be a finite-dimensional vector space, $U$ and~$V$ be
open subsets of $\R^m$ and $\Theta\colon U\rightarrow V$ be a $C^\infty$-diffeomorphism.
Then $\gamma\circ \Theta$\linebreak
$\in \cF_\loc(U,E)$ holds
for all $\gamma\in \cF_\loc(V,E)$.
Moreover, the linear mapping\linebreak
$\cF_{\loc}(\Theta,E)\colon \cF_{\loc}(V,E)\to \cF_{\loc}(U,E)$,
$\gamma\mto\gamma\circ \Theta$ is continuous.
\end{la}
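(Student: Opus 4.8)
The plan is to verify both assertions directly from the definition of $\cF_\loc$, using Axiom (PB) together with parts (c) and (d) of Definition~\ref{good-coll}. Since the topology on $\cF_\loc(U,E)$ is initial with respect to the restriction maps $\rho^E_{W,U}$ for $W\in\cU$ relatively compact in~$U$, it suffices to treat one such~$W$ at a time: I would show both that $(\gamma\circ\Theta)|_W\in\cF(W,E)$ for every $\gamma\in\cF_\loc(V,E)$, and that the assignment $\gamma\mto(\gamma\circ\Theta)|_W$ is continuous.

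Fix $W\in\cU$ relatively compact in~$U$. The first step is to enlarge~$W$: by Definition~\ref{good-coll}\,(c) there is $W'\in\cU$, relatively compact in~$U$, with $\wb{W}\sub W'$. Since $\Theta$ is a $C^\infty$-diffeomorphism and $W'$ is relatively compact in~$U$, Definition~\ref{good-coll}\,(d) gives $\Theta(W')\in\cU$; moreover $\wb{\Theta(W')}\sub\Theta(\wb{W'})$, which is compact and contained in~$V$, so $\Theta(W')$ is relatively compact in~$V$. Consequently $\gamma|_{\Theta(W')}=\rho^E_{\Theta(W'),V}(\gamma)\in\cF(\Theta(W'),E)$ by the definition of $\cF_\loc(V,E)$. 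Now apply Axiom (PB) to the $C^\infty$-diffeomorphism $\Theta|_{W'}\colon W'\to\Theta(W')$, with $\Theta(W')$ in the role of the set ``$V$'' of (PB) and~$W$ in the role of ``$W$'' there (note $\wb{W}\sub W'$, as the compact-closure hypothesis requires): for every $\eta\in\cF(\Theta(W'),E)$ one has $\eta\circ\Theta|_W\in\cF(W,E)$, and the pullback $\cF(\Theta|_W,E)\colon\cF(\Theta(W'),E)\to\cF(W,E)$ is continuous. Taking $\eta:=\gamma|_{\Theta(W')}$ yields $(\gamma\circ\Theta)|_W=(\gamma|_{\Theta(W')})\circ\Theta|_W\in\cF(W,E)$, which proves $\gamma\circ\Theta\in\cF_\loc(U,E)$.

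For continuity, I would observe that the previous step exhibits the factorization
\[
\rho^E_{W,U}\circ\cF_\loc(\Theta,E)=\cF(\Theta|_W,E)\circ\rho^E_{\Theta(W'),V},
\]
whose right-hand side is a composite of two continuous maps (the pullback from (PB) and the restriction map defining the initial topology on $\cF_\loc(V,E)$). Since this holds for every relatively compact $W\in\cU$ and the maps $\rho^E_{W,U}$ define the initial topology on $\cF_\loc(U,E)$, the universal property gives continuity of $\cF_\loc(\Theta,E)$; linearity is immediate. The only delicate point is the bookkeeping in the second paragraph: one must pass from~$W$ to the slightly larger~$W'$ so that $\Theta|_{W'}$ becomes a legitimate diffeomorphism onto a set in~$\cU$ to which $\gamma$ restricts, and so that $\wb{W}\sub W'$ satisfies the compact-closure hypothesis of (PB). Everything else is the routine matching of the data of (PB) with the objects at hand.
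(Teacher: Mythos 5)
Your proof is correct and follows essentially the same route as the paper's: the enlargement $W'$ plays the role of the paper's $P$ (obtained from Definition~\ref{good-coll}\,(c)), $\Theta(W')$ plays the role of $Q$ (in $\cU$ by Definition~\ref{good-coll}\,(d)), and both membership and continuity come from the same application of Axiom (PB) together with the factorization $\rho^E_{W,U}\circ\cF_\loc(\Theta,E)=\cF(\Theta|_W,E)\circ\rho^E_{\Theta(W'),V}$ and the universal property of the initial topology.
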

\begin{proof}
Let $W\in\cU$ be relatively compact in~$U$.
By Definition~\ref{good-coll}\,(c),
there exists $P\in\cU$
such that $\wb{P}$ is compact
and $\wb{W}\sub P\sub\wb{P}\sub U$.
Then $Q:=\Theta(P)$ is a relatively compact
subset of~$V$ and $Q\in\cU$ by Definition~\ref{good-coll}\,(d).
Hence $\gamma|_Q\in\cF(Q,E)$.
By Axiom~(PB), we have
\[
(\gamma\circ \Theta)|_W=(\gamma|_Q\circ \Theta|_P^Q)|_W\in \cF(W,E).
\]
Hence $\gamma\circ\Theta\in \cF_\loc(U,E)$.
By Axiom~(PB),
the map $\rho^E_{W,U}\circ \cF_{\loc}(\Theta,E)=
\cF((\Theta|_P^Q)|_W,E)\circ \rho^E_{Q,V}$
is continuous
for each $W$ (choosing $P$ and $Q$ for~$W$ as before).
Hence $\cF_{\loc}(\Theta,E)$
is continuous.
\end{proof}
\begin{la}\label{CS_loc}
Let $E$ be a finite-dimensional vector space, $U\in\cU$
and $h\in C^{\infty}_c(U,\R)$.
Then $h\gamma\in \mathcal{F}(U,E)$ for all $\gamma\in \mathcal{F}_{\loc}(U,E)$ and the
mapping
$\mu_h^E\colon \mathcal{F}_{\loc}(U,E)\rightarrow \mathcal{F}(U,E)$,
$\gamma\mapsto h\gamma $ is continuous.
\end{la}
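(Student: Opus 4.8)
The plan is to reduce to the already-available Multiplication Axiom (MU), applied on a suitable relatively compact set $V\in\cU$ containing the support of $h$, and then to glue the result back to all of $U$ via the Globalization Axiom (GL).

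First I would set $K:=\Supp(h)$, a compact subset of~$U$. If $h=0$ then $h\gamma=0\in\cF(U,E)$ and $\mu_h^E=0$ is continuous, so I may assume $K\neq\emptyset$. By Definition~\ref{good-coll}(b), I choose $V\in\cU$ with $\wb{V}$ compact, $K\sub V$ and $\wb{V}\sub U$; in particular $V$ is relatively compact in~$U$. Then $h|_V\in C^\infty_c(V,\R)$ (its support is still~$K$, compact and contained in~$V$), and for $\gamma\in\cF_{\loc}(U,E)$ we have $\gamma|_V\in\cF(V,E)$ by the very definition of $\cF_{\loc}(U,E)$. Applying Axiom (MU) on~$V$ to $h|_V$ now gives $(h\gamma)|_V=(h|_V)(\gamma|_V)\in\cF(V,E)$ with support contained in~$K$, so $(h\gamma)|_V\in\cF_K(V,E)$; moreover $m_{h|_V}^E\colon\cF(V,E)\to\cF(V,E)$ is continuous. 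Since $\cF_K(V,E)$ carries the subspace topology and the image of $m_{h|_V}^E$ lies in $\cF_K(V,E)$, I may corestrict $m_{h|_V}^E$ to a continuous map $\cF(V,E)\to\cF_K(V,E)$.

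The key verification is that the zero-extension of $(h\gamma)|_V$ supplied by Axiom (GL) for the pair $V\sub U$ agrees with $h\gamma$ on all of~$U$. On~$V$ the two functions coincide by construction; on $U\setminus V$ we have $x\notin K=\Supp(h)$, so $h(x)=0$ and hence $(h\gamma)(x)=0$, which matches the extension, whose support sits in $K\sub V$. Thus $h\gamma\in\cF(U,E)$, proving the first assertion. For continuity I would factor
\[
\mu_h^E\;=\;e^E_{U,V,K}\circ m_{h|_V}^E\circ\rho^E_{V,U},
\]
where $\rho^E_{V,U}\colon\cF_{\loc}(U,E)\to\cF(V,E)$ is continuous as one of the maps defining the initial topology on $\cF_{\loc}(U,E)$, the middle map is the corestricted multiplication operator from (MU), and $e^E_{U,V,K}\colon\cF_K(V,E)\to\cF(U,E)$ is the extension operator from (GL). Being a composition of continuous maps, $\mu_h^E$ is continuous.

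No step presents a genuine obstacle. The only point demanding care is the choice of~$V$, which must be made so that (MU), the relative compactness needed for $\rho^E_{V,U}$ to be a defining map of $\cF_{\loc}(U,E)$, and the containment $\Supp(h)\sub V$ all hold at once; coupled with this is the elementary but essential observation that $h\gamma$ vanishes off $\Supp(h)$, which is exactly what makes the glued-back function coincide with $h\gamma$ globally.
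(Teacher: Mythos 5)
Your proposal is correct and follows essentially the same route as the paper's proof: choose $V\in\cU$ via Definition~\ref{good-coll}(b) containing $K=\Supp(h)$ and relatively compact in $U$, apply Axiom (MU) to $h|_V$, and glue back with Axiom (GL), obtaining the same factorization $\mu_h^E=e^E_{U,V,K}\circ m_{h|_V}^E\circ\rho^E_{V,U}$. Your extra verifications (the case $h=0$, and that the zero-extension of $(h\gamma)|_V$ really equals $h\gamma$ on $U$) are points the paper leaves implicit, and are welcome.
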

\begin{proof}
By Definition~\ref{good-coll}\,(b),
there exists $V\in\cU$ with
$K:=\Supp(h)\sub V$ such that $V$ is relatively compact in~$U$.
Let $\gamma\in \mathcal{F}_{\loc}(U,E)$.
Then $\gamma|_V\in \mathcal{F}(V,E)$. By Axiom~(MU),
$(h\gamma)|_V=h|_V\gamma|_V
\in\mathcal{F}(V,E)$,
entailing that $(h\gamma)|_V\in\cF_K(V,E)$.
Consequently, $h\gamma=(h|_V\gamma|_V)\hspace*{.3mm}\wt{\;}
\in \mathcal{F}(U,E)$
and $h\gamma=(e^E_{U,V,K}\circ m_{h|_V}^E\circ \rho^E_{V,U})(\gamma)\in\cF(U,E)$
depends continuously on $\gamma\in \cF_{\loc}(U,E)$
by continuity of $\rho^E_{V,U}$,
Axiom~(MU), and Axiom~(GL).
\end{proof}
\begin{la}\label{PartUnit_loc}
Let $E$ be a finite-dimensional vector space, $U_1,\ldots,U_n$
be open subsets of~$\R^m$
and $\gamma_j\in \mathcal{F}_{\loc}(U_j,E)$
for $j\in\{1,\ldots, n\}$
such that
\[
\gamma_j\vert_{U_j\cap U_k}=\gamma_k\vert_{U_j\cap U_k}
\quad
\mbox{for all $j,k\in\{1,\ldots, n\}$.}
\]
If
$V\in\cU$ is relatively compact in
$U_1\cup \ldots\cup U_n$, then $\gamma\in \cF(V,E)$
holds
for the map $\gamma\colon V\rightarrow E$ defined piecewise
via $\gamma(x) := \gamma_j(x)$ for $x\in V\cap U_j$.
\end{la}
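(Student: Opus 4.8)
The map $\gamma$ is well defined because the compatibility hypothesis $\gamma_j|_{U_j\cap U_k}=\gamma_k|_{U_j\cap U_k}$ guarantees that the piecewise definition is independent of the chosen index. The plan is to write $\gamma$ on $V$ as a \emph{finite sum} of functions each of which visibly lies in $\cF(V,E)$, by means of a smooth partition of unity subordinate to the cover $(U_j)$ over the compact set $\overline V$. The one structural obstacle is that the $U_j$ are \emph{arbitrary} open sets, not members of~$\cU$, so none of the multiplication or extension statements (Lemma~\ref{CS_loc}, Axiom (GL)) can be applied to them directly; the whole argument is therefore organised so that every invocation of the axioms takes place on sets belonging to~$\cU$.

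First I would fix a single ambient $\cU$-set. Writing $\Omega:=U_1\cup\cdots\cup U_n$, the set $V\in\cU$ is relatively compact in the open set~$\Omega$, so Definition~\ref{good-coll}(c) provides $Z\in\cU$ that is relatively compact in~$\Omega$ and satisfies $\overline V\sub Z$. Next I cover the compact set $\overline V$: for each $x\in\overline V$ pick $j(x)$ with $x\in U_{j(x)}$; since $U_{j(x)}\cap Z$ is an open neighbourhood of~$x$ and~$\cU$ is a basis of the locally compact space $\R^m$, I obtain $B_x\in\cU$ with $x\in B_x$, with $\overline{B_x}$ compact and $\overline{B_x}\sub U_{j(x)}\cap Z$. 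Finitely many of these, say $B_{x_1},\dots,B_{x_p}$, already cover $\overline V$; set $c(i):=j(x_i)$, so that each $B_{x_i}\in\cU$ is relatively compact in $U_{c(i)}$ and satisfies $\overline{B_{x_i}}\sub Z$. Finally I choose a smooth partition of unity $g_1,\dots,g_p$ with $\Supp(g_i)\sub B_{x_i}$ and $\sum_i g_i=1$ on a neighbourhood of~$\overline V$.

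The assembly then runs as follows. For each~$i$, the set $B_{x_i}\in\cU$ is relatively compact in $U_{c(i)}$, so $\gamma_{c(i)}|_{B_{x_i}}\in\cF(B_{x_i},E)$ by the definition of $\cF_\loc(U_{c(i)},E)$; since $g_i\in C^\infty_c(B_{x_i},\R)$, Lemma~\ref{CS_loc} gives $g_i\cdot\gamma_{c(i)}|_{B_{x_i}}\in\cF(B_{x_i},E)$, a function supported in the compact set $\Supp(g_i)\sub B_{x_i}$. As $B_{x_i}\sub Z$ with both sets in~$\cU$, Axiom (GL) lets me extend it by zero to an element $\psi_i\in\cF(Z,E)$. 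The finite sum $\sum_i\psi_i$ lies in $\cF(Z,E)$, and since $\overline V\sub Z$ makes $V$ relatively compact in~$Z$, Lemma~\ref{RES} yields $\big(\sum_i\psi_i\big)|_V\in\cF(V,E)$. It remains to check that this restriction equals~$\gamma$: for $x\in V$ only the indices~$i$ with $x\in B_{x_i}$ contribute, and for such~$i$ one has $x\in V\cap U_{c(i)}$, whence $\gamma_{c(i)}(x)=\gamma(x)$ by the definition of~$\gamma$; thus $\sum_i\psi_i(x)=\big(\sum_i g_i(x)\big)\gamma(x)=\gamma(x)$, using $\sum_i g_i=1$ near~$\overline V$. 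This proves $\gamma\in\cF(V,E)$. I expect the only delicate point to be the bookkeeping in the second paragraph that forces every support $\Supp(g_i)$ into the single $\cU$-set~$Z$ containing~$\overline V$; once this is arranged, (GL) delivers all the pieces into the common space $\cF(Z,E)$ and the remainder is routine.
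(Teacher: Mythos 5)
Your proposal is correct and follows essentially the same route as the paper's own proof: fix an ambient set $Z\in\cU$ (the paper's $W$) via Definition~\ref{good-coll}\,(c), cover $\wb{V}$ by finitely many $\cU$-sets each relatively compact in some $U_j\cap Z$, multiply by a subordinate partition of unity, extend by zero via Axiom~(GL) into $\cF(Z,E)$, sum, and restrict to~$V$ using Lemma~\ref{RES}. The only cosmetic difference is that you invoke Lemma~\ref{CS_loc} for the multiplication step where the paper applies Axiom~(MU) directly to the restricted functions.
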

\begin{proof}
By Definition~\ref{good-coll}\,(c),
we find $W\in\cU$ which is relatively compact
in $U_1\cup\cdots\cup U_n$
and contains $\wb{V}$.
Since $\cU$ is a basis
for the topology of~$\R^m$,
using the compactness of~$\wb{V}$
we find $W_1,\ldots, W_\ell\in\cU$
with $\wb{V}\sub \bigcup_{i=1}^\ell W_i$
such that, for each $i\in\{1,\ldots,\ell\}$,
the set $W_i$ is relatively compact in $U_{j(i)}\cap W$
for some $j(i)\in\{1,\ldots, n\}$.
Let $h_1,\ldots,h_\ell,h_0$ be a $C^\infty$-partition of unity on
$\R^m$ subordinate to $W_1,\ldots,W_\ell$, $\R^m\setminus\overline{V}$.
For each $i\in\{1,\ldots, \ell\}$,
the support $L_i:=\Supp(h_i)\sub W_i$ of~$h_i$ in~$\R^m$
is compact, as
$W_i$ is relatively compact in~$\R^m$.
Now~$\gamma$ can be written in the form
\[
\gamma=\sum_{i=1}^\ell ((h_i|_{W_i}\gamma_{j(i)}|_{W_i})\hspace*{.3mm}\wt{\;}\hspace*{.5mm})|_V,
\]
where the tilde indicates the extension by $0$
to an element of $\cF(W,E)$.
Thus $\gamma=\sum_{i=1}^\ell (r^E_{V,W}\circ
e^E_{W,W_i,L_i}\circ m^E_{h_i|_{W_i}}\circ \rho^E_{W_i,U_{j(i)}})(\gamma_{j(i)})
\in \cF(V,E)$.
\end{proof}
\begin{rem}\label{glueing-cts}
Let $\cE$ be the vector subspace of $\prod_{j=1}^n\cF_{\loc}(U_j,E)$
given by the $n$-tuples $(\gamma_1,\ldots,\gamma_n)$
such that $\gamma_j|_{U_j\cap U_k}=\gamma_k|_{U_j\cap U_k}$
for all $j,k\in\{1,\ldots, n\}$.
Endow $\cE$ with the topology induced by $\prod_{k=1}^n\cF_{\loc}(U_k,E)$.
The final formula of the preceding proof
shows that the linear map
\[
\glue \colon \cE\to \cF(V,E),\quad (\gamma_1,\ldots,\gamma_n)\mto \gamma
\]
(with $\gamma$ as in Lemma~\ref{PartUnit_loc})
is continuous.
\end{rem}
\section{Associated function spaces on manifolds}
Let $m$, $\cU$ and $(\cF(U,\R))_{U\in\cU}$
be as in the preceding section.
\begin{defn}\label{f-to-N}
Let $M$ be a compact smooth manifold
of dimension~$m$ and~$N$ a smooth manifold of dimension~$n$,
both without boundary.
We let $\mathcal{F}(M,N)$
be the set of all functions
$\gamma\colon M\to N$ with the following property:
For each $x\in M$, there exists a chart
$\phi\colon M\supseteq U_\phi
\rightarrow V_\phi\subseteq \R^m$ of~$M$ with $V_\phi\in\cU$
and a chart $\psi\colon
N\supseteq U_{\psi} \rightarrow V_{\psi}\subseteq \R^n$ of~$N$ such that $x\in U_{\phi}$,
$\gamma(U_{\phi})\subseteq U_{\psi}$, and $
\psi\circ \gamma\circ\phi^{-1} \in \mathcal{F}(V_{\phi},\R^n)$.
\end{defn}
Since $\psi\circ \gamma\circ\phi^{-1}$
is continuous, $\gamma|_{U_\phi}$
is continuous in the preceding situation.
Hence each $\gamma\in \cF(M,N)$ is continuous.
\begin{la}\label{charact_F(M,E)}
Let $M$ be an $m$-dimensional compact smooth
manifold, $N$ be a smooth manifold of dimension~$n$ and $\gamma\colon M\rightarrow N$ be a continuous map.
Then $\gamma\in \mathcal{F}(M,N)$ if and only if $\psi\circ\gamma\circ\phi^{-1}
\in \mathcal{F}_{\loc}(V_{\phi},\R^n)$ for each chart $\phi\colon
M\supseteq U_{\phi} \rightarrow V_{\phi}
\subseteq \R^m$ of $M$ and each chart $\psi\colon N\supseteq U_{\psi}
\rightarrow V_{\psi}\subseteq \R^n$ of~$N$ such that $\gamma(U_{\phi})\subseteq U_{\psi}$.
\end{la}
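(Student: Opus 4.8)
The plan is to prove the two implications separately. Throughout, write condition~(2) for the assertion that $\psi\circ\gamma\circ\phi^{-1}\in\cF_\loc(V_\phi,\R^n)$ for \emph{every} pair of charts $\phi$ of~$M$ and $\psi$ of~$N$ with $\gamma(U_\phi)\sub U_\psi$. The implication ``(2)~$\Rightarrow$~$\gamma\in\cF(M,N)$'' is the routine direction. To prove it, fix $x\in M$, choose a chart $\psi$ of~$N$ around $\gamma(x)$ and, by continuity of~$\gamma$, a chart $\phi_0$ of~$M$ around~$x$ with $\gamma(U_{\phi_0})\sub U_\psi$; then $\psi\circ\gamma\circ\phi_0^{-1}\in\cF_\loc(V_{\phi_0},\R^n)$ by~(2). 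Picking a relatively compact open neighbourhood of $\phi_0(x)$ with closure in~$V_{\phi_0}$ and using that $\cU$ is a basis (Definition~\ref{good-coll}(a)), we find $V\in\cU$ with $\phi_0(x)\in V$ and $\wb V\sub V_{\phi_0}$ compact, so $V$ is relatively compact in~$V_{\phi_0}$. Restricting $\phi_0$ to $\phi_0^{-1}(V)$ yields a chart $\phi$ with $V_\phi=V\in\cU$, $x\in U_\phi$ and $\gamma(U_\phi)\sub U_\psi$, for which $\psi\circ\gamma\circ\phi^{-1}=(\psi\circ\gamma\circ\phi_0^{-1})|_V\in\cF(V,\R^n)$ by the definition of $\cF_\loc$ (Definition~\ref{def_F_loc}). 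As $x$ was arbitrary, $\gamma\in\cF(M,N)$.

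For the converse, assume $\gamma\in\cF(M,N)$ and fix arbitrary charts $\phi,\psi$ with $\gamma(U_\phi)\sub U_\psi$; the goal is $\psi\circ\gamma\circ\phi^{-1}\in\cF_\loc(V_\phi,\R^n)$. For each $y\in M$ the definition of $\cF(M,N)$ supplies charts $\phi_y$ (with $V_{\phi_y}\in\cU$) and $\psi_y$ such that $y\in U_{\phi_y}$, $\gamma(U_{\phi_y})\sub U_{\psi_y}$ and $g_y:=\psi_y\circ\gamma\circ\phi_y^{-1}\in\cF(V_{\phi_y},\R^n)\sub\cF_\loc(V_{\phi_y},\R^n)$ by Lemma~\ref{RES}. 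On the overlap $O_y:=U_\phi\cap U_{\phi_y}$ one has $\gamma(O_y)\sub U_\psi\cap U_{\psi_y}$, and over the open set $\phi(O_y)\sub V_\phi$,
\[
\psi\circ\gamma\circ\phi^{-1}=(\psi\circ\psi_y^{-1})\circ g_y\circ(\phi_y\circ\phi^{-1}).
\]
Restricting $g_y$ to the open subset $\phi_y(O_y)$ keeps it in $\cF_\loc$ (Lemma~\ref{loc-restr}); pulling the result back along the $C^\infty$-diffeomorphism $\phi_y\circ\phi^{-1}\colon\phi(O_y)\to\phi_y(O_y)$ keeps it in $\cF_\loc(\phi(O_y),\R^n)$ (Lemma~\ref{right_loc}); and postcomposing with the smooth map $\psi\circ\psi_y^{-1}$, defined on the open set $\psi_y(U_\psi\cap U_{\psi_y})\sub\R^n$ which contains the image $\psi_y(\gamma(O_y))$, keeps it in $\cF_\loc(\phi(O_y),\R^n)$ (Lemma~\ref{pf-locloc}). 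Thus $(\psi\circ\gamma\circ\phi^{-1})|_{\phi(O_y)}\in\cF_\loc(\phi(O_y),\R^n)$.

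To globalize, I use that the $U_{\phi_y}$ cover the compact manifold~$M$, so finitely many $U_{\phi_{y_1}},\dots,U_{\phi_{y_r}}$ suffice, whence $V_\phi=\bigcup_{i=1}^r\phi(O_{y_i})$. Given any $W\in\cU$ relatively compact in~$V_\phi$, Lemma~\ref{PartUnit_loc} applied to the open sets $\phi(O_{y_i})$ and the (pairwise compatible) restrictions of $\psi\circ\gamma\circ\phi^{-1}$ shows $(\psi\circ\gamma\circ\phi^{-1})|_W\in\cF(W,\R^n)$. Since $W$ was an arbitrary $\cU$-set relatively compact in~$V_\phi$, the definition of $\cF_\loc$ gives $\psi\circ\gamma\circ\phi^{-1}\in\cF_\loc(V_\phi,\R^n)$, as required.

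The main obstacle is the converse direction, specifically the passage from the local data on the covering charts~$\phi_y$ to the single fixed chart~$\phi$: the composite $\psi\circ\gamma\circ\phi^{-1}$ is not directly one of the given good functions but must be reconstructed through the transition maps. This is exactly what the stability of $\cF_\loc$ under restriction, diffeomorphic pullback, and composition with a partially defined smooth map is designed to handle, after which compactness of~$M$ reduces the gluing to the finite situation covered by Lemma~\ref{PartUnit_loc}.
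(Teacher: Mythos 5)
Your proposal is correct and takes essentially the same approach as the paper's proof: the hard direction is handled by the same restriction--pullback--postcomposition chain (Lemmas \ref{loc-restr}, \ref{right_loc}, \ref{pf-locloc}) applied to the transition maps, followed by gluing over the finitely many chart overlaps via Lemma~\ref{PartUnit_loc}, and the easy direction uses the same basis-plus-restriction argument from Definitions \ref{good-coll}\,(a) and \ref{def_F_loc}. The only cosmetic differences are that you take the finite subcover after the local argument rather than before, and that the paper's converse works from the formally weaker hypothesis that suitable charts exist around each point, whereas you invoke the full ``all charts'' hypothesis and construct such a pair by continuity of~$\gamma$.
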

\begin{proof}
If $\gamma\in\cF(M,N)$,
let $\phi\colon U_{\phi}
\rightarrow V_{\phi}\subseteq \R^m$ be a chart of~$M$
and $\psi\colon U_{\psi}\rightarrow V_{\psi}\subseteq \R^n$
be a chart of~$N$ such that $\gamma(U_{\phi})\subseteq U_{\psi}$.
Let $W\in\cU$ be relatively compact
in~$V_{\phi}$. By definition, for each point $x\in M$
there exist a chart $\phi_x\colon
U_x \rightarrow V_x\subseteq \R^m$ of $M$ with $V_x\in\cU$
and a chart
$\psi_x\colon A_x \rightarrow B_x\subseteq \R^n$ of~$N$
such that $x\in U_x$,
$\gamma(U_x)\subseteq A_x$ and $\psi_x\circ\gamma\circ\phi_x^{-1}
\in \mathcal{F}(V_x,\R^n)$.
Since $M =\bigcup_{x\in M}U_x$, there exists a finite subcover $U_{x_1},\ldots,
U_{x_r}$ of~$M$.
We have
\[
g_i\defi \psi_{x_i}\circ\gamma\circ\phi_{x_i}^{-1}
\in \mathcal{F}(V_{x_i},\R^n)\subseteq \mathcal{F}_{\loc}(V_{x_i},\R^n)
\]
for all $i\in \{1,\ldots,r\}$.
Then $g_i|_{\phi_{x_i}(U_\phi\cap U_{x_i})}
\in \mathcal{F}_{\loc}(\phi_{x_i}(U_{\phi}\cap U_{x_i}),\R^n)$,
since $\phi_{x_i}(U_{\phi}\cap U_{x_i})$ is an open subset of~$V_{x_i}$
(see Lemma~\ref{loc-restr}).
For each $i\in\{1,\ldots,r\}$,
consider the $C^\infty$-diffeomorphisms
\[
\Theta_i:= \phi_{x_i}\circ \phi^{-1}
\colon \phi (U_{\phi}\cap U_{x_i})\rightarrow \phi_{x_i}(U_{\phi}\cap U_{x_i})
\]
and $\Phi_i:= \psi\circ \psi^{-1}_{x_i}\colon \psi_{x_i} (U_{\psi}\cap A_{x_i})\rightarrow \psi(U_{\psi}\cap A_{x_i})$.
By Lemma~\ref{right_loc}, we have
\[
g_i\circ \Theta_i \in \mathcal{F}_{\loc}(\phi (U_{\phi}\cap U_{x_i}),\R^n),
\]
whence $\Phi_i\circ g_i\circ \Theta_i
\in \mathcal{F}_{\loc}(\phi (U_{\phi}\cap U_{x_i}),\R^n)$,
by Lemma~\ref{pf-locloc}.
Note that
\[
(\psi\circ f\circ
\phi^{-1})|_{\phi(U_\phi\cap U_{x_i})}
=\Phi_i\circ g_i\circ\Theta_i;
\]
in fact, for all $y\in \phi(U_\phi\cap U_{x_i})$, we have
\[
(\Phi_i\circ g_i\circ \Theta_i)(y)=((\psi\circ \psi^{-1}_{x_i})\circ(\psi_{x_i}\circ\gamma\circ \phi_{x_i}^{-1})
\circ (\phi_{x_i}\circ\phi^{-1}))(y)=(\psi\circ\gamma\circ \phi^{-1})(y).
\]
Since $\overline{W}\subseteq V_{\phi} = \bigcup\limits^r_{i=1}\phi (U_{\phi}\cap U_{x_i})$,
we are in
the situation of Lemma~\ref{PartUnit_loc}
and deduce that
$\psi\circ \gamma\circ \phi^{-1}|_W
\in \mathcal{F}(W,\R^n)$. Thus
$\psi\circ\gamma\circ\phi^{-1}\in \cF_\loc(V_\phi,\R^n)$.
\\[2.3mm]
Conversely, let $\gamma\colon M\to N$
be a function such that, for each $x\in M$, there are a chart
$\phi\colon U_{\phi}\rightarrow V_{\phi}\subseteq \R^m$
of~$M$ and a chart
$\psi\colon U_{\psi} \rightarrow V_{\psi}\subseteq \R^n$ of~$N$
such that $x\in U_{\phi}$ holds, $\gamma(U_{\phi})\subseteq U_{\psi}$ and $\psi\circ\gamma\circ\phi^{-1}\in \mathcal{F}_{\loc}(V_{\phi},\R^n)$. 
Since $\cU$ is a basis for the topology on~$\R^m$,
there exists $W\in\cU$ such that $\phi(x)\in W$
and $W$ is relatively compact
in~$V_{\phi}$.
Then $(\psi\circ\gamma\circ \phi^{-1}) |_W\in \mathcal{F}(W,\R^n)$.
Thus $\eta :=\phi |_W\colon \phi^{-1}(W)
\rightarrow W$ is a chart of~$M$
such that $x\in \phi^{-1}(W)$, $\gamma (\phi^{-1}(W))\subseteq U_{\psi}$
and
$\psi\circ\gamma\circ\eta^{-1}=(\psi\circ\gamma\circ\phi^{-1})|_W\in \mathcal{F}(W,\R^n)$.
Hence $\gamma\in\cF(M,N)$.
\end{proof}
\begin{la}\label{PF-between}
Let $\Phi\colon N_1\to N_2$
be a smooth map between finite-dimensional
smooth manifolds,
and $M$ be a compact $m$-dimensional
smooth manifold.
Then $\Phi\circ\gamma\in \cF(M,N_2)$
for each $\gamma\in\cF(M,N_1)$.
\end{la}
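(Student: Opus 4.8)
The plan is to verify the defining condition of $\cF(M,N_2)$ (Definition~\ref{f-to-N}) at each point of~$M$, manufacturing suitable charts from those that witness $\gamma\in\cF(M,N_1)$. Write $n_i:=\dim(N_i)$ and fix $x\in M$. Since $\gamma\in\cF(M,N_1)$, I choose a chart $\phi\colon U_\phi\to V_\phi\sub\R^m$ of~$M$ with $V_\phi\in\cU$ and $x\in U_\phi$, together with a chart $\psi_1\colon U_{\psi_1}\to V_{\psi_1}\sub\R^{n_1}$ of~$N_1$, such that $\gamma(U_\phi)\sub U_{\psi_1}$ and $g:=\psi_1\circ\gamma\circ\phi^{-1}\in\cF(V_\phi,\R^{n_1})$. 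Picking a chart $\psi_2\colon U_{\psi_2}\to V_{\psi_2}\sub\R^{n_2}$ of~$N_2$ around $\Phi(\gamma(x))$, the set $O:=U_{\psi_1}\cap\Phi^{-1}(U_{\psi_2})$ is an open neighbourhood of $\gamma(x)$ in~$N_1$, and (recalling that each element of $\cF(M,N_1)$ is continuous, so that $\gamma^{-1}(O)$ is open) $V':=\phi(\gamma^{-1}(O)\cap U_\phi)$ is an open neighbourhood of $\phi(x)$ in~$V_\phi$. Using that $\cU$ is a basis for the topology of~$\R^m$ to pick some $U\in\cU$ with $\phi(x)\in U\sub V'$, Definition~\ref{good-coll}(b) applied to the compact set $\{\phi(x)\}\sub U$ then yields $W\in\cU$ with $\phi(x)\in W$ whose closure $\wb W$ is compact and contained in~$V'$.

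On $V'$ one has $\gamma\circ\phi^{-1}=\psi_1^{-1}\circ g$ with $g(V')\sub Q:=\psi_1(O)$, so that $\psi_2\circ\Phi\circ\gamma\circ\phi^{-1}|_{V'}=\Psi\circ g|_{V'}$ for the smooth map $\Psi:=\psi_2\circ\Phi\circ\psi_1^{-1}\colon Q\to\R^{n_2}$ defined on the open set $Q\sub\R^{n_1}$. Since $V_\phi\in\cU$ gives $g\in\cF(V_\phi,\R^{n_1})\sub\cF_\loc(V_\phi,\R^{n_1})$, Lemma~\ref{loc-restr} yields $g|_{V'}\in\cF_\loc(V',\R^{n_1})$, and as $g(V')\sub Q$ Lemma~\ref{pf-locloc} gives $\Psi\circ g|_{V'}\in\cF_\loc(V',\R^{n_2})$. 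Because $W\in\cU$ is relatively compact in~$V'$, restricting to~$W$ produces $\psi_2\circ\Phi\circ\gamma\circ\phi^{-1}|_W=(\Psi\circ g|_{V'})|_W\in\cF(W,\R^{n_2})$ by the very definition of $\cF_\loc$. Thus $\eta:=\phi|_{\phi^{-1}(W)}\colon\phi^{-1}(W)\to W$ is a chart of~$M$ with $W\in\cU$, $x\in\phi^{-1}(W)$, $(\Phi\circ\gamma)(\phi^{-1}(W))\sub U_{\psi_2}$, and $\psi_2\circ(\Phi\circ\gamma)\circ\eta^{-1}\in\cF(W,\R^{n_2})$, which is exactly the condition required at~$x$. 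As $x$ was arbitrary, $\Phi\circ\gamma\in\cF(M,N_2)$.

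The only genuine subtlety is that $\Psi=\psi_2\circ\Phi\circ\psi_1^{-1}$ is defined merely on the open subset $Q\sub\R^{n_1}$ rather than on all of~$\R^{n_1}$, so Lemma~\ref{Consequences_F} (which concerns globally defined smooth maps) does not apply directly; this is precisely what Lemma~\ref{pf-locloc} is designed to handle, at the cost of passing through $\cF_\loc$ and then restricting to the relatively compact~$W$. Alternatively, one could bypass Lemma~\ref{pf-locloc} and extend $\Psi$ to a global smooth map $\wh\Psi\colon\R^{n_1}\to\R^{n_2}$ agreeing with~$\Psi$ on a neighbourhood of the compact set $g(\wb W)\sub Q$ (multiply by a cutoff $\xi\in C^\infty_c(Q,\R)$ with $\xi\equiv 1$ near $g(\wb W)$) and then invoke Lemma~\ref{Consequences_F} with $U:=V_\phi$ and $V:=W$. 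The remaining bookkeeping---matching the chart domains so that $\gamma(\phi^{-1}(W))\sub O$ while keeping the range of the shrunken $M$-chart inside~$\cU$---is routine.
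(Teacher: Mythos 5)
Your proof is correct, and its core strategy is the same as the paper's: verify Definition~\ref{f-to-N} pointwise, write the local representative of $\Phi\circ\gamma$ as a smooth map $\Psi=\psi_2\circ\Phi\circ\psi_1^{-1}$ (defined only on an open subset of $\R^{n_1}$) composed with a local representative of $\gamma$, apply Lemma~\ref{pf-locloc}, and finally restrict to a relatively compact $W\in\cU$ to land in $\cF(W,\R^{n_2})$. The difference lies in the chart bookkeeping, and it is a real one. The paper shrinks the charts on the \emph{manifolds}: it chooses $\psi_1$ with $\Phi(U_{\psi_1})\sub U_{\psi_2}$ and a chart $\phi$ of $M$ with $\gamma(U_\phi)\sub U_{\psi_1}$; since these adapted charts need not be the ones witnessing $\gamma\in\cF(M,N_1)$, the paper must invoke Lemma~\ref{charact_F(M,E)} (the chart-independence characterization, itself proved via partitions of unity and Lemma~\ref{PartUnit_loc}) to conclude that $\psi_1\circ\gamma\circ\phi^{-1}\in\cF_\loc(V_\phi,\R^{n_1})$. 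You instead keep the witnessing chart, so that $g=\psi_1\circ\gamma\circ\phi^{-1}\in\cF(V_\phi,\R^{n_1})$ is immediate from the definition, and do all the shrinking inside $\R^m$, passing to $V'=\phi(\gamma^{-1}(O)\cap U_\phi)$ via the elementary restriction Lemma~\ref{loc-restr}. What this buys is independence from the comparatively heavy Lemma~\ref{charact_F(M,E)}, at the cost of the explicit $O$, $V'$, $Q$ constructions and the verification that $g(V')\sub Q$; the paper's route carries a heavier dependency but has cleaner local bookkeeping. Both arguments are complete, and your closing remark correctly identifies why Lemma~\ref{Consequences_F} cannot be used directly and how a cutoff would substitute for Lemma~\ref{pf-locloc} (which is exactly how that lemma is proved in the paper).
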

\begin{proof}
For each $x\in M$,
there exists a chart $\psi_2\colon U_2\to V_2\sub\R^{n_2}$
of $N_2$ with $\Phi(\gamma(x))\in U_2$
and a chart $\psi_1\colon U_1\to V_1\sub\R^{n_1}$
on~$N_1$ such that $\gamma(x)\in U_1$
and $\Phi(U_1)\sub U_2$.
Moreover, there exists a chart
$\phi\colon U\to V\sub\R^m$
for $M$ with $x\in U$ such that $\gamma(U)\sub U_1$.
By Definition~\ref{good-coll}\,(a),
there exists $W\in\cU$
such that $\phi(x)\in W$ and $W$
is relatively compact in~$V$.
Now $\psi_1\circ\gamma\circ\phi^{-1}\in\cF_{\loc}(V,\R^{n_1})$,
by Lemma~\ref{charact_F(M,E)}.
Since $\psi_2\circ \Phi\circ\psi_1^{-1}$
is smooth, Lemma~\ref{pf-locloc} shows that
$\psi_2\circ \Phi\circ \gamma\circ\phi^{-1}=
(\psi_2\circ\Phi\circ \psi_1^{-1})\circ (\psi_1\circ\gamma\circ\phi^{-1})\in\cF_{\loc}(V,\R^{n_2})$.
Hence $\psi_2\circ (\Phi\circ\gamma)\circ\phi^{-1}|_W\in\cF(W,\R^{n_2})$.
Thus $\Phi\circ\gamma\in\cF(M,N_2)$.
\end{proof}
As a special case of Definition~\ref{f-to-N}, taking $N:=E$
we defined $\mathcal{F}(M,E)$
whenever $M$ is an $m$-dimensional
compact smooth manifold and $E$ a
finite-dimensional real vector space.
\begin{numba}\label{the-ini-top}
We give $\mathcal{F}(M,E)$
the initial topology~$\cO$ with respect to the mappings
\[
\cF(M,E)\to \cF_{\loc}(V_\phi,E),\quad\gamma\mto \gamma\circ\phi^{-1},
\]
for $\phi\colon U_\phi\to V_\phi\sub\R^m$
in the maximal $C^\infty$-atlas $\cA$ of~$M$.
As the latter maps
are linear and separate points on $\cF(M,E)$,
the topology $\cO$ makes
$\cF(M,E)$ a Hausdorff locally convex space.
By transitivity of initial topologies
(see
\cite[Lemma~A.2.7]{GaN}), the topology $\cO$ is also
initial with respect to the maps
\[
\cF(\phi^{-1}|_W,E)\colon \cF(M,E)\to \cF(W,E),\quad \gamma\mto\gamma\circ\phi^{-1}|_W,
\]
for $\phi\colon U_\phi\to V_\phi\sub\R^m$
in $\cA$ and $W\in\cU$ such that $W$
is a relatively compact subset of~$V_\phi$.
\end{numba}
Finitely many pairs $(\phi,W)$
suffice to define the topology~$\cO$.
\begin{prop}\label{fin-many}
Let $\phi_j\colon U_j\to V_j\sub\R^m$
be charts for $M$
for $j\in \{1,\ldots,k\}$
and $W_j\in \cU$ such that
$W_j$ is relatively compact in~$V_j$
and $M=\bigcup_{j=1}^k\phi_j^{-1}(W_j)$.
Then the map
\[
\Theta\colon (\cF(M,E),\cO)\to \prod_{j=1}^k
\cF(W_j,E),\quad \gamma\mto (\gamma\circ\phi_j^{-1}|_{W_j})_{j=1}^k
\]
is linear and a topological embedding with closed
image. The image $\im(\Theta)$
is the set~$S$ of all $(\gamma_j)_{j=1}^k\in\prod_{j=1}^k\cF(W_j,E)$
such that
$\gamma_i(\phi_i(x))=\gamma_j(\phi_j(x))$
for all $i,j\in\{1,\ldots, k\}$
and $x\in \phi_i^{-1}(W_i)\cap\phi_j^{-1}(W_j)$.
\end{prop}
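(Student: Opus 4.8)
The plan is to establish the claims in four stages: the algebraic and elementary topological properties of $\Theta$, the identification of the image with $S$, the closedness of $S$, and finally the embedding property, which is the crux. First, linearity of $\Theta$ is clear, and each component $\gamma\mto\gamma\circ\phi_j^{-1}|_{W_j}$ is one of the maps $\cF(\phi_j^{-1}|_{W_j},E)$ with respect to which $\cO$ is initial (see~\ref{the-ini-top}); hence $\Theta$ is continuous. Since $M=\bigcup_{j=1}^k\phi_j^{-1}(W_j)$, a function $\gamma\in\cF(M,E)$ is determined by the family $(\gamma\circ\phi_j^{-1}|_{W_j})_{j=1}^k$, so $\Theta$ is injective.

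For the image, the inclusion $\im(\Theta)\sub S$ is immediate, since for $\gamma_j=\gamma\circ\phi_j^{-1}|_{W_j}$ both sides of the compatibility condition equal $\gamma(x)$. Conversely, given $(\gamma_j)_{j=1}^k\in S$, I would define $\gamma\colon M\to E$ piecewise by $\gamma(x):=\gamma_j(\phi_j(x))$ for $x\in\phi_j^{-1}(W_j)$; this is well defined by the compatibility condition. To see $\gamma\in\cF(M,E)$, fix $x\in M$, choose $j$ with $x\in\phi_j^{-1}(W_j)$, and pick $W'\in\cU$ with $\phi_j(x)\in W'$ relatively compact in $W_j$ (using that $\cU$ is a basis, Definition~\ref{good-coll}); then the chart $\eta:=\phi_j|_{\phi_j^{-1}(W')}$ satisfies $\gamma\circ\eta^{-1}=\gamma_j|_{W'}\in\cF(W',E)$ by Lemma~\ref{RES}, so the defining property of $\cF(M,E)$ (Definition~\ref{f-to-N}) holds at $x$. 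Hence $\gamma\in\cF(M,E)$ and $\Theta(\gamma)=(\gamma_j)_{j=1}^k$, giving $S\sub\im(\Theta)$.

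For closedness of $S$, observe that for fixed $i,j$ and $x\in\phi_i^{-1}(W_i)\cap\phi_j^{-1}(W_j)$, the map $(\eta_\ell)_\ell\mto\eta_i(\phi_i(x))-\eta_j(\phi_j(x))$ is continuous on $\prod_\ell\cF(W_\ell,E)$, since point evaluations are continuous on the spaces $BC(W_\ell,E)$ (with the supremum norm) and the inclusions $\cF(W_\ell,E)\to BC(W_\ell,E)$ are continuous. Thus $S$ is an intersection of closed preimages of $\{0\}$, hence closed.

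The hard part is that $\Theta$ is a topological embedding; by continuity of $\Theta$ it suffices to show that each defining map $p_{\phi,W}\colon\gamma\mto\gamma\circ\phi^{-1}|_W$ of $\cO$ (for a chart $\phi\colon U_\phi\to V_\phi$ and $W\in\cU$ relatively compact in $V_\phi$) factors through $\Theta$ by a continuous map, for then $\cO$ is also initial with respect to $\Theta$. For this I would use the gluing machinery. Put $\Theta_j:=\phi_j\circ\phi^{-1}$ on $\phi(U_\phi\cap U_j)$ and $B_j:=\phi(U_\phi\cap\phi_j^{-1}(W_j))$; then $\Theta_j|_{B_j}$ is a $C^\infty$-diffeomorphism onto an open subset of $W_j$, and crucially $\bigcup_{j=1}^k B_j=\phi(U_\phi\cap\bigcup_j\phi_j^{-1}(W_j))=\phi(U_\phi)=V_\phi$, so $W$ is relatively compact in $\bigcup_j B_j$. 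For $(\gamma_j)_{j=1}^k\in S=\im(\Theta)$ set $\eta_j:=\gamma_j|_{\Theta_j(B_j)}\circ\Theta_j|_{B_j}$; invoking the continuous inclusion $\cF(W_j,E)\to\cF_{\loc}(W_j,E)$ (from Lemma~\ref{RES}), the restriction of Lemma~\ref{loc-restr}, and Lemma~\ref{right_loc}, each $\eta_j$ lies in $\cF_{\loc}(B_j,E)$ and depends continuously on $\gamma_j$. On $S$ one checks $\eta_j=(\gamma\circ\phi^{-1})|_{B_j}$, so the $\eta_j$ agree on overlaps and $(\eta_j)_j$ lies in the space $\cE$ of Remark~\ref{glueing-cts}; applying Lemma~\ref{PartUnit_loc} together with the continuity of $\glue\colon\cE\to\cF(W,E)$ yields $p_{\phi,W}(\gamma)=\gamma\circ\phi^{-1}|_W=\glue((\eta_j)_j)$ as a continuous function of $\Theta(\gamma)$. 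Since $\cO$ is initial with respect to the family of maps $p_{\phi,W}$ (see~\ref{the-ini-top}), it follows that $\cO$ is initial with respect to $\Theta$, so $\Theta$ is a topological embedding. I expect the bookkeeping of the transition maps and their domains — in particular the identity $\bigcup_j B_j=V_\phi$ and the fact that $\Theta_j|_{B_j}$ maps into $W_j$ — to be the main technical obstacle.
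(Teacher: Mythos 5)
Your proof is correct and follows essentially the same route as the paper: the same factorization of each defining map $\gamma\mto\gamma\circ\phi^{-1}|_W$ through restriction to the sets $\phi(U_\phi\cap\phi_j^{-1}(W_j))$, pullback along $\phi_j\circ\phi^{-1}$ (Lemmas~\ref{loc-restr} and~\ref{right_loc}), and the continuous glueing map of Remark~\ref{glueing-cts}, together with the same point-evaluation argument for closedness and the same piecewise construction identifying $\im(\Theta)$ with $S$. The bookkeeping you flag as the main obstacle is exactly what the paper carries out, and your version of it is sound.
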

\begin{proof}
Let $\cT$ be the initial topology on $\cF(M,E)$
with respect to the maps $\cF(\phi_j^{-1}|_{W_j},E)
\colon \cF(M,E)\to
\cF(W_j,E)$, $\gamma\mto \gamma\circ\phi_j^{-1}|_{W_j}$
for $j\in \{1,\ldots, k\}$.
Then $\cT\sub\cO$.
To see that $\cO\sub \cT$
holds, we have to show that
$\cT$ makes $\cF(\phi^{-1}|_W,E)$
continuous for each chart $\phi\colon U\to V\sub\R^m$
of~$M$
and each $W\in \cU$
which is relatively compact in~$V$.
Abbreviate $Q_j:=\phi_j^{-1}(W_j)$ for $j\in\{1,\ldots, k\}$.
Let $\cE\sub \prod_{j=1}^k \cF_{\loc}(\phi(U\cap Q_j),E)$
be the set of those $(\gamma_j)_{j=1}^k$
with $\gamma_i(x)=\gamma_j(x)$
for all $x\in \phi(U\cap Q_i\cap Q_j)$.
Then $W\sub \bigcup_{j=1}^n \phi(U\cap Q_j)$.~Let
\[
\glue\colon \cE\to\cF(W,E)
\]
be the continuous linear glueing map,
as in Remark~\ref{glueing-cts}.
For $j\in\{1,\ldots,k\}$, consider the $C^\infty$-diffeomorphism
$\Theta_j:=\phi_j\circ \phi^{-1}\colon \phi(U\cap Q_j)\to
\phi_j(U\cap Q_j)$.
The inclusion map $\lambda_j\colon \cF(W_j,E)\to\cF_{\loc}(W_j,E)$
is continuous linear
and so is the map $\cF_\loc(\Theta_j,E)\colon \cF_{\loc}(\phi_j(U\cap Q_j),E)\to
\cF_{\loc}(\phi(U\cap Q_j),E)$.
Abbreviate $h_j:=\cF_{\loc}(\Theta_j,E)\circ \delta^E_{\phi_j(U\cap Q_j),W_j}\circ
\lambda_j\circ\cF(\phi_j^{-1}|_{W_j},E)$.
Then $h_j(\gamma)=(\gamma\circ\phi^{-1})|_{\phi(U\cap Q_j)}$
for all $\gamma\in \cF(M,E)$ and $j\in\{1,\ldots,k\}$,
entailing that $(h_j(\gamma))_{j=1}^k\in\cE$
and $\gamma\circ \phi^{-1}|_W=\glue((h_j(\gamma))_{j=1}^k)$.
Thus $\cF(\phi^{-1}|_W,E)=\glue\circ (h_1,\ldots, h_k)$
is continuous on $(\cF(M,E),\cT)$.\\[2.3mm]
By the preceding, $\cO=\cT$,
whence the linear map $\Theta$ is a topological embedding.
Since $\cF(W_j,E)\sub BC(W_j,E)$
and the inclusion map is continuous,
the point evaluation $\cF(W_j,E)\to E$, $\gamma\mto\gamma(x)$
is continuous for each $x\in W_j$.
As it is also linear,
we see that~$S$ is a closed vector subspace
of $\prod_{j=1}^k\cF(W_j,E)$.
We easily see that $\im(\Theta)\sub S$.
If $(\gamma_j)_{j=1}^k \in S$,
then $\gamma\colon M\to E$, $\gamma(x):=\gamma_j(\phi_j(x))$
if $x\in\phi^{-1}_j(W_j)=:Q_j$ with $j\in\{1,\ldots, k\}$
is well defined and continuous, as $\gamma|_{Q_j}
=\gamma_j\circ \phi_j|_{Q_j}$ is continuous.
Since $\eta_j:=\phi_j|_{Q_j}\colon Q_j\to W_j$
is a chart of~$M$ with range in~$\cU$
such that $\gamma\circ\eta_j=\gamma_j\in \cF(W_j,E)$,
going back to Definition~\ref{f-to-N}
we see that $\gamma\in \cF(M,E)$.
By construction, $\Theta(\gamma)=(\gamma_j)_{j=1}^k$.
Thus $S\sub \im(\Theta)$ and thus $S=\im(\Theta)$.
\end{proof}
We record an immediate consequence:
\begin{cor}
The locally convex space
$\cF(M,E)$ is integral complete.
For $(\phi_1,W_1)$, $\ldots$, $(\phi_k,W_k)$
as in Proposition~{\rm\ref{fin-many}},
we have:
\begin{itemize}
\item[\rm(a)]
If $\cF(W_j,E)$
is a Banach space for all
$j\in\{1,\ldots, k\}$,
then $\cF(M,E)$
is a Banach space.
\item[\rm(b)]
If $\langle.,.\rangle_j$
is a scalar product
on $\cF(W_j,E)$
such that the associated norm
defines its topology
and makes it a Hilbert space,
then
\[
\cF(M,E)\times\cF(M,E)\to\R ,\;\;
(\gamma,\eta)\mto \langle \gamma,\eta\rangle:=\!\!
\sum_{j=1}^k\langle \gamma\circ\phi_j^{-1}|_{W_j},
\eta\circ\phi_j^{-1}|_{W_j}\rangle
\]
is a scalar product making $\cF(M,E)$
a Hilbert space whose associated
norm defines the given topology~$\cO$
on $\cF(M,E)$. $\,\square$
\end{itemize}
\end{cor}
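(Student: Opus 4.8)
The plan is to read off all three assertions directly from Proposition~\ref{fin-many}, which identifies $(\cF(M,E),\cO)$, via the linear topological embedding $\Theta$, with the closed vector subspace $S=\im(\Theta)$ of the finite product $P:=\prod_{j=1}^k\cF(W_j,E)$. Thus each of the three properties---integral completeness, being a Banach space, being a Hilbert space---only has to be shown to pass first to finite products and then to closed vector subspaces; transporting it back along the topological isomorphism $\Theta\colon\cF(M,E)\to S$ then yields the claim, and since $\Theta$ is a topological embedding the resulting norm (in the Banach and Hilbert cases) defines the original topology~$\cO$.

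For integral completeness, recall that each $\cF(W_j,E)$ is integral complete by~\ref{vector_espace_E}. First I would note that $P$ is integral complete, since for a continuous curve $c\colon[0,1]\to P$ the weak integral may be formed componentwise, $\int_0^1 c(t)\,dt=\bigl(\int_0^1(\pr_j\circ c)(t)\,dt\bigr)_{j=1}^k$. Next I would show that the closed subspace~$S$ inherits integral completeness. Given a continuous curve $c\colon[0,1]\to S$, its weak integral $I:=\int_0^1 c(t)\,dt$ exists in~$P$, and I claim $I\in S$. Indeed, $S$ is closed and convex, so if $I\notin S$ then by Hahn--Banach there are a continuous linear functional $\lambda$ on~$P$ and $\alpha\in\R$ with $\lambda(I)>\alpha\geq\lambda(s)$ for all $s\in S$; but $\lambda(I)=\int_0^1\lambda(c(t))\,dt\leq\alpha$ since $c(t)\in S$, a contradiction. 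Hence $I\in S$, and as the continuous functionals on~$S$ are restrictions of those on~$P$, the element $I$ is the weak integral of~$c$ in~$S$. Transporting along $\Theta$ shows $\cF(M,E)$ is integral complete (cf.~\cite{Wei}).

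For~(a), a finite product of Banach spaces is a Banach space and a closed vector subspace of a Banach space is again a Banach space; hence $S$ is Banach, and so is $\cF(M,E)\cong S$, with $\cO$ the norm topology. For~(b), the product~$P$ becomes a Hilbert space under $\langle(x_j)_j,(y_j)_j\rangle_P:=\sum_{j=1}^k\langle x_j,y_j\rangle_j$, whose associated norm defines the product topology; its restriction to the closed subspace~$S$ makes~$S$ a Hilbert space. Pulling this scalar product back along $\Theta$ gives precisely the expression $\langle\gamma,\eta\rangle=\sum_{j=1}^k\langle\gamma\circ\phi_j^{-1}|_{W_j},\eta\circ\phi_j^{-1}|_{W_j}\rangle$ of the statement, and because $\Theta$ is a topological embedding the associated norm defines~$\cO$. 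The only step requiring any care is the stability of integral completeness under passage to a closed subspace, settled by the Hahn--Banach separation argument above; everything else is a routine inheritance property of closed subspaces of finite products, which is why the corollary is recorded as an immediate consequence.
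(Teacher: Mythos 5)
Your proposal is correct and takes essentially the same route the paper intends: the corollary carries the marker of an immediate consequence of Proposition~\ref{fin-many}, namely transporting integral completeness, the Banach property, and the Hilbert structure through the topological isomorphism $\Theta$ onto the closed subspace $S$ of the finite product $\prod_{j=1}^k\cF(W_j,E)$. Your filled-in details (componentwise weak integrals, the Hahn--Banach argument showing a closed subspace inherits integral completeness, and the standard inheritance of Banach/Hilbert structures by closed subspaces of finite products) are exactly the routine verifications the paper leaves to the reader.
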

\begin{la}
The inclusion map
$\lambda_M^E\colon \mathcal{F}(M,E)\to C(M,E)$, $\gamma\mto \gamma$
is continuous.
For each $x\in M$, the point evaluation
\[
\ev_x\colon \cF(M,E)\to E,\quad \gamma\mto \gamma(x)
\]
is continuous and linear.
Let
$\cF(M,U):=\{\gamma\in \cF(M,E)\colon \gamma(M)\sub U\}$
for an open subset $U\sub E$.
Then $\cF(M,U)$ is an open subset of $\cF(M,E)$.
\end{la}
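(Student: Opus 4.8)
The plan is to reduce everything to the finite atlas furnished by Proposition~\ref{fin-many}. Fix a norm $\|\cdot\|$ on~$E$ defining its topology, choose charts $\phi_j\colon U_j\to V_j\sub\R^m$ and sets $W_j\in\cU$ relatively compact in~$V_j$ with $M=\bigcup_{j=1}^k\phi_j^{-1}(W_j)$, and abbreviate $Q_j:=\phi_j^{-1}(W_j)$. By Proposition~\ref{fin-many} the topology~$\cO$ on $\cF(M,E)$ is initial with respect to the maps $\cF(\phi_j^{-1}|_{W_j},E)\colon\cF(M,E)\to\cF(W_j,E)$, $\gamma\mto\gamma\circ\phi_j^{-1}|_{W_j}$. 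Since $\cF(W_j,E)$ includes continuously into $BC(W_j,E)$ by hypothesis, for each $j$ the assignment $\gamma\mto\|\gamma\circ\phi_j^{-1}|_{W_j}\|_\infty$ is a continuous seminorm on $\cF(M,E)$ (a continuous seminorm of $BC(W_j,E)$ composed with the continuous linear map $\cF(\phi_j^{-1}|_{W_j},E)$).

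For the continuity of $\lambda_M^E$, recall that $M$ being compact, $C(M,E)$ carries the topology of uniform convergence, i.e.\ the supremum norm $\|\cdot\|_\infty$. As every point of~$M$ lies in some~$Q_j$ and $\sup_{x\in Q_j}\|\gamma(x)\|=\|\gamma\circ\phi_j^{-1}|_{W_j}\|_\infty$, we obtain $\|\gamma\|_\infty=\max_{1\le j\le k}\|\gamma\circ\phi_j^{-1}|_{W_j}\|_\infty$ for each $\gamma\in\cF(M,E)$. Thus $\gamma\mto\|\lambda_M^E(\gamma)\|_\infty$ is a maximum of finitely many continuous seminorms and hence a continuous seminorm; as $\lambda_M^E$ is linear with values in the normed space $C(M,E)$, this shows that $\lambda_M^E$ is continuous.

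The point evaluation $\ev_x$ is linear by inspection. It is continuous because $\ev_x=\ev_x^C\circ\lambda_M^E$, where $\ev_x^C\colon C(M,E)\to E$ denotes the (obviously continuous) point evaluation on the normed space $C(M,E)$; so continuity of $\ev_x$ follows at once from the first part. Alternatively, picking $j$ with $x\in Q_j$ and writing $\ev_x$ as the composite of the continuous map $\cF(\phi_j^{-1}|_{W_j},E)$ with evaluation at $\phi_j(x)\in W_j$ (continuous on $\cF(W_j,E)$, as noted in the proof of Proposition~\ref{fin-many}) yields the same conclusion.

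Finally, to see that $\cF(M,U)$ is open, note that $\cF(M,U)=(\lambda_M^E)^{-1}\bigl(C(M,U)\bigr)$ with $C(M,U):=\{f\in C(M,E)\colon f(M)\sub U\}$, so by the continuity of $\lambda_M^E$ it suffices to show that $C(M,U)$ is open in $C(M,E)$. This is trivial if $U=E$; otherwise, given $f\in C(M,U)$, the image $f(M)$ is a compact subset of the open set~$U$ (and the claim is vacuous if it is empty), so its distance $\ve$ to the closed set $E\setminus U$ is strictly positive, and every $g\in C(M,E)$ with $\|g-f\|_\infty<\ve$ satisfies $g(M)\sub U$; hence the open $\ve$-ball about~$f$ lies in $C(M,U)$. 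I do not expect any serious obstacle here; the only point requiring care is the decomposition $\|\gamma\|_\infty=\max_j\|\gamma\circ\phi_j^{-1}|_{W_j}\|_\infty$ of the supremum over~$M$ into a maximum over the finite cover $(Q_j)$, which rests precisely on $M=\bigcup_{j=1}^k Q_j$.
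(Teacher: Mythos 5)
Your proof is correct and follows essentially the same route as the paper: both reduce to the finite family $(\phi_j,W_j)$ of Proposition~\ref{fin-many}, use the identity $\|\gamma\|_\infty=\max_j\|\gamma\circ\phi_j^{-1}|_{W_j}\|_\infty$ together with continuity of the inclusions $\cF(W_j,E)\to BC(W_j,E)$ to get continuity of $\lambda_M^E$, and then obtain the openness of $\cF(M,U)=(\lambda_M^E)^{-1}(C(M,U))$ and the continuity of $\ev_x$ by factoring through $C(M,E)$. The only difference is that you spell out why $C(M,U)$ is open in $C(M,E)$, which the paper takes as known.
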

\begin{proof}
For a norm $\|\cdot\|$ on~$E$,
the topology on $C(M,E)$
can be obtained by the corresponding supremum norm~$\|\cdot\|_\infty$.
Using $(\phi_1,W_1)$, $\ldots$, $(\phi_k,W_k)$
as in Proposition~\ref{fin-many}
and $Q_j:=\phi_j^{-1}(W_j)$,
we have
\[
\|\gamma\|_\infty=\max_{j=1,\ldots, k}
\|\gamma|_{Q_j}\|_\infty
=\max_{j=1,\ldots,k}\|\gamma\circ \phi_j^{-1}|_{W_j}\|_\infty
=\max_{j=1,\ldots,k}\|\cF(\phi_j^{-1}|_{W_j},E)(\gamma)\|_\infty.
\]
As $\cF(\phi_j^{-1}|_{W_j},E)\colon \cF(M,E)\to\cF(W_j,E)$
is continuous linear and the supremum norm
on $\cF(W_j,E)$ is continuous,
we see that $\|\lambda^E_M(\gamma)\|_\infty=\|\gamma\|_\infty$ is continuous
in $\gamma\in\cF(M,E)$,
entailing that the linear map $\lambda^E_M$
is continuous.
Since $C(M,U)$ is open in $C(M,E)$,
we deduce that $\cF(M,U)=(\lambda^E_M)^{-1}(C(M,U))$
is open in $\cF(M,E)$.
Finally, use that point evaluations
on $C(M,E)$ are continuous.
\end{proof}
\begin{la}\label{PF_cont_Manifold}
If $M$ is a compact smooth manifold of dimension~$m$, $E$ and~$F$ are finite-dimensional
vector spaces,
$U\sub E$ is open and $f\colon M\times U\rightarrow F$ a smooth mapping, then
$f_*(\gamma):=f\circ (\id_M,\gamma)\in\cF(M,F)$
for each $\gamma\in \cF(M,U)\sub \cF(M,E)$
and the map
$f_*\colon \mathcal{F}(M,U)\rightarrow \mathcal{F}(M,F)$
is continuous.
\end{la}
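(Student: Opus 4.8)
The plan is to reduce the statement, chart by chart, to the Pushforward Axiom~(PF). The one genuine mismatch is that (PF) needs a smooth map defined on $U'\times E$ (all of the vector space~$E$), whereas $f$ is only given on $M\times U$ with $U\sub E$ merely open; a cutoff function in the $E$-variable will bridge this gap. First I would settle membership. Fix $\gamma\in\cF(M,U)$ and $x\in M$, choose a chart $\phi\colon U_\phi\to V_\phi$ of~$M$ around~$x$, and (since $\cU$ is a basis) pick $W\in\cU$ with $\phi(x)\in W$ and $W$ relatively compact in~$V_\phi$; by Definition~\ref{good-coll}(c) pick $P\in\cU$ with $\wb{W}\sub P$ and $P$ relatively compact in~$V_\phi$. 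Writing $g:=\gamma\circ\phi^{-1}$, Lemma~\ref{charact_F(M,E)} gives $g\in\cF_\loc(V_\phi,E)$, so $g|_P\in\cF(P,E)$, and $K:=g(\wb{P})\sub\gamma(M)\sub U$ is compact. I would then choose $\xi\in C^\infty_c(U,\R)$ with $\xi=1$ on a neighbourhood of~$K$ and define $\hat f\colon V_\phi\times E\to F$ by $\hat f(y,e):=\xi(e)f(\phi^{-1}(y),e)$ for $e\in U$ and $\hat f(y,e):=0$ otherwise; since $\Supp(\xi)$ is compact in~$U$, $\hat f$ is smooth. Applying (PF) with $U':=P$ and $V':=W$ to $\hat f|_{P\times E}$ yields $\hat f\circ(\id_W,g|_W)\in\cF(W,F)$, and because $g(W)\sub K$ where $\xi=1$ this coincides with $(f_*(\gamma)\circ\phi^{-1})|_W=f_*(\gamma)\circ\eta^{-1}$ for the chart $\eta:=\phi|_{\phi^{-1}(W)}$ with range $W\in\cU$. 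By Definition~\ref{f-to-N}, $f_*(\gamma)\in\cF(M,F)$.

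For continuity, by \ref{the-ini-top} and Proposition~\ref{fin-many} it suffices, for a fixed finite family $(\phi_j,W_j)_{j=1}^k$ with $M=\bigcup_j\phi_j^{-1}(W_j)$, to show each map $\gamma\mto f_*(\gamma)\circ\phi_j^{-1}|_{W_j}$ into $\cF(W_j,F)$ is continuous; as continuity is local, I only need a neighbourhood of each $\gamma_0\in\cF(M,U)$ on which this holds. The essential point is to make the cutoff \emph{uniform}. Since $\gamma_0(M)$ is compact in~$U$, choose an open $U_1$ with $\gamma_0(M)\sub U_1$, $\wb{U_1}$ compact, and $\wb{U_1}\sub U$; by continuity of $\lambda^E_M\colon\cF(M,E)\to C(M,E)$, the set $O:=\{\gamma\in\cF(M,E)\colon\gamma(M)\sub U_1\}$ is an open neighbourhood of $\gamma_0$ contained in $\cF(M,U)$. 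Fixing~$j$, pick $P_j\in\cU$ with $\wb{W_j}\sub P_j$ relatively compact in~$V_j$, and now a \emph{single} $\xi\in C^\infty_c(U,\R)$ with $\xi=1$ on a neighbourhood of $\wb{U_1}$, producing $\hat f_j$ from $f$ and $\phi_j$ exactly as above. Then for every $\gamma\in O$ and $y\in W_j$ one has $(\gamma\circ\phi_j^{-1})(y)\in U_1$, so $f_*(\gamma)\circ\phi_j^{-1}|_{W_j}=(\hat f_j)_*\big((\gamma\circ\phi_j^{-1})|_{P_j}\big)$. The right-hand side is the composition of $\gamma\mto\gamma\circ\phi_j^{-1}\in\cF_\loc(V_j,E)$ (continuous by \ref{the-ini-top}), the restriction $\cF_\loc(V_j,E)\to\cF(P_j,E)$ (continuous, being one of the maps defining the topology of $\cF_\loc(V_j,E)$), and $(\hat f_j)_*\colon\cF(P_j,E)\to\cF(W_j,F)$ (continuous by (PF)); hence it is continuous on~$O$. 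Since $\gamma_0$ was arbitrary and the family $(\phi_j,W_j)$ induces the topology of $\cF(M,F)$, continuity of $f_*$ follows.

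The hard part is precisely the two-step handling of the cutoff. Replacing the given $f$ on $M\times U$ by a globally smooth $\hat f$ on $(\text{base})\times E$ is what allows (PF) to apply at all; but the cutoff depends on the function through its range, so for the continuity statement one cannot fix it once and for all. The resolution is to exploit that $\lambda^E_M$ is continuous, which confines a whole neighbourhood $O$ of $\gamma_0$ to functions with range in a single relatively compact $U_1\sub U$, letting one choose one cutoff valid throughout~$O$. Everything else is bookkeeping with the charts and the initial-topology description of $\cF(M,E)$ and $\cF(M,F)$.
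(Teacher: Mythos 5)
Your proof is correct and takes essentially the same approach as the paper's: compactness of $\gamma_0(M)$ yields a cutoff in the $E$-variable that is valid on a whole neighbourhood $\cF(M,U_1)$ of $\gamma_0$, after which chart-wise application of Axiom~(PF) together with Proposition~\ref{fin-many} gives continuity locally. The only (cosmetic) difference is that the paper first forms a single smooth map $g\colon M\times E\to F$ and then composes with the charts (its $g_j$), whereas you build the chart-level maps $\hat f_j$ directly and treat membership separately from continuity.
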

\begin{proof}
Let $\gamma\in \cF(M,U)$.
Then $\gamma(M)$ is a compact subset
of~$U$,
and we find an open, relatively compact subset
$V\sub U$ such that $\gamma(M)\sub V$.
There exists a smooth function $\xi\colon U\to\R$
with compact support $K:=\Supp(\xi)\sub U$
such that $\xi|_V=1$.
We define a function $g\colon M\times E\to F$ piecewise
via $g(x,y):=\xi(y)f(x,y)$ if $(x,y)\in M\times U$
and $g(x,y):=0$ if $(x,y)\in M\times (E\setminus K)$.
Then $g$ is smooth.
Let $k\in\N$, charts $\phi_j\colon U_j\to V_j\sub\R^m$
for~$M$ and sets $W_j\in\cU$
for $j\in\{1,\ldots, k\}$
be as in Proposition~\ref{fin-many}.
By Definition~\ref{good-coll}\,(c),
for each $j\in \{1,\ldots, k\}$
there exists $Y_j\in\cU$ such that
$Y_j$ is relatively compact in~$V_j$
and $\wb{W}_j\sub Y_j$.
The map
\[
g_j\colon Y_j\times E\to F,\quad (x,y)\mto g(\phi_j^{-1}(x),y)
\]
is smooth, for each $j\in\{1,\ldots, k\}$.
By Axiom (PF),
we have $(g_j)_*(\eta):=g_j\circ (\id_{W_j},\eta|_{W_j})\in\cF(W_j,F)$
for each $\eta\in \cF(Y_j,E)$ and the map
\[
(g_j)_*\colon \cF(Y_j,E)\to \cF(W_j,F)
\]
is continuous. For each $\eta\in \cF(M,E)$,
we have
\[
g_*(\eta)\circ\phi_j^{-1}|_{W_j}=g\circ (\id_M,\eta) \circ \phi_j^{-1}|_{W_j}
=(g_j)_*(\eta\circ \phi_j^{-1}|_{Y_j})\in \cF(W_j,F),
\]
whence $g_*(\eta)\in \cF(M,F)$.
Since $\cF(\phi_j^{-1}|_{W_j},F)\circ g_*
=(g_j)_*\circ \cF(\phi_j^{-1}|_{Y_j},E)$
is continuous for $j\in\{1,\ldots,k\}$,
Proposition~\ref{fin-many} shows that $g_*$
is continuous. For each $\eta$ in the open
neighbourhood $P:=\cF(M,V)$ of $\gamma$ in $\cF(M,U)$,
we have $f_*(\eta)=g_*(\eta)\in \cF(M,F)$.
Notably, $f_*|_P=g_*|_P$ is continuous,
%
%
whence $f_*$ is continuous at~$\gamma$.
\end{proof}
\begin{la}\label{super-mfd-cts}
If $E$ and $F$ are finite-dimensional vector spaces
and $\Phi\colon U\to F$
is a smooth function on an open subset $U\sub E$,
then $\Phi\circ\gamma\in \cF(M,F)$ for each $\gamma\in \cF(M,U)\sub \cF(M,E)$
and the map
$\cF(M,\Phi)\colon \cF(M,U)\to\cF(M,F)$, $\gamma\mto\Phi\circ\gamma$
is continuous.
\end{la}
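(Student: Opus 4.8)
The plan is to recognize the statement as a special case of Lemma~\ref{PF_cont_Manifold}, namely the case in which the smooth map on $M\times U$ does not depend on the $M$-coordinate. Concretely, I would introduce the map
\[
f\colon M\times U\to F,\quad (x,y)\mto \Phi(y),
\]
and note that $f=\Phi\circ\pr_2$ is smooth, because the projection $\pr_2\colon M\times U\to U$ is smooth and $\Phi$ is smooth by hypothesis. This is the only genuine input the construction needs.

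Next I would observe that the pushforward $f_*$ provided by Lemma~\ref{PF_cont_Manifold} computes exactly the desired superposition: for each $\gamma\in\cF(M,U)$ and $x\in M$,
\[
f_*(\gamma)(x)=f(x,\gamma(x))=\Phi(\gamma(x))=(\Phi\circ\gamma)(x),
\]
so that $f_*(\gamma)=\Phi\circ\gamma$ as maps $M\to F$, and hence $\cF(M,\Phi)=f_*$ as maps $\cF(M,U)\to F^M$.

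Applying Lemma~\ref{PF_cont_Manifold} then yields both conclusions simultaneously: $f_*(\gamma)=\Phi\circ\gamma\in\cF(M,F)$ for every $\gamma\in\cF(M,U)$, and $f_*\colon\cF(M,U)\to\cF(M,F)$ is continuous. Since $\cF(M,\Phi)$ coincides with $f_*$, the claim follows at once.

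I do not anticipate a real obstacle here. All of the analytic work — the passage to charts, the cut-off argument turning the locally defined map into a globally smooth one on $M\times E$, and the final reduction to Axiom~(PF) via Proposition~\ref{fin-many} — has already been carried out in the proof of Lemma~\ref{PF_cont_Manifold}. The only points left to verify are the smoothness of $f$ and the pointwise identity $f_*(\gamma)=\Phi\circ\gamma$, both of which are immediate, so the proof reduces to a one-line invocation of the preceding lemma.
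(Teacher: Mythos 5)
Your proposal is correct and coincides with the paper's own proof: the paper likewise applies Lemma~\ref{PF_cont_Manifold} to the $C^\infty$-map $f\colon M\times U\to F$, $(x,y)\mapsto\Phi(y)$, noting that $\Phi\circ\gamma=f_*(\gamma)$. Your additional verifications (smoothness of $f$ and the pointwise identity) are exactly the implicit content of that one-line reduction.
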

\begin{proof}
Lemma~\ref{PF_cont_Manifold} applies as
$\Phi\circ \gamma=f_*(\gamma)$
for the $C^\infty$-map $f\colon M\times U\to F$, $(x,y)\mto\Phi(y)$.
\end{proof}
Mappings to products correspond to pairs
of mappings.
\begin{la}\label{in-product}
If $E_1$ and $E_2$ are finite-dimensional
vector spaces, we consider
the projections $\pr_j\colon E_1\times E_2\to E_j$, $(x_1,x_2)\mto x_j$
for $j\in \{1,2\}$.
Then
\[
\Xi:=(\cF(M,\pr_1),\cF(M,\pr_2))\colon \cF(M,E_1\times E_2)\to
\cF(M,E_1)\times \cF(M,E_2)
\]
is an isomorphism of topological vector spaces.
\end{la}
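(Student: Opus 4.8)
The plan is to exhibit an explicit continuous linear inverse for $\Xi$, assembled from pushforwards in the same spirit as $\Xi$ itself. Since $\pr_1$ and $\pr_2$ are linear, hence smooth, Lemma~\ref{super-mfd-cts} (applied with the open set $U:=E_1\times E_2$) shows that each $\cF(M,\pr_j)\colon \cF(M,E_1\times E_2)\to \cF(M,E_j)$ is continuous; being a post-composition with a linear map, it is also linear. Therefore $\Xi=(\cF(M,\pr_1),\cF(M,\pr_2))$ is continuous and linear.

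For the inverse I would use the inclusions $\iota_1\colon E_1\to E_1\times E_2$, $x\mto(x,0)$ and $\iota_2\colon E_2\to E_1\times E_2$, $x\mto(0,x)$, which are linear and hence smooth. By Lemma~\ref{super-mfd-cts} again (now with $U:=E_j$), the maps $\cF(M,\iota_j)\colon \cF(M,E_j)\to \cF(M,E_1\times E_2)$ are continuous and linear. I then define
\[
\Psi\colon \cF(M,E_1)\times \cF(M,E_2)\to \cF(M,E_1\times E_2),\quad (\gamma_1,\gamma_2)\mto \cF(M,\iota_1)(\gamma_1)+\cF(M,\iota_2)(\gamma_2).
\]
Because $\cF(M,E_1\times E_2)$ is a locally convex space, the right-hand side lies in $\cF(M,E_1\times E_2)$, and $\Psi$ is continuous and linear, being built from continuous linear maps together with the (continuous) addition of the target space.

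It then remains to check pointwise that $\Xi$ and $\Psi$ are mutually inverse. For $(\gamma_1,\gamma_2)$ one has $\Psi(\gamma_1,\gamma_2)(x)=(\gamma_1(x),0)+(0,\gamma_2(x))=(\gamma_1(x),\gamma_2(x))$, whence $\Xi(\Psi(\gamma_1,\gamma_2))=(\pr_1\circ\Psi(\gamma_1,\gamma_2),\pr_2\circ\Psi(\gamma_1,\gamma_2))=(\gamma_1,\gamma_2)$; conversely, for $\gamma\in\cF(M,E_1\times E_2)$ one evaluates $\Psi(\Xi(\gamma))(x)=(\pr_1(\gamma(x)),\pr_2(\gamma(x)))=\gamma(x)$, so $\Psi\circ\Xi=\id$. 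Thus $\Xi$ is a continuous linear bijection with continuous linear inverse $\Psi$, i.e., an isomorphism of topological vector spaces.

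There is no genuine obstacle here: the entire content is the observation that both $\Xi$ and its inverse are pushforwards of linear maps, so the superposition machinery of Lemma~\ref{super-mfd-cts} supplies continuity for free and only the elementary pointwise bookkeeping remains. (Alternatively one could descend to the local picture and combine \ref{easiest-prod} with the embedding of Proposition~\ref{fin-many}, but the pushforward argument is more direct and avoids choosing charts.)
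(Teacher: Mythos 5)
Your proof is correct and takes essentially the same route as the paper: the paper likewise applies Lemma~\ref{super-mfd-cts} to the (smooth, linear) projections and to the coordinate inclusions $\lambda_j$ (your $\iota_j$), and its inverse $\Theta=\cF(M,\lambda_1)\circ\pi_1+\cF(M,\lambda_2)\circ\pi_2$ is exactly your map $\Psi$. The only difference is notational, so there is nothing to add.
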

\begin{proof}
The mappings $\lambda_1\colon E_1\to E_1\times E_2$,
$x_1\mto (x_1,0)$ and $\lambda_2\colon E_2\to E_1\times E_2$, $x_2\mto (0,x_2)$
are continuous and linear, as well as $\pr_1$, $\pr_2$ and the projections
\[
\pi_j\colon \cF(M,E_1)\times \cF(M,E_2)\to\cF(M,E_j),\quad
(\gamma_1,\gamma_2)\mto \gamma_j
\]
for $j\in\{1,2\}$.
By Lemma~\ref{super-mfd-cts},
$\Xi$ and
the linear map
\[
\Theta:=\cF(M,\lambda_1)\circ\pi_1+\cF(M,\lambda_2)\circ\pi_2\colon
\cF(M,E_1)\times\cF(M,E_2)\to\cF(M,E_1\times E_2)
\]
are continuous. We readily check that $\Xi\circ \Theta$
and $\Theta\circ\Xi$ are the identity
maps, whence $\Xi$ is an isomorphism
of topological vector spaces with $\Xi^{-1}=\Theta$.
\end{proof}
\begin{rem}\label{loc-prod}
Likewise,
using Lemma~\ref{PB_loc} instead of Lemma~\ref{super-mfd-cts}
we see that
$\cF_{\loc}(U,E_1\times E_2)\cong
\cF_{\loc}(U,E_1)\times \cF_{\loc}(U,E_2)$
for all open subsets $U\sub\R^m$
and finite-dimensional vector spaces $E_1$
and~$E_2$.
\end{rem}
\begin{la}\label{prod-deco}
Let $N_1$ and $N_2$ be finite-dimensional
smooth manifolds and
$M$ be a compact $m$-dimensional
smooth manifold.
Then $\cF(M,N_1\times N_2)=\cF(M,N_1)\times \cF(M,N_2)$,
identifying functions to $N_1\times N_2$
with the pair of components.
\end{la}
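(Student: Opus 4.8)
The plan is to show that the natural bijection $\gamma\mapsto(\pr_1\circ\gamma,\pr_2\circ\gamma)$ between functions $M\to N_1\times N_2$ and pairs of functions to $N_1$ and $N_2$, where $\pr_j\colon N_1\times N_2\to N_j$ denote the projections, restricts to a bijection of $\cF(M,N_1\times N_2)$ onto $\cF(M,N_1)\times\cF(M,N_2)$. Since the asserted equality is a purely set-theoretic identity under this identification, it suffices to prove the two inclusions. The forward one is immediate: if $\gamma\in\cF(M,N_1\times N_2)$, then each $\pr_j$ is a smooth map between finite-dimensional manifolds, so Lemma~\ref{PF-between} gives $\pr_j\circ\gamma\in\cF(M,N_j)$ for $j\in\{1,2\}$.

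For the converse, suppose $\gamma_j\in\cF(M,N_j)$ and put $\gamma:=(\gamma_1,\gamma_2)\colon M\to N_1\times N_2$, which is continuous. I would verify membership via Definition~\ref{f-to-N}. Fix $x\in M$ and choose charts $\psi_j\colon A_j\to B_j\subseteq\R^{n_j}$ of $N_j$ with $\gamma_j(x)\in A_j$, so that $\psi:=\psi_1\times\psi_2$ is a chart of $N_1\times N_2$ around $\gamma(x)$ with domain $A_1\times A_2$ and range in $\R^{n_1}\times\R^{n_2}\cong\R^{n_1+n_2}$. Using the continuity of $\gamma$ together with the fact that $\cU$ is a basis, I can then select a chart $\phi\colon U_\phi\to V_\phi$ of $M$ with $V_\phi\in\cU$, $x\in U_\phi$ and $\gamma(U_\phi)\subseteq A_1\times A_2$. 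Under the identification $\R^{n_1+n_2}\cong\R^{n_1}\times\R^{n_2}$ one has $\psi\circ\gamma\circ\phi^{-1}=(\psi_1\circ\gamma_1\circ\phi^{-1},\psi_2\circ\gamma_2\circ\phi^{-1})$.

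The two coordinate blocks are then controlled separately. Since $\gamma_j\in\cF(M,N_j)$ and $\gamma_j(U_\phi)\subseteq A_j$, Lemma~\ref{charact_F(M,E)} yields $\psi_j\circ\gamma_j\circ\phi^{-1}\in\cF_{\loc}(V_\phi,\R^{n_j})$. By the product decomposition of $\cF_{\loc}$ recorded in Remark~\ref{loc-prod}, the pair lies in $\cF_{\loc}(V_\phi,\R^{n_1})\times\cF_{\loc}(V_\phi,\R^{n_2})\cong\cF_{\loc}(V_\phi,\R^{n_1+n_2})$, so $\psi\circ\gamma\circ\phi^{-1}\in\cF_{\loc}(V_\phi,\R^{n_1+n_2})$. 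To land in $\cF$ rather than $\cF_{\loc}$, as Definition~\ref{f-to-N} demands, I restrict to a basic open set: picking $W\in\cU$ with $\phi(x)\in W$ and $W$ relatively compact in $V_\phi$, the chart $\phi|_{\phi^{-1}(W)}$ has range $W\in\cU$ and satisfies $\psi\circ\gamma\circ(\phi|_{\phi^{-1}(W)})^{-1}=(\psi\circ\gamma\circ\phi^{-1})|_W\in\cF(W,\R^{n_1+n_2})$ by the definition of $\cF_{\loc}$. These charts witness $\gamma\in\cF(M,N_1\times N_2)$ at $x$, and as $x$ was arbitrary we are done.

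I expect the only genuine subtlety to be that an arbitrary chart of $N_1\times N_2$ need not be a product of charts, so the universal ``for each chart'' form of Lemma~\ref{charact_F(M,E)} cannot be applied directly to deduce membership. The remedy is to work with the specially chosen product charts $\psi_1\times\psi_2$ and invoke the existential characterization (equivalently, to return to Definition~\ref{f-to-N}, whose verification requires suitable charts only in a neighbourhood of each point). Everything else is the routine bookkeeping of reassembling the two coordinate blocks, for which Remark~\ref{loc-prod} supplies the essential product splitting.
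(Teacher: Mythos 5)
Your proof is correct and follows essentially the same route as the paper: the forward inclusion via Lemma~\ref{PF-between} applied to the projections, and the converse by forming the product chart $\psi_1\times\psi_2$, invoking Lemma~\ref{charact_F(M,E)} componentwise together with the splitting of Remark~\ref{loc-prod}, and then restricting to a relatively compact $W\in\cU$ to return from $\cF_{\loc}$ to $\cF$ as required by Definition~\ref{f-to-N}.
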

\begin{proof}
For $j\in\{1,2\}$,
the map $\pi_j\colon N_1\times N_2\to N_j$, $(x_1,x_2)\mto x_j$
is smooth. Hence, if $\gamma\in \cF(M,N_1\times N_2)$,
then $\pi_j\circ \gamma\in \cF(M,N_j)$,
by Lemma~\ref{PF-between}.
Conversely, let $\gamma_j\in\cF(M,N_j)$
for $j\in\{1,2\}$. Then the map
$\gamma:=(\gamma_1,\gamma_2)\colon$
$M\to N_1\times N_2$ is
continuous.
For $x\in M$,
there are charts $\psi_j\colon U_j\to V_j\sub \R^{n_j}$
of $N_j$ for $j\in \{1,2\}$
such that $\gamma_j(x)\in U_j$.
There exists a chart $\phi\colon U\to V\sub\R^m$
of~$M$ such that $x\in U$ and $\gamma(U)\sub U_1\times U_2$.
There exists $W\in \cU$
such that $\phi(x)\in W$ and
$W$ is relatively compact in~$V$.
Then $h:=(\psi_1\times\psi_2)
\circ\gamma\circ \phi^{-1}\in\cF_{\loc}(V,\R^{n_1}\times\R^{n_2})$
(using Remark~\ref{loc-prod}),
as its components
$\psi_j\circ \gamma_j\circ\phi^{-1}$ are in $\cF_{\loc}(V,\R^{n_j})$,
by Lemma~\ref{charact_F(M,E)}.
Thus $h|_W\in \cF(W,\R^{n_1}\times\R^{n_2})$.
As a consequence, $\gamma\in \cF(M,N_1\times N_2)$.
\end{proof}
We need a more technical variant of the above maps.
\begin{la}\label{for-Ad}
Let $E$ and $F$ be finite-dimensional vector spaces,
$N$ be a finite-dimensional smooth manifold,
$M$ be a  compact $m$-dimensional
smooth manifold,
$g\colon N\times E\to F$ be a $C^\infty$-map
and $\gamma\in \cF(M,N)$.
Define $f\colon M\times E\to F$ via
$f(x,y):=g(\gamma(x),y)$.
Then $f_*(\eta):=f\circ (\id_M,\eta)=g\circ (\gamma,\eta)\in\cF(M,F)$
for each $\eta\in \cF(M,E)$ and the map
$f_*\colon \cF(M,E)\to\cF(M,F)$
is continuous.
\end{la}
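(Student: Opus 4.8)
The plan is to reduce the statement to the already-established Lemma~\ref{PF_cont_Manifold} by absorbing the fixed function $\gamma\in\cF(M,N)$ into the manifold factor. The essential obstacle is that $g$ is defined on $N\times E$ with $N$ an abstract manifold, whereas Lemma~\ref{PF_cont_Manifold} requires a smooth map defined on $M\times U$ with $M$ our fixed compact source manifold. Since $\gamma$ is not assumed smooth (only $\cF$), I cannot simply compose $g$ with $(\gamma,\id)$ and invoke smoothness of the composite directly; the point of the argument must be that $f$ itself, given by $f(x,y)=g(\gamma(x),y)$, need not be smooth in~$x$, so Lemma~\ref{PF_cont_Manifold} does \emph{not} apply to $f$ as stated. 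Instead I would work locally on~$M$, where $\gamma$ looks like an $\cF_\loc$-map into a chart of~$N$, and there the $N$-variable of $g$ can be replaced by a Euclidean variable.

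First I would fix charts $\phi_j\colon U_j\to V_j\sub\R^m$ for~$M$ and sets $W_j\in\cU$ relatively compact in~$V_j$ with $M=\bigcup_{j=1}^k\phi_j^{-1}(W_j)$, as in Proposition~\ref{fin-many}, chosen fine enough that for each~$j$ there is a chart $\psi_j\colon A_j\to B_j\sub\R^n$ of~$N$ with $\gamma(\phi_j^{-1}(\wb{W}_j))\sub A_j$; this is possible by compactness of each $\phi_j^{-1}(\wb{W}_j)$ and continuity of~$\gamma$. By Definition~\ref{good-coll}\,(c) I would pick $Y_j\in\cU$ relatively compact in~$V_j$ with $\wb{W}_j\sub Y_j$ and $\gamma(\phi_j^{-1}(\wb{Y}_j))\sub A_j$. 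Writing $\beta_j:=\psi_j\circ\gamma\circ\phi_j^{-1}\in\cF_\loc(V_j,\R^n)$ (by Lemma~\ref{charact_F(M,E)}), the key local rewriting is
\[
f(\phi_j^{-1}(x),y)=g(\gamma(\phi_j^{-1}(x)),y)
=(g\circ(\psi_j^{-1}\times\id_E))(\beta_j(x),y)
\]
for $x\in Y_j$, $y\in E$, where $g\circ(\psi_j^{-1}\times\id_E)\colon B_j\times E\to F$ is smooth.

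The heart of the argument is then to express each local piece as a pushforward covered by our earlier lemmas. Setting $G_j:=g\circ(\psi_j^{-1}\times\id_E)\colon B_j\times E\to F$ and $\beta_j|_{Y_j}\in\cF(Y_j,\R^n)$, the map $(x,y)\mto G_j(\beta_j(x),y)$ is exactly of the form treated in Axiom~(PF) once the frozen argument $\beta_j$ is built in: applying Axiom~(PF) to the smooth map $(x,(z,y))\mto G_j(z,y)$ on $Y_j\times(\R^n\times E)$ and feeding in the pair $(\beta_j|_{Y_j},\eta\circ\phi_j^{-1}|_{Y_j})$ yields $f_*(\eta)\circ\phi_j^{-1}|_{W_j}\in\cF(W_j,F)$, with continuous dependence on $\eta\circ\phi_j^{-1}|_{Y_j}$, hence on~$\eta$ by \ref{the-ini-top}. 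Since this holds for every~$j$ and $M=\bigcup_j\phi_j^{-1}(W_j)$, Proposition~\ref{fin-many} gives $f_*(\eta)\in\cF(M,F)$ and the continuity of $f_*\colon\cF(M,E)\to\cF(M,F)$. The only delicate bookkeeping is verifying that feeding a fixed $\cF$-section into one slot of a pushforward stays within the scope of Axiom~(PF); this is handled precisely by the mechanism already used in Lemma~\ref{pf-locloc} and is where I would concentrate the care.
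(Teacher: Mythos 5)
Your plan is correct and is essentially the paper's own proof: the paper likewise chooses the charts of $M$ fine enough that $\gamma$ maps each chart domain into a chart domain $A_j$ of $N$, multiplies $g\circ(\psi_j^{-1}\times\id_E)$ by a cutoff $\xi_j$ supported in $B_j$ and equal to $1$ on the relevant compact image so as to obtain a globally defined smooth map $g_j\colon\R^n\times E\to F$, feeds the pair $(\psi_j\circ\gamma\circ\phi_j^{-1},\,\eta\circ\phi_j^{-1})$ into the superposition with $g_j$, and concludes membership and continuity via Proposition~\ref{fin-many}, exactly as you propose. The only cosmetic difference is that the paper packages the superposition step as Lemma~\ref{PB_loc} on $\cF_{\loc}(V_j,\R^n\times E)$ rather than invoking Axiom (PF) directly with a dummy source variable (which is how Lemma~\ref{PB_loc} is itself deduced from (PF) via Lemma~\ref{Consequences_F}), and the cutoff extension you defer to the mechanism of Lemma~\ref{pf-locloc} is precisely the step the paper carries out with~$\xi_j$ --- it is needed because $g\circ(\psi_j^{-1}\times\id_E)$ is only defined on $B_j\times E$, whereas (PF) requires a smooth map on all of $Y_j\times(\R^n\times E)$.
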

\begin{proof}
Since~$M$ is compact,
we find $k\in\N$ and charts $\phi_j\colon U_j\to V_j\sub\R^m$
of $M$ for $j\in\{1,\ldots, k\}$,
and relatively compact subsets $W_j\sub V_j$
such that $W_j\in\cU$
and $M=\bigcup_{j=1}^kQ_j$ with $Q_j:=\phi_j^{-1}(W_j)$,
such that $\gamma(U_j)\sub P_j$
for some chart $\psi_j\colon A_j\to B_j\sub\R^n$
of~$N$
and a relatively compact, open subset $P_j\sub A_j$.
Let $\xi_j\colon B_j\to\R$ be a smooth map with compact support
$K_j:=\Supp(\xi_j)$ such that $\xi_j|_{\psi_j(P_j)}=1$.
Then $g_j(x,y):=\xi_x(x)f(\psi_j^{-1}(x),y)$
for $(x,y)\in B_j\times E$,
$g_j(x,y):=0$ for $(x,y)\in (\R^n\setminus K_j)\times E$
defines a smooth map $g_j\colon \R^n\times E\to F$.
By Lemma~\ref{PB_loc}, the map
\[
\cF_{\loc}(V_j,g_j)\colon \cF_{\loc}(V_j,\R^n\times E)\to
\cF_{\loc}(V_j,F),\quad \theta\mto g_j\circ\theta
\]
is continuous. Identifying $\cF_{\loc}(V_j,\R^n\times E)$
with $\cF_{\loc}(V_j,\R^n)\times\cF_{\loc}(V_j,E)$,
we deduce from Lemma~\ref{charact_F(M,E)} that
\begin{eqnarray*}
(g\circ (\gamma,\eta))\circ\phi_j^{-1}|_{W_j}
&=& (g_j\circ (\psi_j\circ\gamma\circ\phi_j^{-1},\eta\circ \phi_j^{-1}))|_{W_j}\\
&= & (\rho^F_{W_j,V_j}\circ \cF_{\loc}(V_j,g_j))(\psi_j\circ \gamma\circ\phi_j^{-1},
\eta\circ\phi_j^{-1})
\end{eqnarray*}
is in $\cF(W_j,F)$.
Hence $g\circ (\gamma,\eta)\in \cF(M,F)$.
Now $h_j\colon \cF(M,E)\to\cF_{\loc}(V_j,E)$,
$\eta\mto\eta\circ\phi_j^{-1}$
is continuous by definition
of the topology on $\cF(M,E)$.
Using Lemma~\ref{PB_loc},
we deduce that the map
$\cF(M,E)\to\cF(W_j,F)$,
\[
\eta\mto f_*(\eta)\circ\phi_j^{-1}|_{W_j}
=(\rho^F_{W_j,V_j}\circ \cF_{\loc}(V_j,g_j))(\psi_j\circ \gamma\circ\phi_j^{-1},
h_j(\eta))
\]
is continuous. Hence $f_*$ is continuous, by Proposition~\ref{fin-many}.
\end{proof}
\begin{prop}\label{PF_smooth_Manifold}
If $M$ is an $m$-dimensional compact smooth manifold,
$E$ and~$F$ are finite-dimensional vector spaces, $U$ is an open subset of $E$ and
$f\colon M\times U\rightarrow F$ is a $C^\infty$-map, then also the map $f_*\colon
\mathcal{F}(M,U)\rightarrow \mathcal{F}(M,F)$, $\gamma\mapsto f\circ(\id_M,\gamma)$
is smooth.
\end{prop}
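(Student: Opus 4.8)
The plan is to prove, by induction on $k\in\N_0$, the statement that $f_*$ is $C^k$ for \emph{every} choice of finite-dimensional $E$, $F$, every open $U\sub E$, and every smooth $f\colon M\times U\to F$. The case $k=0$ is exactly Lemma~\ref{PF_cont_Manifold}. For the inductive step it suffices to show that $f_*$ is $C^1$ with
\[
d(f_*)(\gamma,\eta)=h_*(\gamma,\eta),
\]
where $h\colon M\times(U\times E)\to F$, $h(x,(y,v)):=d_2f(x,y,v)$, is built from the partial differential $d_2f$ of $f$ in its second variable (a smooth map on the open set $U\times E\sub E\times E$), and where we identify $\cF(M,U\times E)$ with the open subset $\cF(M,U)\times\cF(M,E)$ of $\cF(M,E\times E)$ by Lemmas~\ref{in-product} and~\ref{prod-deco}. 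Granting this, the inductive hypothesis applied to the smooth map~$h$ shows that $d(f_*)=h_*$ is $C^k$, and hence $f_*$ is $C^{k+1}$; as this holds for all $k$, the map $f_*$ is smooth.

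To establish the $C^1$-claim, fix $\gamma\in\cF(M,U)$ and $\eta\in\cF(M,E)$. Since $\gamma(M)$ is compact in the open set~$U$ and $\eta(M)$ is compact, there is $\ve>0$ with $\gamma(x)+st\eta(x)\in U$ for all $x\in M$, $s\in[0,1]$ and $|t|<\ve$. I would encode all difference quotients at once by the single \emph{smooth} map
\[
F^{[1]}\colon M\times\wt\Omega\to F,\qquad
F^{[1]}(x,y,v,t):=\int_0^1 d_2f(x,y+stv,v)\,ds,
\]
defined on the open subset $\wt\Omega:=\{(y,v,t)\in U\times E\times\R\colon y+stv\in U\ \text{for all}\ s\in[0,1]\}$ of $E\times E\times\R$; its smoothness is the standard smooth dependence on parameters of an integral of a smooth map over the compact interval~$[0,1]$. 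By the fundamental theorem of calculus, $F^{[1]}(x,y,v,t)=\frac1t(f(x,y+tv)-f(x,y))$ for $t\neq0$, while $F^{[1]}(x,y,v,0)=d_2f(x,y,v)$.

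Next I would insert constant functions into the last slot. Let $c_t\colon M\to\R$ denote the constant function of value~$t$; it lies in $\cF(M,\R)$ (e.g.\ as the pushforward of the constant smooth map $M\times E\to\R$, $(x,y)\mto t$). For $|t|<\ve$ the triple $(\gamma,\eta,c_t)$ corresponds to an element of $\cF(M,\wt\Omega)$, and by construction
\[
\tfrac1t\bigl(f_*(\gamma+t\eta)-f_*(\gamma)\bigr)=(F^{[1]})_*(\gamma,\eta,c_t)\qquad(0<|t|<\ve).
\]
Since $t\mto c_t=t\,c_1$ is linear and hence continuous into $\cF(M,\R)$, the curve $t\mto(\gamma,\eta,c_t)$ is continuous into $\cF(M,\wt\Omega)$; composing it with the continuous map $(F^{[1]})_*$ (continuous by Lemma~\ref{PF_cont_Manifold}) and letting $t\to0$ shows that the directional derivative of $f_*$ at~$\gamma$ in the direction~$\eta$ exists and equals $(F^{[1]})_*(\gamma,\eta,c_0)=h_*(\gamma,\eta)$. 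Finally, $d(f_*)=h_*$ is continuous on $\cF(M,U)\times\cF(M,E)$ by Lemma~\ref{PF_cont_Manifold} applied to~$h$, so $f_*$ is $C^1$ and the induction closes.

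The main obstacle is the $C^1$-step, and within it the delicate point is that the directional derivative must be shown to exist \emph{in the topology of $\cF(M,F)$}, not merely pointwise, and to carry the expected value. The device that removes this obstacle is to replace the whole family of difference quotients by the \emph{single} smooth map $F^{[1]}$, and then to exploit the already-available continuity of pushforward operators (Lemma~\ref{PF_cont_Manifold}) to pass to the limit $t\to0$; the remaining steps are routine bookkeeping with the product identifications of Lemmas~\ref{in-product} and~\ref{prod-deco}.
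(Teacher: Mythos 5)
Your proof is correct, and while its skeleton (induction on $k$, the formula $d(f_*)=(d_2f)_*$ modulo the identification $\cF(M,E\times E)\cong\cF(M,E)\times\cF(M,E)$, and continuity of the latter closing the induction) coincides with the paper's, your treatment of the crucial $C^1$-step is genuinely different. The paper forms the \emph{vector-valued weak integral} $h(t):=\int_0^1 d_2f\circ(\id_M,\gamma+st\eta,\eta)\,ds$ inside $\cF(M,F)$ — which is exactly where the standing hypothesis of integral completeness enters — and then identifies $h(t)$ with the difference quotient $\Delta(t)$ by applying point evaluations, the pointwise Mean Value Theorem, and the fact that evaluations separate points; continuity of parameter-dependent integrals then gives $\Delta(t)\to h(0)$. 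You instead absorb the integration into the \emph{finite-dimensional} divided-difference map $F^{[1]}(x,y,v,t)=\int_0^1 d_2f(x,y+stv,v)\,ds$ on $M\times\wt{\Omega}$, smooth by classical differentiation under the integral sign, and realize all difference quotients at once as the composition of the continuous pushforward $(F^{[1]})_*$ (Lemma~\ref{PF_cont_Manifold}) with the continuous curve $t\mto(\gamma,\eta,c_t)$ of constant insertions; letting $t\to 0$ then needs nothing beyond continuity. This buys something concrete: your argument never uses integral completeness of $\cF(M,F)$, so the proposition follows from the axioms (PF), (PB), (GL), (MU) alone, in the spirit of the $f^{[1]}$-calculus of \cite{BGN}. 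What the paper's route buys is economy under its standing hypotheses: integral completeness is assumed anyway, and it avoids introducing the auxiliary open set $\wt{\Omega}$, checking its openness, verifying smoothness of $F^{[1]}$, and justifying that constant functions lie in $\cF(M,\R)$ — all small points, but ones you do need (and do correctly supply, e.g.\ constants as pushforwards of constant maps applied to $0\in\cF(M,E)$).
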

\begin{proof}
By Lemma~\ref{PF_cont_Manifold}, the map~$f_*$ is continuous.
We show by induction on $k\in\N$ that $f_*$
is $C^k$ for all $E$, $F$, $U$ and $f$
as in the proposition.
To see that $f_*$ is $C^1$,
let $\gamma\in \mathcal{F}(M,U)$
and $\eta\in \mathcal{F}(M,E)$.
We claim that the directional derivative $d(f_*)(\gamma,\eta)$
exists
and
\[
d(f_*)(\gamma,\eta)(x)=d_2f(x,\gamma(x),\eta(x))
\]
holds for all $x\in M$;
here $d_2f\colon M\times U\times E\to F$
is the smooth mapping\linebreak
$(x,y,z)\mto d(f_x)(y,z)$
with $f_x:=f(x,\cdot)\colon U\to F$.
If this is true, then
\[
d(f_*)(\gamma,\eta)=(d_2f)_*(\gamma,\eta)
\]
if we identify $\cF(M,E)\times \cF(M,E)$
with
$\cF(M,E\times E)$ by means of the isomorphism
of topological vector spaces (and hence $C^\infty$-diffeomorphism)
described in Lemma~\ref{in-product}.
The map $(d_2f)_*\colon \cF(M,U\times E)\to\cF(M,F)$
being continuous, $f$ is~$C^1$.
If $k\geq 2$, then $(d_2f)_*$
is $C^{k-1}$ by
induction and thus~$f$
is~$C^k$.\\[2.3mm]
Proof of the claim.
As $\gamma(M)$ and $\eta(M)$ are compact in $U$ and $E$, respectively, there is $\ve>0$
with
$\gamma(M)+\,]{-\ve},\ve[\, \eta(M)\subseteq U$.
The map $(d_2f)_*$ being continuous, also
\[
g\colon [0,1]\times \,]{-\ve},\ve[\,\to \cF(M,F),\quad
(s,t)\mto d_2f\circ (\id_M,\gamma+st\eta,\eta)
\]
is continuous. Since $\cF(M,F)$ is integral complete,
the weak integral
\[
h(t):=\int_0^1 g(s,t)\,ds
\]
exists in $\cF(M,F)$ for all $t\in \;]{-\ve},\ve[$.
By continuity of parameter-dependent integrals
(see \cite[Lemma~1.1.11]{GaN}),
$h\colon ]{-\ve},\ve[\;\to\cF(M,F)$
is continuous.
For $t\in \;]{-\ve},\ve[\,\setminus\{0\}$,
consider the difference quotient
\[
\Delta(t):=\frac{1}{t}(f_*(\gamma+t\eta)-f_*(\gamma)).
\]
For $x\in M$, let $\ev_x\colon \cF(M,F)\to F$
be the continuous linear point evaluation
at~$x$. Since
weak integrals and continuous linear maps commute \cite[Exercise 1.1.3\,(a)]{GaN},
using the Mean value Theorem \cite[Proposition 1.2.6]{GaN}
we see that
\begin{eqnarray*}
\ev_x(\Delta(t)) &= &\Delta(t)(x)=
\frac{1}{t}\big(f(x,\gamma(x)+t\eta(x))-f(x,\gamma(x))\big)\\
&=& \int_0^1 d_2f(x,\gamma(x)+st\eta(x),\eta(x))\,ds\\
&=& \ev_x\left(\int_0^1 d_2f\circ (\id_M,\gamma+st\eta,\eta)\,ds\right)
=\ev_x(h(t)).
\end{eqnarray*}
As the point evaluations separate points, we deduce that
$\Delta(t)=h(t)$, which converges to
$h(0)=\int_0^1d_2f\circ (\id_M,\gamma,\eta)\, ds=d_2f\circ(\id_M,\gamma,\eta)$
as $t\to 0$.
\end{proof}
Setting $f(x,y):=\Phi(y)$, we deduce:
\begin{cor}\label{left_comp_smooth_Manifold}
If $M$ is an $m$-dimensional compact smooth
manifold, $E$ and~$F$ are finite-dimensional vector spaces,
$U$ is an open subset of~$E$ and $\Phi\colon U\to F$
is smooth, then also the map $\cF(M,\Phi)\colon
\mathcal{F}(M,U)\rightarrow \mathcal{F}(M,F)$, $\gamma\mapsto\Phi\circ\gamma $
is smooth. $\,\square$
\end{cor}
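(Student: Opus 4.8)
The plan is to obtain this as an immediate special case of Proposition~\ref{PF_smooth_Manifold}. That proposition treats pushforwards $f_*$ along an arbitrary smooth map $f\colon M\times U\to F$, and composition with $\Phi$ on the left is precisely the pushforward along a map $f$ that is constant in its $M$-variable. So the entire argument reduces to identifying $\cF(M,\Phi)$ with such an $f_*$ and invoking the proposition.

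Concretely, I would set $f\colon M\times U\to F$, $f(x,y):=\Phi(y)$. Being the composition of $\Phi$ with the smooth projection $M\times U\to U$, $(x,y)\mto y$, the map $f$ is $C^\infty$. For every $\gamma\in\cF(M,U)$ and $x\in M$ one has $f_*(\gamma)(x)=f(x,\gamma(x))=\Phi(\gamma(x))$, whence $f_*(\gamma)=\Phi\circ\gamma=\cF(M,\Phi)(\gamma)$; in particular $\Phi\circ\gamma\in\cF(M,F)$. Proposition~\ref{PF_smooth_Manifold} now yields that $f_*\colon\cF(M,U)\to\cF(M,F)$ is smooth, and therefore so is $\cF(M,\Phi)=f_*$.

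There is no genuine obstacle to overcome: all the analytic work has already been carried out in Proposition~\ref{PF_smooth_Manifold}, and the only points left to check are the (routine) smoothness of the auxiliary map $f$, which simply ignores the $M$-coordinate, and the pointwise identity $f_*=\cF(M,\Phi)$.
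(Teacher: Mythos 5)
Your proposal is correct and is exactly the paper's argument: the paper proves Corollary~\ref{left_comp_smooth_Manifold} by the single remark ``Setting $f(x,y):=\Phi(y)$, we deduce:'' before the statement, i.e.\ by specializing Proposition~\ref{PF_smooth_Manifold} to the map $f\colon M\times U\to F$ constant in the $M$-variable, just as you do. Your write-up merely spells out the routine verifications (smoothness of $f$ and the identity $f_*=\cF(M,\Phi)$) that the paper leaves implicit.
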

\begin{numba}
If $E$ is a finite-dimensional complex vector space with $\C$-basis
$b_1,\ldots, b_n$, then $b_1,\ldots,b_n, ib_1,\ldots, i b_n$
is an $\R$-basis for~$E$ and $E=F\oplus iF$
as a real vector space, using the real span $F$ of
$b_1,\ldots,b_n$.
For each $U\in\cU$, we then have
\[
\cF(U,E)=\cF(U,F)\oplus i\cF(U,F)
\]
as a real vector space and we easily
check that the operation
\[
\C\times \cF(U,E)\to\cF(U,E), \;\;
(t+is)(\gamma+i\eta):=
(t\gamma-s\eta)+i(s\gamma+t\eta)
\]
makes $\cF(U,E)$
a complex locally convex space.
As in the real case,
the complex topological vector space
structure is independent of the
basis.
\end{numba}
\begin{numba}
If $E$ is a finite-dimensional complex vector space,
then the
mapings\linebreak
$\cF(\phi^{-1}|_W,E)\to\cF(W,E)$
are complex linear in the situation of \ref{the-ini-top}.
Hence~\ref{the-ini-top}
provides a complex locally convex vector space
structure on $\cF(M,E)$.
\end{numba}
\begin{cor}\label{superpo-cx}
If $E$ and $F$ are $\K$-vector spaces
for $\K\in\{\R,\C\}$
in the situation of Corollary~{\rm\ref{left_comp_smooth_Manifold}}
and $\Phi$ is $\K$-analytic,
then also the mapping\linebreak
$\cF(M,\Phi)\colon \cF(M,U)\to\cF(M,F)$,
$\gamma\mto\Phi\circ \gamma$ is $\K$-analytic.
\end{cor}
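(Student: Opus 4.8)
The plan is to treat the two values of $\K$ separately, in both cases building on the smoothness already established in Corollary~\ref{left_comp_smooth_Manifold}.

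First I would handle $\K=\C$. A $\C$-analytic map $\Phi$ is in particular $C^\infty_\R$, so Corollary~\ref{left_comp_smooth_Manifold} guarantees that $\cF(M,\Phi)$ is $C^\infty_\R$. By the characterization in~\ref{real-to-c}, it then suffices to show that the differential $d(\cF(M,\Phi))(\gamma)\colon \cF(M,E)\to\cF(M,F)$ is $\C$-linear for each $\gamma\in\cF(M,U)$. Applying Proposition~\ref{PF_smooth_Manifold} to $f(x,y):=\Phi(y)$, the directional derivative is given pointwise by $d(\cF(M,\Phi))(\gamma)(\eta)(x)=\Phi'(\gamma(x))(\eta(x))$ for all $x\in M$. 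Since $\Phi$ is $\C$-analytic, each $\Phi'(\gamma(x))\colon E\to F$ is $\C$-linear by~\ref{real-to-c}; hence for $\lambda\in\C$ we have $d(\cF(M,\Phi))(\gamma)(\lambda\eta)(x)=\Phi'(\gamma(x))(\lambda\eta(x))=\lambda\,\Phi'(\gamma(x))(\eta(x))$. As point evaluations separate points on $\cF(M,F)$, this yields $d(\cF(M,\Phi))(\gamma)(\lambda\eta)=\lambda\,d(\cF(M,\Phi))(\gamma)(\eta)$, so the differential is $\C$-linear and $\cF(M,\Phi)$ is complex analytic.

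Next I would handle $\K=\R$. By definition of real analyticity, $\Phi$ admits a complex analytic extension $g\colon W\to F_\C$ on an open subset $W\sub E_\C$ with $U\sub W$ and $g|_U=\Phi$. The idea is to transport this extension to the mapping level. Using Lemma~\ref{in-product} together with the decomposition $E_\C=E\oplus iE$, I would identify $\cF(M,E_\C)$ with the complexification $(\cF(M,E))_\C=\cF(M,E)\oplus i\,\cF(M,E)$ as a complex locally convex space, and likewise $\cF(M,F_\C)\cong(\cF(M,F))_\C$. Under these identifications, $\cF(M,W)$ is an open subset of $(\cF(M,E))_\C$ containing $\cF(M,U)$, and the map $\cF(M,g)\colon\cF(M,W)\to\cF(M,F_\C)$ restricts to $\cF(M,\Phi)$ on $\cF(M,U)$. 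Since $g$ is $\C$-analytic, the case $\K=\C$ just proved shows that $\cF(M,g)$ is complex analytic; it is thus a complex analytic extension of $\cF(M,\Phi)$, so $\cF(M,\Phi)$ is real analytic.

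The routine smoothness is already in hand, so the main obstacle is the bookkeeping in the real-analytic case: verifying that the identifications $\cF(M,E_\C)\cong(\cF(M,E))_\C$ and $\cF(M,F_\C)\cong(\cF(M,F))_\C$ are isomorphisms of \emph{complex} locally convex spaces, so that $\cF(M,g)$ genuinely lives on an open subset of the complexified model space and is $\C$-analytic there, and that these identifications are compatible with the inclusion $\cF(M,U)\hookrightarrow\cF(M,W)$ and with $\cF(M,g)|_{\cF(M,U)}=\cF(M,\Phi)$. Concretely this reduces to checking that the complex structure placed on $\cF(M,E_\C)$ for complex $E$ agrees with the one obtained by complexifying the real space $\cF(M,E)$, which follows from the basis-independent description of both structures.
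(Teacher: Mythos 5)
Your proof is correct and takes essentially the same route as the paper: for $\K=\C$ the paper likewise uses the smoothness of $f_*$ for $f(x,y):=\Phi(y)$, the pointwise formula $d(f_*)(\gamma,\eta)=(d\Phi)\circ(\gamma,\eta)$, and the criterion in~\ref{real-to-c}; for $\K=\R$ it likewise transports a complex analytic extension $g$ of $\Phi$ to the mapping level via the identification $\cF(M,E_\C)=\cF(M,E)_\C$. The only difference is that you spell out the bookkeeping behind that identification, which the paper treats as understood from its basis-independent construction of the complex structure on $\cF(M,E)$.
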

\begin{proof}
If $\K=\C$,
define $f\colon M\times U\to F$ via $f(x,y):=\Phi(y)$.
We know that $\cF(M,\Phi)=f_*$
is smooth over~$\R$
with directional derivatives
$d(f_*)(\gamma,\eta)=(df_2)_*(\gamma,\eta)=(d\Phi)\circ (\gamma,\eta)$.
As the latter are complex linear in $\eta$
for fixed~$\gamma$,
the map $f_*$ is complex analytic by~\ref{real-to-c}.\\[2mm]
If $\K=\R$, then $\Phi$ has a $\C$-analytic
extension
$g\colon V\to F_\C$
for some open subset $V\sub E_\C$ with $U\sub V$.
Since $\cF(M,g)$ is a $\C$-analytic extension
for $\cF(M,\Phi)$ which is defined on an open subset
in $\cF(M,E_\C)=\cF(M,E)_\C$ and takes values in
$\cF(M,F_\C)=\cF(M,F)_\C$,
the map $\cF(M,\Phi)$ is $\R$-analytic.
\end{proof}
\section{The Lie groups {\boldmath$\mathcal{F}(M,G)$}}\label{sec-groups}
To prove Proposition~\ref{fi-prop},
let $m$, $\cU$, and $(\cF(U,\R))_{U\in\cU}$
be as in Section~\ref{cons-ax}.
Let $M$ be a compact $C^\infty$-manifold of dimension~$m$
and $G$ be a finite-dimensional Lie group
over $\K\in\{\R,\C\}$,
with Lie algebra $\cg$.
Let $\mu_G\colon G\times G\to G$
be the group multiplication
and $\eta_G\colon G\to G$, $g\mto g^{-1}$
be the inversion map.
Then $\cF(M,G)$
is a subgroup of the group $G^M$
of all mappings $M\to G$.
In fact,
\[
\gamma_1\gamma_2:=\mu_G\circ (\gamma_1,\gamma_2)=\cF(M,\mu_G)(\gamma_1,\gamma_2)\in\cF(M,G)
\]
for all $\gamma_1,\gamma_2\in \cF(M,G)$,
by Lemmas~\ref{PF-between} and \ref{prod-deco}.
Likewise, $\gamma^{-1}:=\eta_G\circ\gamma=\cF(M,\eta_G)(\gamma)\in\cF(M,G)$
for each $\gamma\in \cF(M,G)$, by Lemma~\ref{PF-between}.
We now give $\cF(M,G)$ a $\K$-analytic manifold
structure as described in Proposition~\ref{fi-prop}.\\[2.3mm]
There exists a balanced open $0$-neighbourhood
$Q\sub \cg$ such that $P:=\exp_G(Q)$ is open
in~$G$ and $\phi:=\exp_G|_Q^P\colon Q\to P$
is a $\K$-analytic diffeomorphism.
There exists a balanced open $0$-neighbourhood
$V\sub Q$ such that $U:=\exp_G(V)$
satisfies $UU\sub P$.
Since $V=-V$, we have $U=U^{-1}$.
Lemma~\ref{PF-between}
implies that $\phi\circ \gamma\in \cF(M,P)$
for each $\gamma\in\cF(M,Q)$
and that
\[
\Theta:=\cF(M,\phi)\colon \cF(M,Q)\to\cF(M,P),\quad \gamma\mto \phi\circ\gamma
\]
is a bijection (with inverse $\eta\mto \phi^{-1}\circ\eta$).
We give $\cF(M,P)$
the $\K$-analytic manifold structure
modelled on $\cF(M,\cg)$
which turns $\cF(M,\phi)$
into a $\K$-analytic diffeomorphism.
Then $\cF(M,U)$ is open in $\cF(M,P)$,
as $\cF(M,V)$ is open in $\cF(M,Q)$.
Since $f\colon U\times U\to P$, $(x,y)\mto xy^{-1}$
is $\K$-analytic, also
\[
g\colon V\times V\to Q,\quad (x,y)\mto\phi^{-1}(\phi(x)\phi(y)^{-1})
\]
is $\K$-analytic, whence
\[
\cF(M,g)\colon \cF(M,V\times V)=\cF(M,V)\times\cF(M,V)
\to \cF(M,Q)
\]
is $\K$-analytic, by Corollary~\ref{superpo-cx}.
As
\[
\cF(M,f)=\Theta\circ \cF(M,g)\circ (\Theta^{-1}\times\Theta^{-1})|_{\cF(M,U)
\times\cF(M,U)},
\]
also the map $\cF(M,f)$ is $\K$-analytic,
which takes $(\gamma,\eta)\in\cF(M,U)\times \cF(M,U)$
to $f\circ(\gamma,\eta)=\gamma\eta^{-1}\in\cF(M,P)$.
We now use that the adjoint action
$\Ad\colon G\times \cg\to \cg$, $(g,y)\mto\Ad_g(y)$
is smooth. Given $\gamma\in \cF(M,G)$,
consider the inner automorphism
$I_\gamma\colon \cF(M,G)\to\cF(M,G)$, $\eta\mto\gamma\eta\gamma^{-1}$.
We deduce with Lemma~\ref{for-Ad} that
$\Ad\circ(\gamma,\eta)\in \cF(M,\cg)$
for all $\eta\in\cF(M,\cg)$ and that the linear map
\[
\beta \colon \cF(M,\cg)\to\cF(M,\cg),\quad \eta\mto \Ad\circ \hspace*{.3mm}(\gamma,\eta)
\]
is continuous (and hence $\K$-analytic).
Thus
$W:=\beta^{-1}(\cF(M,V))\cap \cF(M,V)$ is an open $0$-neighbourhood
in $\cF(M,V)$ such that $\beta(W)\sub \cF(M,V)$.
Also,
$\Theta(W)$ is open in $\cF(M,P)$.
As $\gamma(x)\exp_G(\eta(x))\gamma(x)^{-1}=\exp_G(\Ad_{\gamma(x)}(\eta(x)))$
for all $\eta\in W$ and $x\in M$, we have
\[
I_\gamma\circ \Theta|_W=\Theta\circ\beta|_W,
\]
whence $I_\gamma(\Theta(W))\sub \cF(M,P)$
and $I_\gamma|_{\Theta(W)}\colon \Theta(W)\to\cF(M,P)$
is $\K$-analytic.
By the familiar local description of
Lie group structures,
there is a uniquely
determined $\K$-analytic manifold structure
on $\cF(M,G)$ which is modelled on $\cF(M,\cg)$,
turns $\cF(M,G)$ into a $\K$-analytic Lie group,
and such that $\cF(M,U)$ is open in
$\cF(M,G)$ and the latter
induces the given $\K$-analytic manifold
structure thereon
(see Proposition~18 in \cite[Chapter~III, \S1, no.\,9]{Bou},
whose hypothesis that the modelling space be Banach
is not needed in the proof).\\[2.3mm]
By construction, $\Phi:=\cF(M,\phi|_V)\colon \cF(M,V)\to\cF(M,U)$
is a $\K$-analytic diffeomorphism
onto an open identity neighbourhood in the Lie group $\cG:=\cF(M,G)$.
Identifying $T_0\cF(M,V)=\{0\}\times\cF(M,\cg)$
with $\cF(M,G)$ via $(0,v)\mto v$,
we obtain an isomorphism
\[
\alpha:=T_0\Phi\colon \cF(M,\cg)\to T_e\cG
\]
of topological vector spaces.
Let $[.,.]_\cg$ be the Lie bracket on~$\cg$
and $[.,.]$
be the Lie bracket on $\cF(M,\cg)$
making $\alpha$ an isomorphism
of Lie algebras to $L(\cG)$.
Then $[.,.]$ is the pointwise
Lie bracket, i.e.,
\[
[\gamma,\eta](x)=[\gamma(x),\eta(x)]_\cg\quad\mbox{for all
$\gamma,\eta\in\cF(M,\cg)$ and $x\in M$.}
\]
To see this, consider the point evaluations
$\ev_x\colon \cG\to G$, $\gamma\mto \gamma(x)$
and $\ve_x\colon \cF(M,\cg)\to\cg$, $\gamma\mto\gamma(x)$
at $x\in M$. Since $\ev_x\circ \,\Phi=\exp_G\circ \,\ve_x$,
the homomorphism $\ve_x$ is $\K$-analytic on some identity neighbourhood
and hence $\K$-analytic.
We also deduce that
$T_e(\ev_x)\circ T_0\Phi=T_0\exp_G\circ \, T_0\ve_x$
and thus
\begin{equation}\label{alph-inv}
L(\ev_x)\circ\alpha=\ve_x
\end{equation}
with $L(\ev_x):=T_e(\ev_x)$.
As a consequence, $[\gamma,\eta](x)$ equals $\ve_x([\gamma,\eta])=L(\ev_x)([\alpha(\gamma),
\alpha(\eta)])=[L(\ev_x)(\alpha(\gamma)),L(\ev_x)(\alpha(\eta))]_\cg=[\gamma(x),\eta(x)]_\cg$
for all $\gamma,\eta\in\cF(M,\cg)$.
Note that $(L(\ev_x)(\alpha(\gamma)))_{x\in M}=\gamma$ by~(\ref{alph-inv}),
whence the inverse map
$\alpha^{-1}\colon L(\cG)\to \cF(M,\cg)$
is given by $\alpha^{-1}(v)=(L(\ev_x)(v))_{x\in M}$.
We claim:
\[
\cF(M,\exp_G)\circ\alpha^{-1}\colon L(\cG)\to G,\quad v\mto \exp_G\circ \, \alpha^{-1}(v)
\]
\emph{is the exponential function $\exp_\cG$ of~$\cG$.} Since
$\cF(M,\exp_G)$ is a local $\K$-analytic diffeomorphism
at~$0$ (as it coincides with $\Phi$
on some $0$-neighbourhood),
then also $\exp_\cG$ will be a local $\K$-analytic
diffeomorphism at~$0$,
and thus~$\cG$ is a BCH-Lie group.
To prove the claim
%
%
and complete the proof of Proposition~\ref{fi-prop},
let $v\in L(\cG)$
and abbreviate $\gamma:=\alpha^{-1}(v)$.
Then
\[
c\colon \R\to \cG,\quad t\mto \exp_G\circ \hspace*{.3mm}(t\gamma)
\]
is a homomorphism of groups and smooth as
it coincides with the smooth map $t\mto\Phi(t\gamma)$
on some $0$-neighbourhood.
By the preceding,
$\dot{c}(0)=T_0\Phi(\gamma)=\alpha(\gamma)=v$, whence
$\exp_\cG(v)=c(1)=\exp_G\circ \, \alpha^{-1}(v)$.
\section{Sobolev spaces are suitable for Lie theory}\label{sobolev-suitable}
We now show that the theory
discussed in Sections \ref{sec-genF} to \ref{sec-groups}
applies to Sobolev spaces with real
exponents $s>m/2$.
We start with notation and basic facts.
\begin{numba}
For $m\in\N$ and $s\in[0,\infty[$,
let $H^s(\R^m,\R)$ be the real Hilbert space of equivalence classes
$[\gamma]$
modulo functions vanishing almost everywhere
of $\cL^2$-functions $\gamma\colon \R^m\to\R$
(with respect to Lebesgue-Borel measure $\lambda_m$)
such that $y\mto (1+\|y\|_2^2)^{s/2}\,\wh{\gamma}(y)$
is an $\cL^2$-function as well,
where $\wh{\gamma}$ is the Fourier transform
(see \cite[Appendix~B]{IKT};
cf.\ \cite[Chapter 6.A]{Fol}
as well as Section 1.3.1
and Exercise~1.2.5 in~\cite{Gra},
with $p=2$).
Here $\|\cdot\|_2$ is the Euclidean norm
on~$\R^m$.
The scalar product on $H^s(\R^m,\R)$ is given by
\[
\langle[\gamma],[\eta]\rangle_{H^s}
:=\int_{\R^m}(1+\|y\|_2^2)^{s/2}\, \wh{\gamma}(y)\wb{\wh{\eta}(y)}\,d\lambda_m(y)
\]
for $[\gamma],[\eta]\in H^s(\R^m,\R)$.
We let $\|\cdot\|_{H^s}$ be the corresponding norm,
taking $[\gamma]$ to $\sqrt{\langle [\gamma],[\gamma]\rangle_{H^s}}$.
For $s>m/2$, each $[\gamma]\in H^s(\R^m,\R)$
has a unique
bounded, continuous representative
$\gamma$
and we identify the equivalence
class with this representative.
Moreover, the inclusion map
\[
H^s(\R^m,\R)\to BC(\R^m,\R)
\]
is continuous
(see
Lemma~6.5 in~\cite{Fol}
and the subsequent Remark~1 in~\cite{Fol}).
By definition,
$H^s(\R^m,\R)\sub H^t(\R^m,\R)$
for $s\geq t\geq 0$ and
\begin{equation}\label{norm-order}
\|\gamma\|_{H^t}\leq \|\gamma\|_{H^s}
\quad \mbox{for all $\gamma\in H^s(\R^m,\R)$.}
\end{equation}
\end{numba}
\begin{numba}
If $U\sub\R^m$ is a bounded open set
and $s>m/2$,
we define
\[
H^s(U,\R):=\{\gamma|_U\colon \gamma\in H^s(\R^m,\R)\}
\]
and give this space the quotient norm
with respect to the linear surjection
\[
q_U^s\colon H^s(\R^m,\R)\to H^s(U,\R),\quad \gamma\mto\gamma|_U
\]
whose kernel is closed
as the restriction map $BC(\R^m,\R)\to BC(U,\R)$
is continuous linear (with operator norm $\leq 1$).
%
%
The restriction of $q_U^s$ to the orthogonal
complement of $\ker(q^s_U)$ in the Hilbert space $H^s(\R^m,\R)$
is a surjective linear isometry
\[
(\ker(q_U^s))^\perp\to H^s(U,\R),
\]
whose inverse provides an isometric
linear map $\cE^s_U\colon H^s(U,\R)\to H^s(\R^m,\R)$
which is an extension operator:
$\cE^s_U(\gamma)|_U=\gamma$
for all $\gamma\in H^s(U,\R)$.
\end{numba}
\begin{numba}
For a finite-dimensional vector space~$E$,
define $H^s(\R^m,E)$ and $H^s(U,E)$
as in~\ref{topo-funs}.
For $E\cong \R^n$,
the restriction map
$q^{s,E}_U\colon
H^s(\R^m,E)\to H^s(U,E)$, $\gamma\mto\gamma|_U$
then corresponds to $(q^s_U)^n$,
whence it is a quotient map.
For an open subset $V\sub E$,
let $H^s(U,V)$ be the set
of all $\gamma\in H^s(U,E)$
such that $\gamma(U)+Q\sub V$
for some $0$-neighbourhood $Q\sub E$.
Then $H^s(U,V)$ is open
in $H^s(U,E)$, using continuity of
the restriction
map $H^s(U,E)\to BC(U,E)$.
\end{numba}
We shall use the following fact:
\begin{la}\label{nemytskij}
Let $m,n\in \N$, $s\in \;]m/2,\infty[$
and $f\colon \R^m\times \R^n\to \R$ be a bounded
smooth map with bounded partial derivatives
such that
$f(.,0)\in L^2(\R^m,\R)$.
Then $f_*(\gamma):=f\circ(\id_{\R^m},\gamma)\in H^s(\R^m,\R)$
for all $\gamma\in H^s(\R^m,\R^n)$
and the map
$f_*\colon H^s(\R^m,\R^n)\to H^s(\R^m,\R)$
is continuous.
\end{la}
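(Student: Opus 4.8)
The plan is to prove a Moser-type norm estimate for $v:=f_*(\gamma)=f\circ(\id_{\R^m},\gamma)$ and then deduce continuity from it. Write $L$ for a common bound of $f$ and of its first-order partial derivatives, and recall from the discussion above that, since $s>m/2$, the inclusions $H^s(\R^m,\R^n)\to BC(\R^m,\R^n)$ and $H^s(\R^m,\R)\to BC(\R^m,\R)$ are continuous; in particular $\|\gamma\|_\infty\leq C\|\gamma\|_{H^s}$, so it suffices to bound $\|v\|_{H^s}$ in terms of $\|\gamma\|_\infty$ and $\|\gamma\|_{H^s}$. I would first record the low-order data: by the mean value theorem $|v(x)|\leq |f(x,0)|+L\,|\gamma(x)|$ for every $x$, so $v\in L^2(\R^m,\R)$ with $\|v\|_{L^2}\leq\|f(\cdot,0)\|_{L^2}+L\|\gamma\|_{L^2}$, and $v$ is bounded and continuous. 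This disposes of the $L^2$-part of the $H^s$-norm.

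For the fractional part I would treat $0<s<1$ first, via the Aronszajn--Slobodeckij description
\[
\|v\|_{H^s}^2\;\asymp\;\|v\|_{L^2}^2+\iint_{\R^m\times\R^m}\frac{|v(x)-v(y)|^2}{|x-y|^{m+2s}}\,dx\,dy,
\]
splitting the double integral at $|x-y|=1$. On $\{|x-y|\geq 1\}$ the bound $|v(x)-v(y)|^2\leq 2|v(x)|^2+2|v(y)|^2$ together with Tonelli's theorem gives a contribution $\leq C_s\|v\|_{L^2}^2$, where $C_s:=2\int_{|h|\geq1}|h|^{-m-2s}\,dh<\infty$. On $\{|x-y|\leq 1\}$ the mean value theorem applied to $f$ yields the increment bound $|v(x)-v(y)|\leq L\big(|x-y|+|\gamma(x)-\gamma(y)|\big)$; the piece built from $|\gamma(x)-\gamma(y)|$ is controlled by $L^2[\gamma]_{H^s}^2\leq L^2\|\gamma\|_{H^s}^2$, while the piece built from $|x-y|$ carries the pure base-point behaviour of $f$ and must be absorbed using the hypotheses on $f$. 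Collecting the three pieces yields a Moser-type estimate $\|v\|_{H^s}\leq \Lambda(\|\gamma\|_\infty)\,(1+\|\gamma\|_{H^s})$ with $\Lambda$ continuous and non-decreasing.

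To pass to general $s$, write $s=\ell+\sigma$ with $\ell\in\N_0$ and $0\leq\sigma<1$, and use that $H^s(\R^m,\R)$ is a Banach algebra for $s>m/2$ (continuity of pointwise multiplication) together with the Leibniz/Fa\`a~di~Bruno expansion of $\partial^\alpha v$, for $|\alpha|\leq\ell$, into finite sums of products of composition factors $(\partial^\beta f)\circ(\id_{\R^m},\gamma)$ and derivatives $\partial^\delta\gamma$. Each composition factor is handled by the estimate just proved, now applied to the bounded map $\partial^\beta f$ in place of $f$; each $\partial^\delta\gamma$ lies in $H^{s-|\delta|}$; and the algebra together with the Sobolev product estimates combines these factors into a bound on $\|\partial^\alpha v\|_{H^\sigma}$. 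Summing over $|\alpha|\leq\ell$ then gives the Moser estimate for $s$ by induction on $\ell$.

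Finally I would deduce continuity from the estimate. For $\gamma_1,\gamma_2\in H^s(\R^m,\R^n)$ the identity
\[
v_1-v_2=\sum_{i}\Big(\int_0^1 (\partial_{y_i} f)\circ(\id_{\R^m},\gamma_2+t(\gamma_1-\gamma_2))\,dt\Big)\cdot(\gamma_{1,i}-\gamma_{2,i})
\]
expresses the difference as a sum of products in $H^s$ of a superposition operator associated with the bounded map $\partial_{y_i} f$ and the increments $\gamma_{1,i}-\gamma_{2,i}$; the boundedness already established for $\partial_{y_i} f$, applied along the segment, together with continuity of multiplication, shows that $f_*$ is locally Lipschitz, hence continuous. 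The main obstacle is the Moser estimate for $0<s<1$ — specifically, controlling the top-order fractional increments of the composition and ensuring that the first-variable dependence of $f$ does not destroy integrability near the diagonal; this is precisely where the boundedness of the partial derivatives of $f$ and the hypothesis on $f(\cdot,0)$ have to be exploited.
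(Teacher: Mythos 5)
Your plan founders at exactly the step you defer. In the near-diagonal part of the Gagliardo seminorm you bound $|v(x)-v(y)|\le L\bigl(|x-y|+|\gamma(x)-\gamma(y)|\bigr)$ and propose to ``absorb'' the piece built from $|x-y|$ later; but that piece is
\[
L^2\iint_{|x-y|\le 1}\frac{|x-y|^2}{|x-y|^{m+2s}}\,dx\,dy
\;=\;L^2\Bigl(\int_{|h|\le 1}|h|^{2-m-2s}\,dh\Bigr)\int_{\R^m}dx\;=\;+\infty,
\]
since the inner integral is a finite positive constant (finite precisely because $s<1$) which is then integrated over all of $\R^m$. So the right-hand side of your would-be Moser estimate is identically infinite. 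Worse, this step cannot be repaired from the stated hypotheses by any refinement: taking $\gamma=0$, the conclusion of the lemma already asserts $f(\cdot,0)\in H^s(\R^m,\R)$, and boundedness of $f$ and of all its partial derivatives together with $f(\cdot,0)\in L^2$ does \emph{not} imply $f(\cdot,0)\in H^s$. For instance, in dimension $m=1$ take $g(x)=\sum_{j\ge j_0}2^{-j^2}\,\mathrm{Re}\bigl(\phi(x/R_j)e^{i2^jx}\bigr)$ with $\widehat{\phi}$ compactly supported and $R_j:=2^{2j^2-js}$: all derivatives of $g$ are bounded and $g\in L^2$, but the $j$-th packet contributes $\approx 2^{-js}$ to $\|g\|_{L^2}^2$ and $\approx 2^{js}$ to $\|g\|_{H^s}^2$, so $g\notin H^s$; now set $f(x,z):=g(x)$. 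What a correct proof needs is quantitative $H^s$-type ($L^2$-based) control of the $x$-dependence of $f$, not merely pointwise bounds. This is exactly what the paper's proof buys by identifying $H^s(\R^m,\R)$ with the Triebel--Lizorkin space $F^s_{2,2}$ and citing the Nemytskij-operator theorems of Runst and Sickel, whose hypotheses are calibrated to this point (and which do hold in the paper's only application of the lemma, where the relevant $f$ is compactly supported in the $x$-variable).

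The later steps have gaps of the same nature. In the Fa\`a di Bruno induction you handle each factor $(\partial^\beta f)\circ(\id_{\R^m},\gamma)$ ``by the estimate just proved, applied to $\partial^\beta f$ in place of $f$''; but $\partial^\beta f$ does not inherit the hypothesis $(\partial^\beta f)(\cdot,0)\in L^2$, and in any case these factors are merely bounded and non-decaying, so they never lie in $H^\sigma$ --- what is actually needed is that they act as pointwise \emph{multipliers} on the spaces $H^{s-|\delta|}$ containing the products $\prod_i\partial^{\delta_i}\gamma$, spaces which lie below the Banach-algebra threshold $m/2$, so neither the algebra property nor unspecified ``product estimates'' can be quoted as a black box. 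The same confusion invalidates the final continuity argument: the factors $\int_0^1(\partial_{y_i}f)\circ(\id_{\R^m},\gamma_2+t(\gamma_1-\gamma_2))\,dt$ are bounded functions (identically $1$ if $\partial_{y_i}f\equiv 1$), not elements of $H^s$, so $v_1-v_2$ is not ``a sum of products in $H^s$.'' In short, your outline reproduces the shape of a Moser-type argument but leaves unproved --- and, under the hypotheses as stated, unprovable --- exactly the multiplier-type facts that constitute the content of the theorems the paper cites.
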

\begin{proof}
It is known that $H^s(\R^m,\R)$
coincides with the Triebel-Lizorkin space
$F^s_{2,2}$ (see (vii) in the proposition
stated on \cite[p.\,14]{RuS}).
Thus, the assertion
follows from Theorem~1
on p.\,387 and Theorem~2 on p.\,389 in~\cite{RuS}.
\end{proof}
\begin{prop}
For $m\in\N$ and $s>m/2$,
the Sobolev spaces $H^s(U,\R)$
on bounded open subsets $U\sub\R^m$
form a family of function spaces which is suitable
for Lie theory.
\end{prop}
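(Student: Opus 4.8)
The relevant good collection here is the family $\cU$ of all bounded open subsets of $\R^m$ (Example~\ref{exa1}\,(a)). Each $H^s(U,\R)$ is a Hilbert space, hence Fr\'echet and integral complete, and the inclusion $H^s(U,\R)\to BC(U,\R)$ is continuous by construction; so it remains to verify the axioms (PF), (PB), (GL), and (MU). Two reductions organize the work. By Remark~\ref{only-scalar} it is enough to treat (PF) for the target $F:=\R$ and (PB), (GL), (MU) for the modelling space $E:=\R$; and by Remark~\ref{auto-cont}, since all spaces occurring are Hilbert (hence Fr\'echet), the continuity of the \emph{linear} maps in (PB), (GL), and (MU) is automatic once the corresponding set-theoretic containments are shown. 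Thus the only genuinely nonlinear assertion, and the main obstacle, is the continuity of the pushforward superposition operator in (PF); for the other three axioms only membership in the appropriate Sobolev space must be checked.

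For (PF) I fix $U,V\in\cU$ with $V$ relatively compact in $U$ and a smooth map $f\colon U\times E\to\R$, and prove continuity of $f_*$ at an arbitrary $\gamma_0\in H^s(U,E)$, which suffices. Let $\cE^{s,E}_U\colon H^s(U,E)\to H^s(\R^m,E)$ be the componentwise isometric extension operator, so that $\cE^{s,E}_U(\gamma)|_U=\gamma$ and $\|\cE^{s,E}_U(\gamma)\|_\infty\le c\,\|\gamma\|_{H^s(U,E)}$ for the embedding constant~$c$ of $H^s(\R^m,E)\to BC(\R^m,E)$. On the unit ball $\Omega$ around $\gamma_0$ the extensions are then uniformly bounded, say $\cE^{s,E}_U(\gamma)(\R^m)\sub B^E_R(0)$ for all $\gamma\in\Omega$. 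I choose cutoffs $\chi\in C^\infty_c(U,\R)$ with $\chi\equiv 1$ on a neighbourhood of $\wb V$, and $\psi\in C^\infty_c(E,\R)$ with $\psi\equiv 1$ on $B^E_R(0)$, and set $\wt f(x,y):=\chi(x)\psi(y)f(x,y)$ for $x\in U$ and $\wt f(x,y):=0$ otherwise. Then $\wt f\colon \R^m\times E\to\R$ is smooth with compact support, hence bounded with bounded partial derivatives, and $\wt f(\cdot,0)\in L^2(\R^m,\R)$; so Lemma~\ref{nemytskij} shows that $\wt f_*\colon H^s(\R^m,E)\to H^s(\R^m,\R)$ is defined and continuous. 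For $\gamma\in\Omega$ and $x\in V$ one has $\chi(x)=1$ and $\psi(\gamma(x))=1$ (as $\gamma(x)=\cE^{s,E}_U(\gamma)(x)\in B^E_R(0)$), whence $\wt f_*(\cE^{s,E}_U(\gamma))|_V=f\circ(\id_V,\gamma|_V)=f_*(\gamma)$. Therefore $f_*(\gamma)=\bigl(\wt f_*(\cE^{s,E}_U(\gamma))\bigr)|_V\in H^s(V,\R)$, and on $\Omega$ the map $f_*$ is a composition of the continuous maps $\cE^{s,E}_U$, $\wt f_*$, and the restriction $H^s(\R^m,\R)\to H^s(V,\R)$. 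Hence $f_*$ is continuous at $\gamma_0$, establishing (PF). I expect the double cutoff---in the space variable, to pass from $U$ to $\R^m$, and in the value variable, to meet the boundedness hypotheses of Lemma~\ref{nemytskij}---to be the crux of the argument.

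For (MU) and (GL) the key input is the standard fact that multiplication by a function in $C^\infty_c(\R^m,\R)$ is a bounded operator on $H^s(\R^m,\R)$. In (MU), given $U\in\cU$ and $h\in C^\infty_c(U,\R)$, any $\gamma\in H^s(U,\R)$ is $\delta|_U$ for some $\delta\in H^s(\R^m,\R)$, and with $\wt h$ the extension of $h$ by $0$ we get $h\gamma=(\wt h\,\delta)|_U\in H^s(U,\R)$. In (GL), for a compact $K\sub V$ I pick $\chi\in C^\infty_c(V,\R)$ with $\chi\equiv 1$ on a neighbourhood of $K$; then for $\gamma\in H^s(V,\R)$ with $\Supp(\gamma)\sub K$ the zero-extension $\wt\gamma$ equals $(\chi\,\cE^s_V(\gamma))|_U$, so $\wt\gamma\in H^s(U,\R)$. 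In both cases the continuity of the linear maps follows from Remark~\ref{auto-cont}.

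Finally, (PB) rests on the diffeomorphism invariance of Sobolev spaces. For $\gamma\in H^s(V,\R)$, writing $\gamma=\delta|_V$ with $\delta\in H^s(\R^m,\R)$, the containment $\gamma\circ\Theta|_W=(\delta\circ\Theta)|_W\in H^s(W,\R)$ I would deduce either from the invariance of $H^s_{\loc}$ under $C^\infty$ coordinate changes, or, more explicitly, by extending $\Theta$ on a relatively compact neighbourhood of $\wb W$ to a $C^\infty$-diffeomorphism $\Psi$ of $\R^m$ that equals the identity outside a compact set and invoking the invariance of $H^s(\R^m,\R)$ under such diffeomorphisms (cf.\ \cite{RuS}); then $(\delta\circ\Psi)|_W=\gamma\circ\Theta|_W$ lies in $H^s(W,\R)$, and continuity is again automatic by Remark~\ref{auto-cont}. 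This, together with the two cited external facts about $H^s$ (the Nemytskij estimate of Lemma~\ref{nemytskij} for (PF) and diffeomorphism invariance for (PB)), completes the verification that $(H^s(U,\R))_{U\in\cU}$ is suitable for Lie theory.
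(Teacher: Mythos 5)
Your proof is correct and takes essentially the same route as the paper's: the double cutoff in the space and value variables combined with the Runst--Sickel Nemytskij estimate (Lemma~\ref{nemytskij}) for (PF), the $C^\infty_c$-multiplier fact for (GL) and (MU), diffeomorphism invariance of $H^s_{\loc}$ for (PB), and Remark~\ref{auto-cont} for all the linear continuity assertions; the only variation is in (PF), where you transfer continuity through the isometric extension operator $\cE^{s,E}_U$ on a norm ball with a uniform sup bound, while the paper factors through the quotient map $q^{s,E}_U$ and works on the neighbourhood $H^s(U,W)$ of maps with values in a relatively compact set --- two sides of the same coin. One caveat: your ``more explicit'' alternative for (PB), extending $\Theta$ near $\wb{W}$ to a $C^\infty$-diffeomorphism of $\R^m$ equal to the identity outside a compact set, is impossible in general (e.g., an orientation-reversing $\Theta$ in dimension one admits no such extension, since diffeomorphisms of $\R$ are monotone), so you should rely on the $H^s_{\loc}$-invariance argument, which is exactly what the paper does.
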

\begin{proof}
Axiom~(PF).
Let $U\sub \R^m$ be a bounded open subset
and $V\sub U$ be a relatively compact open subset.
Let $E$ be a finite-dimensional
vector space and $f\colon U\times E\to \R$
be a smooth map. Given $\gamma\in H^s(U,E)$,
we have $\wb{\gamma(U)}\sub W$ for
a relatively compact open set $W\sub E$.
Let $\xi\colon U\to \R$ and $\chi\colon E\to\R$
be compactly supported smooth functions such that
$\xi|_V=1$ and $\chi|_W=1$.
We get a function
$g\in C^\infty_c(\R^m\times E,\R)$ via $g(x,y):=\xi(x)\chi(y)f(x,y)$
for $(x,y)\in U\times E$,
$g(x,y):=0$ for $(x,y)\in (\R^m\setminus \Supp(\xi))\times E$.
Lemma~\ref{nemytskij}
shows that $g_*(\eta):=g\circ (\id_{\R^m},\eta)\in H^s(\R^m,\R)$
for each $\eta\in H^s(\R^m,E)$ and $g_*\colon H^s(\R^m,E)\to H^s(\R^m,\R)$
is continuous. Now
$h(\eta|_U):=g\circ (\id_V,\eta|_V)=g_*(\eta)|_V\in H^s(V,\R)$
is well defined.
Since $h\circ q^{s,E}_U=q^s_U\circ g_*$
is continuous, $h\colon H^s(U,E)\to H^s(V,\R)$ is continuous.
For each $\eta$ in the open $\gamma$-neighbourhood
$H^s(U,W)$, we have $f_*(\eta):=f\circ (\id_V,\eta|_V)=h(\eta)$.
Notably, $f_*(\gamma)\in H^s(V,\R)$ and $f_*\colon H^s(U,E)\to H^s(V,\R)$
is continuous at~$\gamma$.\\[1mm]
Axiom~(PB).
Given an open subset $U\sub\R^m$, let
$H^s_{\loc}(U,\R)$ be the set
of all functions $\gamma\colon U\to\R$
such that $\gamma|_Q\in H^s(Q,\R)$
for each relatively compact
open subset $Q\sub U$.
Let $V$ and $W$ be bounded open subsets of~$\R^m$ such that $\wb{W}\sub U$.
Let $\Theta\colon U\to V$ be a $C^\infty$-diffeomorphism.
If $\gamma\in H^s(V,\R)$,
then $\gamma\in H^s_{\loc}(V,\R)$
in particular, whence
$\gamma\circ\Theta\in H^s_{\loc}(U,\R)$
by \cite[Corollary 6.25]{Fol}
and hence $H^s(\Theta|_W,\R)(\gamma):=\gamma\circ\Theta|_W\in H^s(W,\R)$.
By Remark~\ref{auto-cont}, $H^s(\Theta|_W,\R)\colon H^s(V,\R)\to H^s(W,\R)$
is continuous.\\[1mm]
Axiom (GL).
Let $U$ and $V$ be bounded open subsets in $\R^m$
such that $\wb{V}\sub U$, and $K\sub V$ be compact.
If $\gamma\in H^s(V,\R)$ with $\Supp(\gamma)\sub K$,
there exists $\eta\in H^s(\R^m,\R)$ with $\eta|_U=\gamma$.
Define $\wt{\gamma}(x):=\gamma(x)$ for $x\in V$,
$\wt{\gamma}(x):=0$ for $x\in U\setminus \Supp(\gamma)$.
Let $\xi\in C^\infty_c(\R^m,\R)$
such that $\xi|_K=1$ and $\Supp(\xi)\sub V$.
Then $\xi\eta\in H^s(\R^m,\R)$
by \cite[Proposition~6.12]{Fol}
and $\wt{\gamma}=(\xi\eta)|_U\in H^s(U,\R)$.
By Remark~\ref{auto-cont},
the map $H^s_K(V,\R)\to H^s(U,\R)$, $\gamma\mto\wt{\gamma}$
is continuous.\\[1mm]
Axiom~(MU).
If $U\sub\R^m$ is a bounded open subset
and $h\in C^\infty_c(U,\R)$,
let $\wt{h}\in C^\infty_c(\R^m,\R)$
be the extension of~$h$ by~$0$.
If $\gamma\in H^s(U,\R)$,
let $\eta\in H^s(\R^m,\R)$
with $\eta|_U=\gamma$.
By \cite[Proposition 6.12]{Fol},
$\wt{h}\gamma\in H^s(\R^m,\R)$,
whence $m_h(\gamma):=h\gamma=(\wt{h}\eta)|_U\in H^s(U,\R)$.
By Remark~\ref{auto-cont},
$m_h$ is continuous.
\end{proof}
\section{The Lie groups {\boldmath$H^{>s_0}(M,G)$}}
We begin with preparations for the proof of Theorem~\ref{dirlim-1}.
\begin{la}\label{global-compact}
Let $m\in\N$,
$\cU$ be a good collection
of open subsets of $\R^m$
and $(\cF_j(U,\R))_{U\in\cU}$
be a families of Banach spaces
which are suitable for Lie\linebreak
theory,
for $j\in \{1,2\}$.
Assume that $\cF_1(U,\R)\sub\cF_2(U,\R)$
for each $U\in\cU$,
and the inclusion map
$\cF_1(U,\R)\to\cF_2(U,\R)$
is continuous.
Then we have:
\begin{itemize}
\item[\rm(a)]
$\cF_1(M,N)\sub\cF_2(M,N)$ holds
for each compact $m$-dimensional
smooth manifold~$M$ and finite-dimensional
smooth manifold~$N$.
Moreover,
the inclusion map $\cF_1(M,E)\to\cF_2(M,E)$
is continuous for each finite-dimensional vector
space~$E$.
\item[\rm(b)]
If the mappings
$\kappa_{V,U}\colon
\cF_1(U,\R)\to\cF_2(V,\R)$, $\gamma\mto\gamma|_V$
are compact operators for all $U,V\in\cU$
such that $V$ is relatively compact in~$U$,
then the inclusion mappings
$\kappa^E_M\colon \cF_1(M,E)\to\cF_2(M,E)$
are compact operators
for all finite-dimensional vector spaces~$E$
and all~$M$ as in~{\rm(a)}.
\end{itemize}
\end{la}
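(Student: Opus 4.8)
The plan is to reduce both assertions to the finite chart description from Proposition~\ref{fin-many}. Throughout, fix charts $\phi_j\colon U_j\to V_j\sub\R^m$ of~$M$ and sets $W_j\in\cU$ relatively compact in~$V_j$ with $M=\bigcup_{j=1}^k\phi_j^{-1}(W_j)$. Applying the isomorphism~(\ref{an-iso}) componentwise, the given continuous inclusion $\cF_1(U,\R)\to\cF_2(U,\R)$ yields a continuous inclusion $\cF_1(U,E)\to\cF_2(U,E)$ for every finite-dimensional~$E$ and $U\in\cU$. For~(a), the containment $\cF_1(M,N)\sub\cF_2(M,N)$ is then immediate from Definition~\ref{f-to-N}, since any local representative $\psi\circ\gamma\circ\phi^{-1}\in\cF_1(V_\phi,\R^n)$ also lies in $\cF_2(V_\phi,\R^n)$; in particular $\kappa^E_M\colon\cF_1(M,E)\to\cF_2(M,E)$ is defined. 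Its continuity I would verify using that $\cF_2(M,E)$ carries the initial topology with respect to the maps $\cF_2(\phi_j^{-1}|_{W_j},E)$: each composite $\cF_2(\phi_j^{-1}|_{W_j},E)\circ\kappa^E_M$ sends $\gamma$ to $\gamma\circ\phi_j^{-1}|_{W_j}$ in $\cF_2(W_j,E)$, hence factors as the continuous restriction $\cF_1(M,E)\to\cF_1(W_j,E)$ followed by the continuous inclusion $\cF_1(W_j,E)\to\cF_2(W_j,E)$.

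For~(b), the idea is to factor $\kappa^E_M$ through a product of the hypothesised compact restriction operators and then pull compactness back across an embedding. Using Definition~\ref{good-coll}(c), enlarge each $W_j$ to some $Y_j\in\cU$ with $\wb{W_j}\sub Y_j$ and $Y_j$ relatively compact in~$V_j$. Since $M=\bigcup_j\phi_j^{-1}(W_j)$, the pairs $(\phi_j,Y_j)$ again satisfy the hypotheses of Proposition~\ref{fin-many}, giving a topological embedding $\Theta_1\colon\cF_1(M,E)\to\prod_{j=1}^k\cF_1(Y_j,E)$; write $\Theta_2\colon\cF_2(M,E)\to\prod_{j=1}^k\cF_2(W_j,E)$ for the embedding associated with the original pairs $(\phi_j,W_j)$. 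As $W_j$ is relatively compact in~$Y_j$, the hypothesis provides compact operators $\kappa_{W_j,Y_j}\colon\cF_1(Y_j,\R)\to\cF_2(W_j,\R)$, and their $E$-valued versions $\kappa^E_{W_j,Y_j}$ are compact, being finite direct sums of compact operators; hence $R:=\prod_{j=1}^k\kappa^E_{W_j,Y_j}$ is a compact operator between the Banach spaces $\prod_j\cF_1(Y_j,E)$ and $\prod_j\cF_2(W_j,E)$.

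Evaluating both sides on a function shows $\Theta_2\circ\kappa^E_M=R\circ\Theta_1$, as each sends $\gamma$ to $(\gamma\circ\phi_j^{-1}|_{W_j})_j$ (using $W_j\sub Y_j$); thus $\Theta_2\circ\kappa^E_M$ is compact. To deduce compactness of $\kappa^E_M$ itself, let $B\sub\cF_1(M,E)$ be bounded. Then $\Theta_2(\kappa^E_M(B))=R(\Theta_1(B))$ is relatively compact, and its closure~$C$ lies in $\im(\Theta_2)$ because the latter is closed (Proposition~\ref{fin-many}). Since $\Theta_2$ is a topological embedding, $(\Theta_2)^{-1}(C)$ is compact and contains $\kappa^E_M(B)$, so $\kappa^E_M(B)$ is relatively compact. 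The Banach-space structure invoked here (so that ``compact operator'' is meaningful and finite products of compact operators are compact) is guaranteed by the corollary following Proposition~\ref{fin-many}.

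The step I expect to be the main obstacle is precisely this final transfer in~(b): compactness of the composite $\Theta_2\circ\kappa^E_M$ does not by itself yield compactness of $\kappa^E_M$ for a merely continuous $\Theta_2$, and the argument hinges on $\Theta_2$ being a topological embedding with closed image, so that relatively compact subsets of $\cF_2(M,E)$ are detected by their images in $\prod_j\cF_2(W_j,E)$. The rest is bookkeeping---choosing the nested pairs $(W_j,Y_j)$ so that the restriction square commutes, and matching the $E$-valued operators with their scalar counterparts via~(\ref{an-iso}).
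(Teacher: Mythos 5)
Your proof is correct and follows essentially the same route as the paper's: part (b) in particular uses the identical factorization $\Theta_2\circ\kappa^E_M = R\circ\Theta_1$ through the embeddings of Proposition~\ref{fin-many} (with nested sets $\wb{W_j}\sub Y_j$ obtained from the good-collection axioms) and pulls compactness back across the closed embedding $\Theta_2$, a step the paper asserts tersely and you rightly spell out. The only cosmetic difference is in (a), where you argue directly from Definition~\ref{f-to-N} and the initial topology of Proposition~\ref{fin-many} instead of passing through $\cF_{\loc}$ and Lemma~\ref{charact_F(M,E)} as the paper does; both are valid.
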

\begin{proof}
(a) The hypothesis implies that $(\cF_1)_{\loc}(U,E)\sub(\cF_2)_{\loc}(U,E)$
for each open subset $U\sub\R^m$ and finite-dimensional vector
space~$E$, with continuous linear
inclusion map.
The first assertion follows (using Lemma~\ref{charact_F(M,E)})
and also the second assertion.\\[1mm]
(b)
If $E\cong\R^n$,
then
$\kappa^E_{V,U}\colon \cF_1(U,E)\to\cF_2(V,E)$, $\gamma\mto\gamma|_V$
corresponds to $(\kappa_{V,E})^n$
for all $U,V\in\cU$ with $V$ relatively compact in~$U$,
whence $\kappa^E_{V,E}$ is a compact operator.
There exist $k\in\N$ and charts $\phi_i\colon U_i\to V_i\sub\R^m$
of~$M$ for $i\in\{1,\ldots, k\}$
and sets $W_{i,2}\in \cU$ which are relatively compact in~$V_i$
such that $M=\bigcup_{i=1}^n\phi_i^{-1}(W_{i,2})$.
By Definition~\ref{good-coll}\,(b),
we find $W_{i,1}\in \cU$ such that $W_{i,1}$
is relatively compact in $V_i$
and $\wb{W_{i,2}}\sub W_{i,1}$.
By Proposition~\ref{fin-many}, the map
\[
\Theta_j\colon \cF_j(M,E)\to\prod_{i=1}^k\cF_j(W_{i,j},E),\quad\gamma\mto
(\gamma\circ\phi_i^{-1}|_{W_{i,j}})_{i=1}^k
\]
is a linear topological embedding with
closed image for $j\in\{1,2\}$.
Now $h:=\prod_{i=1}^k\kappa^E_{W_{i,2},W_{i,1}}\colon
\prod_{i=1}^k\cF_1(W_{i,1},E)\to\prod_{i=1}^k\cF_2(W_{i,2},E)$
is a compact operator.
Since $\Theta_2\circ \kappa^E_M= h \circ \Theta_1$
is a compact operator, so is $\kappa^E_M$.
\end{proof}
Following \cite[p.\,200]{Fol},
given $s>m/2$
and a bounded open subset $U\sub\R^m$,
we let $\dot{H}^s(U,\R)$
be the closure of $\{\gamma\in C^\infty_c(\R^m,\R)\colon \Supp(\gamma)\sub U\}$
in $H^s(\R^m,\R)$.
\begin{la}
For each bounded, open subset $U\sub\R^m$
and relatively compact, open subset
$V\sub U$, the continuous linear map
$q^s_{V,U}\colon
\dot{H}^s(U,\R)\to H^s(V,\R)$, $\eta\mto\eta|_V$
is surjective and hence a quotient map.
\end{la}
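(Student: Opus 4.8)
The plan is to establish surjectivity of $q^s_{V,U}$ directly; the claim that it is then a quotient map follows from the open mapping theorem. Indeed, $\dot{H}^s(U,\R)$ is a closed subspace of the Hilbert space $H^s(\R^m,\R)$ and hence Banach, while $H^s(V,\R)$ is Banach as a quotient of a Hilbert space by a closed subspace; a continuous linear surjection between Banach spaces is open, so it is a quotient map. Thus the whole proof reduces to surjectivity.

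To prove surjectivity, I would start with an arbitrary $\gamma\in H^s(V,\R)$. By definition of $H^s(V,\R)$, there is a lift $\wt{\gamma}\in H^s(\R^m,\R)$ with $\wt{\gamma}|_V=\gamma$. Since $V$ is relatively compact in~$U$, the closure $\wb{V}$ is a compact subset of the open set~$U$, so I can choose a cutoff function $\chi\in C^\infty_c(\R^m,\R)$ with $\Supp(\chi)\sub U$ and $\chi\equiv 1$ on a neighbourhood of~$\wb{V}$. Set $\eta:=\chi\wt{\gamma}$. By \cite[Proposition~6.12]{Fol}, multiplication by the compactly supported smooth function~$\chi$ preserves $H^s(\R^m,\R)$, so $\eta\in H^s(\R^m,\R)$; moreover $\Supp(\eta)\sub\Supp(\chi)\sub U$ is compact and $\eta|_V=\chi|_V\,\wt{\gamma}|_V=\gamma$.

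It remains to check the crucial point that $\eta\in\dot{H}^s(U,\R)$, i.e.\ that~$\eta$ lies in the $H^s$-closure of $\{f\in C^\infty_c(\R^m,\R)\colon\Supp(f)\sub U\}$. I would do this by mollification: fix $\rho\in C^\infty_c(\R^m,\R)$ with $\rho\geq 0$, $\int\rho\,d\lambda_m=1$, supported in the unit ball, and put $\rho_\ve(x):=\ve^{-m}\rho(x/\ve)$. Then $\eta*\rho_\ve\in C^\infty_c(\R^m,\R)$ with support in $\Supp(\eta)+\wb{B^{\R^m}_\ve(0)}$. As $\Supp(\eta)$ is a compact subset of the open set~$U$, it has positive distance to $\R^m\setminus U$, so $\Supp(\eta*\rho_\ve)\sub U$ for all sufficiently small $\ve>0$. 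Finally $\eta*\rho_\ve\to\eta$ in $H^s(\R^m,\R)$: on the Fourier side $\wh{\eta*\rho_\ve}=\wh{\eta}\cdot\wh{\rho_\ve}$ with $\wh{\rho_\ve}(y)=\wh{\rho}(\ve y)\to 1$ pointwise and $|\wh{\rho_\ve}|\leq\|\rho\|_1=1$, so dominated convergence applied to $\int_{\R^m}(1+\|y\|_2^2)^s|\wh{\eta}(y)|^2|\wh{\rho_\ve}(y)-1|^2\,d\lambda_m(y)$ forces the norm difference to~$0$. Hence $\eta\in\dot{H}^s(U,\R)$ and $q^s_{V,U}(\eta)=\eta|_V=\gamma$, which proves surjectivity.

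The main obstacle is precisely this last step: the product $\chi\wt{\gamma}$ is only an $H^s$-function, not smooth, so membership in $\dot{H}^s(U,\R)$ is not immediate from the construction and must be secured by the mollification argument above, the delicate point being to keep the supports of the approximants inside~$U$ while retaining $H^s$-convergence. Once surjectivity is in hand, the quotient-map statement is a formal consequence of the open mapping theorem as explained above.
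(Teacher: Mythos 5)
Your proof is correct, and its skeleton coincides with the paper's: lift $\gamma$ to some $\eta\in H^s(\R^m,\R)$, multiply by a cutoff $\chi\in C^\infty_c(\R^m,\R)$ with $\Supp(\chi)\sub U$ and $\chi=1$ near $\wb{V}$, verify that the product lies in $\dot{H}^s(U,\R)$, and finish with the Open Mapping Theorem. The difference is in how that crucial membership is secured. The paper approximates first and cuts off second: it cites density of $C^\infty_c(\R^m,\R)$ in $H^s(\R^m,\R)$ (through the Schwartz space, \cite[p.\,192]{Fol} and \cite[Theorem~7.10\,(a)]{Rud}), picks $\eta_n\to\eta$ with $\eta_n\in C^\infty_c(\R^m,\R)$, and uses continuity of multiplication by the cutoff (\cite[Proposition~6.12]{Fol}) to get $\xi\eta_n\to\xi\eta$ in $H^s(\R^m,\R)$; the approximants $\xi\eta_n$ then have support in $\Supp(\xi)\sub U$ automatically, so no support bookkeeping is needed. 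You cut off first and mollify second, producing the approximating sequence $\eta*\rho_\ve$ by convolution and proving $H^s$-convergence by an explicit dominated-convergence argument on the Fourier side; the price is the support-control step (taking $\ve$ below the distance from $\Supp(\eta)$ to $\R^m\setminus U$), which you handle correctly. The paper's route buys brevity, delegating both density and convergence to the literature; your route buys self-containedness, in effect reproving the needed density statement from scratch, with \cite[Proposition~6.12]{Fol} invoked only once (for membership of $\chi\wt{\gamma}$ in $H^s(\R^m,\R)$ rather than for continuity of the multiplication operator). The reduction to surjectivity via the Open Mapping Theorem is identical in both arguments.
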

\begin{proof}
Being a restriction of $q^s_V$, the map $q^s_{V,U}$
is continuous and linear. To see surjectivity,
let $\gamma\in H^s(V,\R)$.
There is $\eta\in H^s(\R^m,\R)$
such that $\eta|_V=\gamma$.
Now the Schwartz space $\cS(\R^m,\R)$
of rapidly decreasing smooth
functions is dense in $H^s(\R^m,\R)$
(cf.\ \cite[p.\,192]{Fol})
and the inclusion mapping\linebreak
$\cS(\R^m,\R)\to H^s(\R^m,\R)$
is continuous, whence $C^\infty_c(\R^m,\R)$
is dense in\linebreak
$H^s(\R^m,\R)$
(being dense in $\cS(\R^m,\R)$ by \cite[Theorem~7.10\,(a)]{Rud}).
Thus, we find a sequence $(\eta_n)_{n\in\N}$
in $C^\infty_c(\R^m,\R)$
such that $\eta_n\to\eta$ in $H^s(\R^m,\R)$
as $n\to\infty$.
Let $\xi\in C^\infty_c(\R^m,\R)$
such that $\Supp(\xi)\sub U$
and $\xi|_V=1$.
Then $\xi\eta_n\to\xi\eta$ in $H^s(\R^m,\R)$
(see \cite[Proposition~6.12]{Fol}),
whence $\xi\eta\in\dot{H}^s(U,\R)$.
Moreover, $(\xi\eta)|_V=\eta|_V=\gamma$.
By the Open Mapping Theorem,
the continuous linear surjection $q^s_{V,U}$ is an open map
and hence a quotient map.
\end{proof}
We recall a known fact.
\begin{la}\label{rettich}
Let $m\in\N$
and
$s>t>m/2$.
For all bounded open subsets $U$
in~$\R^m$,
we have $H^s(U,\R)\sub H^t(U,\R)$.
The inclusion map $\theta_U$ is a compact operator.
\end{la}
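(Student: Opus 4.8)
The plan is to treat the two assertions separately. For the inclusion $H^s(U,\R)\sub H^t(U,\R)$ I would argue directly from the definitions: by \eqref{norm-order} we have $H^s(\R^m,\R)\sub H^t(\R^m,\R)$ with $\|\gamma\|_{H^t}\le\|\gamma\|_{H^s}$, so every $\gamma\in H^s(U,\R)$, being the restriction to~$U$ of some element of $H^s(\R^m,\R)$, is also the restriction of an element of $H^t(\R^m,\R)$ and hence lies in $H^t(U,\R)$. For the continuity of $\theta_U$ one compares the two quotient norms: any $H^s$-extension of $\gamma$ is in particular an $H^t$-extension with smaller $H^t$-norm, so taking the infimum over $H^s$-extensions gives $\|\theta_U(\gamma)\|_{H^t(U,\R)}\le\|\gamma\|_{H^s(U,\R)}$, i.e.\ $\theta_U$ has operator norm $\le 1$.

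For compactness the key idea is to factor $\theta_U$ so that the noncompact ``spreading at infinity'' of the full-space embedding is cut off by the boundedness of~$U$. First I would fix $\chi\in C^\infty_c(\R^m,\R)$ with $\chi\equiv 1$ on the compact set~$\wb{U}$, and set $K:=\Supp(\chi)$. Using the isometric extension operator $\cE^s_U\colon H^s(U,\R)\to H^s(\R^m,\R)$ together with multiplication by~$\chi$ (continuous on $H^s(\R^m,\R)$ by \cite[Proposition~6.12]{Fol}), the assignment $\gamma\mto\chi\cdot\cE^s_U(\gamma)$ is a continuous linear map from $H^s(U,\R)$ into the closed subspace $H^s_K(\R^m,\R):=\{f\in H^s(\R^m,\R)\colon\Supp(f)\sub K\}$. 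Since $\chi\equiv 1$ on~$U$, restricting such a function back to~$U$ recovers~$\gamma$; thus $\theta_U$ is the composite of this map, the inclusion $H^s_K(\R^m,\R)\hookrightarrow H^t(\R^m,\R)$, and the continuous restriction $q^t_U\colon H^t(\R^m,\R)\to H^t(U,\R)$.

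The hard part will be showing that the middle factor, the inclusion $H^s_K(\R^m,\R)\hookrightarrow H^t(\R^m,\R)$ of $H^s$-functions supported in the fixed compact set~$K$, is a compact operator; granting this, $\theta_U$ is a composite of continuous maps with one compact factor and is therefore compact. This is the classical Rellich--Kondrachov lemma, which I would either cite or prove through the Fourier transform: if $(f_n)$ is bounded in $H^s_K(\R^m,\R)$, then each $\wh{f_n}$ is smooth, and differentiating under the integral (using $\Supp(f_n)\sub K$) one bounds $\wh{f_n}$ and its gradient uniformly in terms of $\|f_n\|_{H^s}$ and~$K$, so by Arzel\`a--Ascoli a subsequence converges uniformly on every ball $\{\|y\|_2\le R\}$. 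One then estimates $\|f_n-f_{n'}\|_{H^t}^2=\int_{\R^m}(1+\|y\|_2^2)^t|\wh{f_n}(y)-\wh{f_{n'}}(y)|^2\,d\lambda_m(y)$ by splitting at $\|y\|_2=R$: the low-frequency part is small by uniform convergence, while the high-frequency part is controlled by $(1+R^2)^{t-s}\|f_n-f_{n'}\|_{H^s}^2$, which tends to~$0$ as $R\to\infty$ because $t<s$. Hence the subsequence is Cauchy in $H^t$, giving the required compactness.
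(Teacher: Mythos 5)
Your proof is correct, but it transfers compactness to $H^s(U,\R)$ by a genuinely different mechanism than the paper. The paper fixes a bounded open set $W\supseteq\wb{U}$, cites Rellich's theorem (Folland, Theorem~6.14) for compactness of the inclusion $h\colon \dot{H}^s(W,\R)\to\dot{H}^t(W,\R)$ of the spaces of functions supported in $\wb{W}$, and then pushes compactness \emph{down through the quotient maps} $q^s_{U,W}$ of the preceding lemma: from $\theta_U\circ q^s_{U,W}=q^t_{U,W}\circ h$ and the openness of the quotient map (the image of a bounded open $0$-neighbourhood is again a bounded $0$-neighbourhood), compactness of $\theta_U$ follows. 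You instead \emph{lift} each function by the continuous linear map $\gamma\mto\chi\cdot\cE^s_U(\gamma)$ into $H^s_K(\R^m,\R)$, writing $\theta_U$ as a composite whose middle factor is the compact inclusion $H^s_K(\R^m,\R)\hookrightarrow H^t(\R^m,\R)$; compactness is then immediate, with no need for the quotient-map argument nor for the paper's preceding lemma on surjectivity of $q^s_{V,U}$ (which itself required density of $C^\infty_c(\R^m,\R)$ in $H^s(\R^m,\R)$). The price is that you must know multiplication by $\chi$ is continuous on $H^s(\R^m,\R)$ (Folland, Proposition~6.12, which the paper uses elsewhere anyway) and that the extension operator $\cE^s_U$ exists — but the paper has already constructed it. Your optional self-contained proof of the Rellich-type compactness (Fourier transform, uniform bounds on $\wh{f_n}$ and $\nabla\wh{f_n}$ from the fixed compact support, Arzel\`a--Ascoli, and splitting the $H^t$-norm at radius $R$ using $(1+R^2)^{t-s}\to 0$) is also sound and makes the lemma independent of the literature, whereas the paper simply cites Folland; both are legitimate choices, the paper's being shorter and yours more elementary and self-contained.
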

\begin{proof}
Let $W$ be a bounded open subset of $\R^m$ with $\wb{U}\sub W$.
By Rellich's Theorem \cite[Theorem 6.14]{Fol},
the inclusion map
$h\colon \dot{H}^s(W,\R)\to\dot{H}^t(W,\R)$
is a compact operator.
Then $\theta_U\circ q^s_{U,W}=q^t_{U,W}\circ h$
is a compact operator
and hence continuous, whence $\theta_U$
is continuous. If $B\sub \dot{H}^s(W,\R)$
is a bounded open $0$-neighbourhood,
then $q^s_{U,W}(B)$ is a bounded $0$-neighbourhood in $H^s(U,\R)$.
As $\theta_U(q^s_{U,W}(B))$
is relatively compact,
$\theta_U$
is a compact operator.
\end{proof}
\begin{numba}
A locally convex space $E$ is called a \emph{Silva space} (or (DFS)-space)
if $E$ is the locally convex direct limit
of some Banach spaces $E_1\sub E_2\sub\cdots$,
such that all inclusion maps
$E_j\to E_{j+1}$
are compact operators
(see, e.g., \cite{Flo} or \cite[Appendix~B.13]{GaN}).
Every Silva space is complete.
It is \emph{compact regular} in the sense
that each compact subset $K\sub E$ is a compact subset
of some~$E_j$.
The locally convex topology~$\cO$ on~$E$
then also makes~$E$ the direct limit of the $E_n$
as a topological space (see the cited works).
Thus, a subset $U\sub E$ is open if and only if
$U\cap E_j$ is open in~$E_j$ for each $j\in\N$.
\end{numba}
\begin{la}\label{superpo-dl}
Let $m\in\N$,
$\cU$ be a good collection of open
subsets of~$\R^m$ and $(\cF_j(U,\R))_{U\in\cU}$
be a family of Banach spaces which is suitable
for Lie theory, for each $j\in\N$.
For all $j\in\N$ and $U\in\cU$,
assume that $\cF_j(U,\R)\sub \cF_{j+1}(U,\R)$
with continuous inclusion map.
For all $j\in \N$ and $U,V\in\cU$
with $V$ relatively compact in~$U$,
assume that the map
$\cF_j(U,\R)\to\cF_{j+1}(V,\R)$,
$\gamma\mto \gamma|_V$ is a compact operator.
Then the following holds:
\begin{itemize}
\item[\rm(a)]
For each finite-dimensional vector space~$E$,
the locally convex direct limit topology
makes $\cF(M,E):=\bigcup_{j\in\N}\cF_j(M,E)$
a Silva space.
\item[\rm(b)]
If $E$ and $F$ are finite-dimensional
vector spaces, $U\sub E$ is open and
$\Phi\colon U\to F$ a $C^\infty$-map,
then
$\cF(M,U):=\{\gamma\in\cF(M,E)\colon \gamma(M)\sub U\}$
is an open subset of $\cF(M,E)$
and the map
\[
\cF(M,\Phi)\colon \cF(M,U)\to\cF(M,F),\quad
\gamma\mto\Phi\circ\gamma
\]
is smooth.
If $E$ and $F$ are $\K$-vector spaces for $\K\in\{\R,\C\}$
and $\Phi$ is $\K$-analytic, then also $\cF(M,\Phi)$
is $\K$-analytic.
\end{itemize}
\end{la}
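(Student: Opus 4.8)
The plan is to deduce~(a) from the machinery already developed and then to obtain~(b) by testing everything on the Banach steps of the direct limit. To prove~(a), I would first observe that each $\cF_j(M,E)$ is a Banach space. This follows from the Corollary to Proposition~\ref{fin-many}: since the families $(\cF_j(U,\R))_{U\in\cU}$ consist of Banach spaces, each space $\cF_j(W,E)$ attached to a chart domain is Banach, and the cited corollary then makes $\cF_j(M,E)$ a Banach space. By Lemma~\ref{global-compact}(a) the inclusion $\cF_j(M,E)\to\cF_{j+1}(M,E)$ is continuous and linear; invoking the compactness hypothesis on the restriction maps $\cF_j(U,\R)\to\cF_{j+1}(V,\R)$, Lemma~\ref{global-compact}(b) upgrades this inclusion to a compact operator. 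Thus $\cF(M,E)=\bigcup_{j\in\N}\cF_j(M,E)$, with the locally convex direct limit topology, is a direct limit of a sequence of Banach spaces with compact linking maps, i.e.\ a Silva space by definition. In particular it is compact regular and carries the associated topological (final) direct limit topology, so that each step $\cF_j(M,E)$ embeds topologically.

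For~(b), openness of $\cF(M,U)$ is immediate. Indeed $\cF(M,U)=\bigcup_{j}\cF_j(M,U)$, so $\cF(M,U)\cap\cF_j(M,E)=\cF_j(M,U)$, which is open in $\cF_j(M,E)$ by the openness statement established earlier (the continuity of the inclusion $\cF_j(M,E)\to C(M,E)$). Since a subset of a Silva space meeting each step in an open set is itself open, $\cF(M,U)$ is open. For smoothness I would prove, by induction on $k\in\N_0$, the more general assertion that $\cF(M,\Psi)$ is $C^k$ for all finite-dimensional $E'$, $F'$, every open $W\sub E'$ and every smooth $\Psi\colon W\to F'$. Throughout I use that finite products commute with these direct limits, so that $\cF(M,E_1\times E_2)\cong\cF(M,E_1)\times\cF(M,E_2)$ as in Lemma~\ref{in-product} (the product of two Silva spaces being the Silva space of the product sequence), and likewise that $\cF(M,E_\C)=\cF(M,E)_\C$.

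The base case is continuity: $\cF(M,\Psi)$ maps the step $\cF_j(M,W)$ into $\cF_j(M,F')$ via $\cF_j(M,\Psi)$, which is continuous by Corollary~\ref{left_comp_smooth_Manifold} applied to the family $\cF_j$; as the domain is open in a Silva space whose topology is the topological direct limit, stepwise continuity yields continuity of $\cF(M,\Psi)$. For the step from $k$ to $k+1$, I compute directional derivatives: given $\gamma\in\cF(M,W)$ and $\eta\in\cF(M,E')$, choose $j$ with $\gamma\in\cF_j(M,W)$ and $\eta\in\cF_j(M,E')$; smoothness of $\cF_j(M,\Psi)$ makes the difference quotients converge in $\cF_j(M,F')$, and since this step embeds topologically into $\cF(M,F')$ the limit persists there. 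Under the product identification this gives
\[
d\cF(M,\Psi)(\gamma,\eta)=d\Psi\circ(\gamma,\eta)=\cF(M,d\Psi)(\gamma,\eta).
\]
As $d\Psi\colon W\times E'\to F'$ is smooth, $\cF(M,d\Psi)$ is $C^{k}$ by the inductive hypothesis, so $\cF(M,\Psi)$ is $C^{k+1}$. Hence $\cF(M,\Psi)$, and in particular $\cF(M,\Phi)$, is smooth.

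The $\K$-analyticity follows exactly as in Corollary~\ref{superpo-cx}. For $\K=\C$ the derivative of the (now $C^\infty_\R$) map $\cF(M,\Phi)$ at $\gamma$ is $\eta\mapsto\Phi'(\gamma(\cdot))\,\eta(\cdot)$, which is $\C$-linear because $\Phi$ is $\C$-analytic; \ref{real-to-c} then gives $\C$-analyticity. For $\K=\R$ I extend $\Phi$ to a $\C$-analytic map $g\colon V\to F_\C$ on an open $V\sub E_\C$ with $U\sub V$; then $\cF(M,g)$ is a $\C$-analytic extension of $\cF(M,\Phi)$, defined on the open set $\cF(M,V)\supseteq\cF(M,U)$ in $\cF(M,E_\C)=\cF(M,E)_\C$ and valued in $\cF(M,F_\C)=\cF(M,F)_\C$, so $\cF(M,\Phi)$ is $\R$-analytic. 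The main obstacle throughout is the passage from the Banach steps to the direct limit in the smoothness argument, and it is resolved by the two features of Silva spaces just used: compact regularity, which lets limits of difference quotients computed in $\cF_j(M,F')$ survive in $\cF(M,F')$, and the final-topology property, which reduces continuity of the derivative to its restrictions to the steps. Both reductions rely on $\cF(M,\Phi)$ being level-preserving (sending step $j$ into step $j$), which is precisely what makes testing smoothness on the Banach pieces legitimate.
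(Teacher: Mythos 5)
Your proposal is correct in substance. Part~(a) and the openness of $\cF(M,U)$ coincide with the paper's argument (the paper, too, gets the Silva property by combining Lemma~\ref{global-compact} with the compactness hypothesis; your explicit remark that each step $\cF_j(M,E)$ is Banach, via the corollary to Proposition~\ref{fin-many}, is a detail the paper leaves implicit). Where you genuinely diverge is the smoothness in part~(b): the paper settles it in one line by noting that $\cF(M,\Phi)|_{\cF_j(M,U)}=\Lambda_j^F\circ\cF_j(M,\Phi)$ is smooth for each $j$ (Corollary~\ref{left_comp_smooth_Manifold}) and then invoking the external result \cite[Lemma~9.7]{COM}, which says that a map on an open subset of such a direct limit is $C^\infty$ as soon as its restrictions to the steps are; the $\C$-analytic case is handled the same way via Corollary~\ref{superpo-cx}, and the $\R$-analytic case by complexification exactly as you do. You instead reprove the needed direct-limit principle by hand in this concrete situation: induction on the order of differentiability, using that $\cF(M,\Phi)$ preserves the steps, that an open subset of a Silva space carries the final topology with respect to its traces on the steps (for continuity), that difference quotients can be computed inside a single Banach step (for existence of directional derivatives), and that finite products and complexifications commute with these countable direct limits (to identify $d\,\cF(M,\Phi)=\cF(M,d\Phi)$ and to run \ref{real-to-c}). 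This buys self-containedness -- no appeal to \cite{COM}, whose analytic variant you also avoid by deducing $\C$-analyticity from smoothness plus $\C$-linearity of derivatives -- at the price of re-deriving a special case of that lemma; the paper's route is shorter, and the cited lemma is more general in that it does not require the map to send step $j$ into step $j$.

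One repair is needed in your justifications, though it does not affect the logical structure. Twice you assert that a step $\cF_j(M,E)$ (resp.\ $\cF_j(M,F')$) \emph{embeds topologically} into the Silva limit, and in your closing paragraph you attribute the survival of limits of difference quotients to compact regularity. The embedding claim is false whenever the steps are infinite dimensional: Silva spaces are semi-Montel, so bounded sets are totally bounded; if the limit topology induced the norm topology on an infinite-dimensional Banach step, the unit ball of that step would be norm-totally bounded, contradicting Riesz's theorem. Fortunately, everywhere you use these claims, all that is needed is that the inclusion $\cF_j(M,F')\to\cF(M,F')$ is continuous and injective: a net convergent in the step then converges in the limit space to the same element, which is exactly what your difference-quotient argument requires. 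With ``embeds topologically'' weakened to ``includes continuously'', your proof stands.
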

\begin{proof}
(a) By Lemma~\ref{global-compact},
$\cF_j(M,E)\sub\cF_{j+1}(M,E)$
and the inclusion map is a compact operator.\\[1mm]
(b) $\cF(M,U)$ is open in the Silva space $\cF(M,E)$
as $\cF(M,U)\cap \cF_j(M,E)=\cF_j(M,U)$ is open in $\cF_j(M,E)$
for each $j\in\N$. The inclusion mapping\linebreak
$\Lambda_j^F\colon \cF_j(M,F)\to\cF(M,F)$
is continuous and linear. Since $\cF(M,\Phi)|_{\cF_j(M,U)}=\Lambda^F_j\circ\cF_j(M,\Phi)$
is smooth for each $j\in \N$ by Corollary~\ref{left_comp_smooth_Manifold},
also $\cF(M,\Phi)$
is smooth (see \cite[Lemma~9.7]{COM}).
The complex analytic case follows
in the same way, using Corollary~\ref{superpo-cx}.
If $\Phi$ is real analytic, pick
a complex analytic extension $\Psi\colon V\to F_\C$
of $\Phi$, defined on an open subset $V\sub E_\C$.
Then $\cF(M,\Psi)$ is a complex
analytic extension for $\cF(M,\Phi)$.
\end{proof}
Before we can prove Theorem~\ref{dirlim-1},
we recall further terminology.
\begin{numba}\label{defn-reg-gp}
Let $G$ be a Lie group with neutral element~$e$
and Lie algebra $\cg:=T_eG$.
Let $G\times TG\to TG$, $(g,v)\mto g.v$
be the left action of $G$ on its tangent bundle
given by $g.v:=TL_g(v)$,
where $L_g\colon G\to G$, $x\mto gx$.
Given $k\in\N_0\cup\{\infty\}$,
endow $C^k([0,1],\cg)$
with the topology of uniform convergence
of $C^k$-functions $\gamma\colon [0,1]\to\cg$
and their derivatives up to $k$th order.
The Lie group~$G$ is called
\emph{$C^k$-regular} if, for each $\gamma\in C^k([0,1],\cg)$,
the initial value problem
\begin{equation}\label{inival}
\dot{\eta}(t)=\eta(t).\gamma(t),\quad \eta(0)=e
\end{equation}
has a (necessarily unique) solution
$\eta\colon [0,1]\to G$ and the evolution map
$C^k([0,1],\cg)\to G$,
$\gamma\mto \eta(1)$
is smooth (see \cite{SEM}).
Every $C^k$-regular Lie group is $C^\infty$-regular,
a concept going back to~\cite{Mil}
(for sequentially complete~$\cg$).\\[2.3mm]
We shall also encounter
$L^\infty_{\rc}$-regularity of Lie groups modelled
on sequentially complete locally convex spaces,
a more specialized property introduced in~\cite{MEA}
(see 1.13 and Definition~5.16 in loc.\,cit.)
We shall not repeat the concept here
but recall that $L^\infty_{\rc}$-regularity
implies $C^0$-regularity (cf.\ \cite[Corollary 5.21]{MEA}).
\end{numba}
{\bf Proof of Theorem~\ref{dirlim-1}.}
\emph{The modelling space.}
We pick $s_1>s_2>\cdots$ in $\,]s_0,\infty[$
such that $s_j\to s_0$ as $j\to\infty$.
For each $j\in \N$, we have $H^{s_j}(M,\cg)\sub H^{s_{j+1}}(M,\cg)$
and the inclusion map is a compact operator,
as a consequence of Lemmas~\ref{global-compact}
and \ref{rettich}.
Thus, the locally convex direct limit topology makes
\[
H^{>s_0}(M,\cg)=\bigcup_{j\in\N}H^{s_j}(M,\cg)=\dl\,H^{s_j}(M,\cg)
\]
a Silva space. Note that $\,]s_0,\infty[$
is a directed set for the opposite of the
usual order. As $(s_j)_{j\in\N}$
is a cofinal subsequence of the latter set,
we have
\[
\dl_{s>s_0}H^s(M,\cg)=\dl\, H^{s_j}(M,\cg)
\]
in a standard way. The same argument allows
$]s_0,\infty[$ to be replaced with $\{s_j\colon j\in\N\}$
in the direct limit properties described in
Theorem~\ref{dirlim-1}.\\[2.3mm]
\emph{The group.}
By Lemma~\ref{global-compact}\,(a), $H^{s_j}(M,G)$ is a subgroup
of $H^{s_{j+1}}(M,G)$
for each $j\in\N$.
We give
$H^{>s_0}(M,G)=\bigcup_{j\in\N}H^{s_j}(M,G)$
the unique group structure
making $H^{s_j}(M,G)$ a subgroup
for each $j\in\N$.\\[2.3mm]
\emph{The map $H^{>s_0}(M,\exp_G)$.}
For each $\gamma\in H^{>s_0}(M,\cg)$,
we have $\gamma\in H^{s_j}(M,\cg)$
for some $j\in \N$ and hence
$H^{>s_0}(M,\exp_G)(\gamma):=\exp_G\circ \, \gamma= H^{s_j}(M,\exp_G)(\gamma)
\in H^{s_j}(M,G)\sub H^{>s_0}(M,G)$,
using Lemma~\ref{PF-between}.\\[2.3mm]
\emph{The adjoint action on $H^{>s_0}(M,\cg)$.}
If $\gamma\in H^{>s_0}(M,G)$,
then $\gamma\in  H^{s_j}(M,G)$ for some $j\in\N$.
For all $i\geq j$, we then have
$\gamma\in H^{s_i}(M,G)$
and the proof of Proposition~\ref{fi-prop} shows that
\[
\beta_i\colon H^{s_i}(M,\cg)\to H^{s_i}(M,\cg),\quad
\eta\mto \Ad\circ(\gamma,\eta)
\]
is a continuous linear map (where
$\Ad\colon G\times\cg\to\cg$ is the adjoint action).
Then also the linear map
\begin{equation}\label{new-Ad}
\beta\colon H^{>s_0}(M,\cg)\to H^{>s_0}(M,\cg),\quad\eta\mto \Ad\circ(\gamma,\eta)
\end{equation}
is continuous, as $\beta=\dl_{i\geq j}\beta_i$.\\[2.3mm]
\emph{The Lie group structure.}
We already saw that $H^{>s_0}(M,G)$
is a group under pointwise operations and that
$\exp_G\circ\,\gamma\in H^{>s_0}(M,G)$
for all $\gamma\in H^{>s_0}(M,\cg)$.
To construct
the Lie group structure on $H^{>s_0}(M,G)$,
replace $\cF$ with $H^{>s_0}$
in the remaining steps of the proof of Proposition~\ref{fi-prop}
and make the following changes:
We use Lemma~\ref{superpo-dl}
in place of Corollary~\ref{superpo-cx};
we use the continuity
of~$\beta$ in~(\ref{new-Ad})
just established
instead of Lemma~\ref{for-Ad}.\\[2.3mm]
As a result, $\cG:=H^{>s_0}(M,G)$
is a $\K$-analytic BCH-Lie group
modelled on $H^{>s_0}(M,\cg)$.
For each $x\in M$,
the point evaluation $\ev_x\colon\cG\to G$
is a $\K$-analytic group homomorphism and
\[
\alpha^{-1}\colon L(\cG)\to H^{>s_0}(M,\cg),\quad
v\mto (L(\ev_x)(v))_{x\in M}
\]
is an isomorphism of topological Lie algebras
if we endow $H^{>s_0}(M,\cg)$ with the pointwise
Lie bracket. Moreover,
$H^{>s_0}(M,\exp_G)\circ \alpha^{-1}$
is the exponential function of~$\cG$.\\[2.3mm]
\emph{Existence of a direct limit chart.}
With $P$, $Q$, $U$, $V$, $\phi$
as in the preceding adaptation of the proof of Proposition~\ref{fi-prop},
the map
\[
\Phi:=H^{>s_0}(M,\phi|_V)\colon H^{>s_0}(M,V)
\to H^{>s_0}(M,U)
\]
is a $\K$-analytic diffeomorphism and
$\Phi^{-1}$ is a chart for $H^{>s_0}(M,G)$
whose restriction to $H^{>s_0}(M,U)\cap H^{s_j}(M,G)=H^{s_j}(M,U)$
is the chart
\[
H^{s_j}(M,\phi^{-1}|_U)\colon H^{s_j}(M,U)\to H^{s_j}(M,V),\quad \gamma\mto \phi^{-1}|_U
\circ
\gamma
\]
of the Lie group $H^{s_j}(M,G)$ around~$e$.
Thus $\Phi^{-1}$
is a strict direct limit chart for $H^{>s_0}(M,G)=\bigcup_{j\in\N}H^{s_j}(M,G)$
around~$e$ as in \cite[Definition~2.1]{COM}.\\[2.3mm]
\emph{Regularity.}
Since $H^{>s_0}(M,\cg)=\dl\, H^{s_j}(M,\cg)$
is a Silva space and thus compact regular,
the Lie group $H^{>s_0}(M,G)=\bigcup_{j\in\N}H^{s_j}(M,G)$
is $L^\infty_{\rc}$-regular by
\cite[Proposition~8.10]{MEA}
and hence $C^0$-regular.\\[2.3mm]
\emph{Direct limit properties.}
Since $\cG:=H^{>s_0}(M,G)=\bigcup_{j\in\N}H^{s_j}(M,G)$
has a direct limit chart and
$L(\cG)\cong H^{>s_0}(M,\cg)=\dl\, H^{s_j}(M,\cg)$
is a Silva space,
\cite[Proposition 9.8\,(i)]{COM}
shows that $\cG=\dl\,H^{s_j}(M,G)$
as a topological group, $C^\infty_\bL$-Lie group for $\bL\in\{\R,\K\}$,
and as a $C^r_\bL$-manifold for all $r\in\N_0\cup\{\infty\}$.\\[2.3mm]
\emph{Compact subsets.}
Since $\cG$ has a direct limit chart
and $L(\cG)\colon H^{>s_0}(M,\cg)=\bigcup_{j\in\N}H^{s_j}(M,\cg)$
is compact regular, each compact subset $K$
of~$\cG$ is a compact subset of $H^{s_j}(M,G)$
for some $j\in\N$,
by \cite[Lemma~6.1]{HGS}.
\appendix
\section{Bounded open sets with smooth boundary}\label{appA}
Let $m\in\N$. We show that the set $\cU$ of all bounded, open subsets
$U\sub\R^m$ with smooth boundary
is a good collection of open subsets of~$\R^m$.
\begin{defn}\label{smoo-bd}
A compact subset $L\sub\R^m$
is called a \emph{compact subset with smooth boundary}
if, for each $x\in \partial L$,
there exists a $C^\infty$-function $g\colon Q\to\R$
on an open $x$-neighbourhood $Q\sub\R^m$
such that $\nabla g(y)\not=0$
for all $y\in Q$ and
\[
L\cap Q=\{y\in Q\colon g(y)\leq 0\}.
\]
We say that a bounded open subset $U\sub \R^m$
has smooth boundary
if $\wb{U}$ is a compact subset of~$\R^m$
with smooth boundary and $U=\wb{U}^{\,0}$.
\end{defn}
\begin{rem}\label{aequi}
For $x\in\partial L$
and $g$ as in Definition~\ref{smoo-bd},
after a permutation of the coordinates we
may assume that $\frac{\partial g}{\partial x_n}(x)\not=0$.
After shrinking $Q$, we may
assume that 
$\frac{\partial g}{\partial x_n}(y)> 0$
for all $y\in Q$ (which we assume now)
or $\frac{\partial g}{\partial x_n}(y)<0$
for all $y\in Q$ (an analogous case).
Shrinking $Q$ further,
we may assume that $Q=W\times J$
for an open set $W\sub\R^{m-1}$ and an open
interval $J\sub\R$
and that
\[
\{y\in Q\colon g(y)=0\}=\graph(h)
\]
for a smooth function $h\colon W \to J$,
by the Implicit Function Theorem.
Then
\[
Q\cap L=\{(w,t)\in W\times J\colon t\leq h(w)\}
\]
by monotonicity of $g(w,\cdot)$ on~$J$.
Notably, $\{(w,t)\in W\times J\colon t<h(w)\}\sub L^0$
is dense in $Q\cap L$, whence
$L^0$ is dense in~$L$. Moreover, $Q\cap\partial L=\graph(h)$.
\end{rem}
It is easy to see that $\cU$
satisfies the conditions~(a) and~(d)
formulated in Definition~\ref{good-coll}.
To see that~(b) holds,
let $U\sub\R^m$ be a bounded open
subset with smooth boundary
and $K\sub U$ be a non-empty
compact subset. Thus $L:=\wb{U}$
is a compact subset of~$\R^m$ with smooth boundary
and $U=L^0$.
Then $\partial L$
is a compact smooth submanifold of $\R^m$
and we consider the inner normal vector field
\[
\nu\colon \partial L\to \R^m
\]
given for $y\in Q\cap \partial L$ (with $Q$ as in Definition~\ref{smoo-bd}) by
\[
\nu(y)=-\frac{1}{\|\nabla g(y)\|_2}\nabla g(y).
\]
Thus $\nu(y)$ is the unique unit vector in $(T_y(\partial L))^\perp$
such that $y+t\nu(y)\in L$ for all small $t\geq 0$.
The hypotheses~(d) of \cite[Theorem~1.10]{SMO}
being satisfied, its conclusion~(i)
provides a smooth vector field
$F\colon \R^m\to\R^m$ with $F|_{\partial L}=\nu$.
Using a smooth partition of unity,
we can create a compactly supported
smooth function
$\xi\colon\R^m\to\R$
such that $\xi|_{\partial L}=1$
and $\Supp(\xi)\sub \R^m\setminus K$.
After replacing $F$ with $\xi F$,
we may assume that $F$ has compact support
and $\Supp(F)\cap K=\emptyset$.
For each $y\in \R^m$, the maximal solution $\phi_y$
of the initial value problem
\[
x'(t)=F(x(t)),\quad x(0)=y
\]
is defined for all $t\in\R$.
We now use a standard fact
concerning flows of complete vector fields:
Setting $\Fl_t(y):=\phi_y(t)$ for $y\in\R^m$,
we get $C^\infty$-diffeomorphisms
$\Fl_t\colon\R^m\to\R^m$
for all $t\in\R$. 
If $x\in\partial L$,
let $g\colon Q\to\R$ be as in Definition~\ref{smoo-bd}.
There is $\ve>0$ such that
$\phi_x(]{-\ve},\ve[)\sub Q$.
Since
\[
(g\circ \phi_x)'(0)=\langle \nabla g(\phi_x(0)),
\phi_x'(0)\rangle=\langle \nabla g(x),\nu(x)\rangle
=-\|\nabla g(x)\|_2<0,
\]
after shrinking $\ve$, we can achieve that
$(g\circ\phi_x)'(t)<0$ for all $t\in\;]{-\ve},\ve[$.
Thus $g(\phi_x(t))<0$ (and hence $\phi_x(t)\in L^0=U$)
for all $t\in \;]0,\ve[$,
while $g(\phi_x(t))>0$ (and hence $\phi_x(t)\in\R^m\setminus L$)
for all $t\in \;]{-\ve},0[$.\\[2.3mm]
We now show that, for each $y\in L$,
we have
\begin{equation}\label{get-inside}
\phi_y(t)\in L^0\quad\mbox{for all $t>0$.}
\end{equation}
If this was wrong, we could define
\begin{equation}\label{is-inf}
\tau:=\inf\{t>0\colon \phi_y(t)\not\in L^0\}.
\end{equation}
Then $\tau>0$, as we just observed that
$\phi_y(t)\in L^0$ for small $t>0$
if $y\in \partial L$;
the corresponding statement for
$y\in L^0$ also holds as $\phi_y^{-1}(L^0)$
is an open $0$-neighbourhood in this case.
Since $\phi_y$ is continuous
and $\R^m\setminus L^0$ is closed,
we have $\phi_y(\tau)\in\R^m\setminus L^0$.
On the other hand, $\phi_y(t)\to \phi_y(\tau)$
as $[0,\tau[\;\ni t\to \tau$,
whence $\phi_y(\tau)\in L$ and hence
$x:=\phi_y(\tau)\in L\setminus L^0=\partial L$.
But then $\phi_y(\tau-t)=\phi_x(-t)\in \R^m\setminus L$
for all small $t>0$, contradicting~(\ref{is-inf}).\\[2.3mm]
Fix a real number $t_0>0$. Then $\Fl_{t_0}\colon\R^m\to\R^m$
is a $C^\infty$-diffeomorphism,
whence $\Fl_{t_0}(L)$ is a compact subset of~$\R^m$
with smooth boundary and $V:=\Fl_{t_0}(U)=(\Fl_{t_0}(L))^0$
a bounded open subset of~$\R^m$
with smooth boundary.
By (\ref{get-inside}),
we have $\wb{V}=\Fl_{t_0}(L)\sub L^0=U$.
Note that $P:=L^0\setminus \Supp(F)$
is an open subset of~$\R^m$ such that $K\sub P$.
Since $F(x)=0$ for all $x\in P$,
we have $\phi_x(t)=x$ for all $t\in\R$
and hence $\Fl_{t_0}(x)=x$.
Thus $K\sub P=\Fl_{t_0}(P)\sub \Fl_{t_0}(U)=V$.\\[1mm]
To get~(c), let $O$ be an open subset of $\R^m$
and $U\not=\emptyset$ be a relatively compact subset of~$O$
such that $U\in\cU$.
We construct a relatively compact subset $W$ of~$O$
such that $\wb{U}\sub W$ and $W\in \cU$.
Let
$F$ and $\Fl_t$
be as in the proof of~(b),
applied with a singleton $K\sub U$.
It is a standard fact that the map
\[
\R\times \R^m\to\R^m,\quad (t,y)\mto\Fl_t(y)
\]
is smooth and hence continuous.
Thus $S:=\{(t,y)\in\R\times \R^m\colon \Fl_t(y)\in O\}$
is open in $\R\times \R^m$.
Since $\Fl_0=\id_{\R^m}$,
we have $\{0\}\times \wb{U}\sub S$.
Using the Wallace Theorem (see Theorem~12 in \cite[Chapter~5]{Kel}),
we find an open $0$-neighbourhood $J\sub \R$
and an open subset $Y\sub\R^m$ with $\wb{U}\sub Y$ such that
$J\times Y\sub S$. We pick $t_0\in J$ such that $t_0<0$.
Then $W:=\Fl_{t_0}(U)$ is a bounded open subset of~$\R^m$
with smooth boundary.
Since $U\supseteq \Fl_{-t_0}(\wb{U})$,
\[
\wb{U}=\Fl_{t_0}(\Fl_{-t_0}(\wb{U}))\subseteq \Fl_{t_0}(U)=W
\]
follows. Moreover,
$\wb{W}=\Fl_{t_0}(\wb{U})\sub \Fl_{t_0}(Y)\sub O$. $\square$\\[2.3mm]
The preceding proof varies the discussion of flows
of inner vector fields on manifolds with corners
in \cite[\S2.7]{Mic}.
{\bf Helge  Gl\"{o}ckner}, Institut f\"{u}r Mathematik, Universit\"at Paderborn,\\
Warburger Str.\ 100, 33098 Paderborn, Germany; {\tt  glockner@math.upb.de}\\[2.5mm]
{\bf Luis T\'{a}rrega},
Universitat Jaume I, Departamento de Matem\'{a}ticas,
Campus de Riu Sec, 12071 Castell\'{o}n, Spain;
{\tt ltarrega@uji.es}\vfill

\begin{thebibliography}{99}
%
%
\bibitem{Bas}
Bastiani, A.,
\emph{Applications diff\'{e}rentiables et vari\'{e}t\'{e}s diff\'{e}rentiables de dimension infinie},
J. Anal.\ Math.\ {\bf 13} (1964), 1--114.
%
%
\bibitem{BGN}
Bertram, W., H. Gl\"{o}ckner, and K.-H. Neeb,
\emph{Differential calculus over general base fields and rings},
Expo.\ Math.\ {\bf 22} (2004), 213--282.
%
%
\bibitem{BaS}
Bochnak, J. and J. Siciak,
\emph{Analytic functions in topological vector spaces},
Stud.\ Math.\ {\bf 39} (1971), 77--112.
%
%
\bibitem{Bou}
Bourbaki, N.,
``Lie Groups and Lie Algebras, Chapters 1--3,''
Springer,
Berlin, 1989.
%
%
\bibitem{Eel}
Eells, J. Jr.,
\emph{A setting for global analysis},
Bull.\ Amer.\ Math.\ Soc.\ {\bf 72} (1966), 751--807.
%
%
\bibitem{Flo}
Floret, K.,
\emph{Lokalkonvexe Sequenzen mit kompakten Abbildungen},
J. Reine Angew.\ Math.\ {\bf 247} (1971), 155--195.
%
%
\bibitem{Fol}
Folland, G.\,B.,
``Introduction to Partial Differential Equations,''
2nd Edition, Princeton University Press,
Princeton, 1995.
%
%
\bibitem{Fre}
Freed, D.\,S.,
\emph{The geometry of loop groups},
J. Differential Geom.\ {\bf 28} (1988),
223--276.
%
%
\bibitem{FaU}
Freed, D.\,S.
and K.\,K. Uhlenbeck, ``Instantons and Four-Manifolds,''
Springer, New York, 1984.
%
%
\bibitem{RES}
Gl\"{o}ckner, H.,
\emph{Infinite-dimensional Lie groups without completeness restrictions},
pp.\ 43--59 in:
A. Strasburger, J. Hilgert, K.-H. Neeb,
and W. Woyjty\'{n}ski (eds.),
``Geometry and Analysis on Finite- and Infinite-Dimensional Lie Groups,''
Banach Center Publ.\ {\bf 55}, Warsaw, 2002.
%
%
\bibitem{GCX}
Gl\"{o}ckner, H.,
\emph{Lie group structures on quotient groups and universal complexifications for infinite-dimensional Lie groups},
J. Funct.\ Anal.\ {\bf 194} (2002), 347--409.
%
%
\bibitem{DIR}
Gl\"{o}ckner, H.,
\emph{Direct limit Lie groups and manifolds},
J. Math.\ Kyoto Univ.\ {\bf 43} (2003),
2--26.
%
%
\bibitem{COM}
Gl\"{o}ckner, H.,
\emph{Direct limits of infinite-dimensional Lie groups compared to direct limits in related categories},
J. Funct.\ Anal.\ {\bf 245} (2007), 19--61. 
%
%
\bibitem{HGS}
Gl\"{o}ckner, H.,
\emph{Direct limits of infinite-dimensional Lie groups},
pp.\ 243--280 in: K.-H. Neeb and A. Pianzola (eds.), ``Developments and Trends in Infinite-Dimensional Lie Theory,'' Birkh\"{a}user, Basel,
2011.
%
%
\bibitem{SEM}
Gl\"{o}ckner, H.,
\emph{Regularity properties of infinite-dimensional Lie groups,
and semiregularity},
preprint, arXiv:1208.0715.
%
%
\bibitem{MEA}
Gl\"{o}ckner, H.,
\emph{Measurable regularity properties of infinite-dimensional Lie groups},
preprint, arXiv:1601.02568.
%
%
\bibitem{SMO}
Gl\"{o}ckner, H.,
\emph{Smoothing operators for vector-valued functions and extension operators},
preprint, arXiv:2006.00254.
%
%
\bibitem{FUR}
Gl\"{o}ckner, H.,
\emph{Direct limits of mapping groups},
in preparation.
%
%
\bibitem{GaN}
Gl\"{o}ckner, H. and K.-H. Neeb,
``Infinite Dimensional Lie Groups,''
book in preparation.
%
%
\bibitem{Gra}
Grafakos, L.,
``Modern Fourier Analysis,''
3rd Edition, Springer, New York, 2014.
%
%
\bibitem{Ham}
Hamilton, R.\,S.,
\emph{The inverse function theorem of Nash and Moser},
Bull.\ Amer.\ Math.\ Soc.\ {\bf 7} (1982),
65--222. 
%
%
\bibitem{HaM}
Hekmati, P. and J. Mickelsson,
\emph{Fractional loop group and twisted $K$-theory},
Commun.\ Math.\ Phys.\ {\bf 299} (2010),
741--763.
%
%
\bibitem{IKT}
Inci, H., T. Kappeler, and P. Topalov,
``On the Regularity of the Composition of Diffeomorphisms,''
Mem.\ Amer.\ Math.\ Soc.\ {\bf 226} (2013), no.\ 1062.
%
%
\bibitem{Kel}
Kelley, J.\,L.,
``General Topology,''
Springer, New York,
1975.
%
%
\bibitem{KaM} Kriegl, A. and P.\,W. Michor,
``The Convenient Setting of Global Analysis,''
AMS, Providence, 1997.
%
%
\bibitem{Mic} Michor, P.\,W., ``Manifolds of Differentiable Mappings,''
Shiva Publ., Orpington, 1980.
%
%
\bibitem{Mck}
Mickelsson, J.,
``Current Algebras and Groups,''
Plenum Press, New York, 1989.
%
%
\bibitem{Mil} Milnor, J., \emph{Remarks on infinite-dimensional Lie groups},
pp.\,1007--1057 in: B.\,S. DeWitt and R. Stora (eds.),
``Relativit\'{e}, groupes et topologie~II,'' North-Holland,
Amsterdam, 1984.
%
%
\bibitem{Nee}
Neeb, K.-H.,
\emph{Towards a Lie theory of locally convex groups},
Jpn.\ J. Math.\ {\bf 1} (2006), 291--468.
%
%
\bibitem{Om2}
Omori, H.,
``Infinite-Dimensional Lie Groups,''
Amer.\ Math.\ Soc., 1997.
%
%
\bibitem{Pal}
Palais, R.\,S.,
``Foundations of Global Non-Linear Analysis,''
W.\,A. Benjamin, New York, 1968.
%
%
\bibitem{Pic}
Pickrell, D.,
\emph{Heat kernel measures at critical limits},
pp.\ 393--415 in: K.-H. Neeb and A. Pianzola (eds.), ``Developments and Trends in Infinite-Dimensional Lie Theory,'' Birkh\"{a}user, Basel,
2011.
%
%
\bibitem{PaS}
Pressley, A. and G. Segal, ``Loop Groups,''
Oxford University Press, New York, 1986.
%
%
\bibitem{Rud}
Rudin, W., ``Functional Analysis,''
McGraw Hill, 1991.
%
%
\bibitem{RuS}
Runst, T. and W. Sickel,
``Sobolev Spaces of Fractional Order, Nemytskij Operators,
and Nonlinear Partial Differential Equations,''
De Gruyter, Berlin, 1996. 
%
%
\bibitem{Wei}
von Weizs\"{a}cker, H.,
\emph{In which spaces is every curve Lebesgue-Pettis integrable}?
preprint, arXiv:1207.6034.\vspace{1mm}
%
\end{thebibliography}
\end{document}